\theoremstyle{plain}
\newtheorem{defn}{Definition}
\newtheorem{theorem}{Theorem}
\newtheorem{lemma}{Lemma}
\newtheorem{pro}{Proposition}
\newtheorem{cor}{Corollary}
\newtheorem{example}{Example}
\newtheorem{remark}{Remark}
\renewcommand{\vec}[1]{\mbox{\boldmath$#1$}}
\newcommand{\B}{\ensuremath{\mathcal{B}}}
\newcommand{\M}{\ensuremath{\mathcal{M}}}
\newcommand{\vol}{\ensuremath{\mathrm{vol}}}
\newcommand{\diag}{\ensuremath{\mathrm{diag}}}
\newcommand{\card}{\ensuremath{\mathrm{card}}}
\newcommand{\sgn}{\ensuremath{\mathrm{Sgn}}}
\newcommand{\Span}{\ensuremath{\mathrm{span}}}
\newcommand{\tm}{\ensuremath{\mathrm{tm}}}
\newcommand{\am}{\ensuremath{\mathrm{am}}}
\newcommand{\sign}{\ensuremath{\mathrm{sign}}}
\newcommand{\conv}{\ensuremath{\mathrm{conv}}}
\newcommand{\ie}{\ensuremath{\mathrm{i.e~}}}
\newcommand{\argmin}{\ensuremath{\mathrm{argmin}\,}}
\newcommand{\argmax}{\ensuremath{\mathrm{argmax}\,}}
\newcommand{\plug}{\ensuremath{\,{\textcircled{\small{p}}}\,}}
\newcommand{\paste}{\ensuremath{\,\mathop{\sim}\limits^p\,}}
\newcommand{\join}{\ensuremath{\,\mathop{\sim}\limits^j\,}}
\newcommand{\cir}{\ensuremath{{\, \footnotesize \textcircled{}\,}}}
\newcommand{\cirplus}{\ensuremath{{\, \footnotesize\textcircled{\scriptsize{$+$}}\,}}}
\newcommand{\cirminus}{\ensuremath{{\, \footnotesize\textcircled{\scriptsize{$-$}}\,}}}
\title{Nodal Domain and the Multiplicity for Eigenvectors of 1-Laplacian on Graphs}
\begin{document}
\bibliographystyle{unsrt}

\title{Nodal Domains of Eigenvectors for $1$-Laplacian on Graphs}
\author{K.C. Chang\footnotemark[2],
\and Sihong Shao\footnotemark[2],
\and Dong Zhang\footnotemark[2]}
\renewcommand{\thefootnote}{\fnsymbol{footnote}}
\footnotetext[2]{LMAM and School of Mathematical Sciences, Peking University, Beijing 100871, China.}
\date{}
\maketitle
\begin{abstract}
The eigenvectors for graph $1$-Laplacian possess some sort of localization property: On one hand, any nodal domain of an eigenvector is again an eigenvector with the same eigenvalue; on the other hand, one can pack up an eigenvector for a new graph by several fundamental eigencomponents and modules with the same eigenvalue via few special techniques. The Courant nodal domain theorem for graphs is extended to graph $1$-Laplacian for strong nodal domains, but for weak nodal domains it is false. The notion of algebraic multiplicity is introduced in order to provide a more precise estimate of the number of independent
eigenvectors. A positive answer is given to a question raised in [{\sl K.~C. Chang, Spectrum of the $1$-Laplacian and Cheeger constant on graphs, J. Graph Theor., DOI: 10.1002/jgt.21871}], to confirm that the critical values obtained by the minimax principle may not cover all eigenvalues of graph $1$-Laplacian.
\end{abstract}

\section{Introduction}

The spectral theory for $1$-Laplacian on graphs is an interesting object. It has been studied recently in \cite{HeinBuhler2010, Chang2015, ChangShaoZhang2015} etc. Hein and B\"{u}hler \cite{HeinBuhler2010} proposed a straightforward nonlinear generalization of the linear eigenvalue problem for the standard graph Laplacian by defining the graph $1$-Laplacian as
\begin{equation}\label{eq:1-Lap-i}
\textstyle
(\Delta_1 \vec x)_i := \left\{\left.\sum_{j\sim i} z_{ij}(\vec x)\right|z_{ij}(\vec x)\in \sgn(x_i-x_j),\; z_{ji}(\vec x)=-z_{ij}(\vec x),\; \forall j\sim i\right\}\,\,\,\,\,\forall\,i=1,2,\cdots,n,
\end{equation}
where $\vec x=(x_1, \cdots, x_n)\in\mathbb{R}^n$, $j\sim i$ denotes vertex $j$ being adjacent to vertex $i$, $\sum_{j\sim i}$ means the summation over all vertices adjacent to vertex $i$, and
$$
\sgn (t)=\begin{cases}
1, &t>0,\\
-1,&t<0,\\
[-1,1],&t=0.
\end{cases}
$$
In consequence, the corresponding $1$-Laplacian eigenvalue problem is to solve
a pair $(\mu, \vec x)\in \mathbb{R}^1\times (\mathbb{R}^n\backslash\{0\})$  satisfying
\begin{equation}\label{eq:pair}
\vec 0\in \Delta_1 \vec x - \mu D \sgn(\vec x),
\end{equation}
where the diagonal matrix $D=\diag(d_1, \cdots, d_n)$ with $d_i$ being the degree of the vertex $i$,
and $\sgn(\vec x)=(\sgn (x_1), \sgn (x_2),\cdots,\sgn (x_n))^T$.

Very recently, Chang \cite{Chang2015} studied the same problem from a variational point of view and  developed a critical point theory for the function
\begin{equation}\label{eq:I}
I(\vec x)=\sum_{i\sim j}| x_i-x_j|,
\end{equation}
on a piecewise linear manifold
\begin{equation}\label{eq:X}
\textstyle
X=\left\{\vec x\in \mathbb{R}^n: \sum_{i=1}^nd_i|x_i|=1\right\}.
\end{equation}
He proved that the Euler equation of $I(\vec x)$ on $X$ is nothing but the system \eqref{eq:pair}, which can be rewritten in the coordinate form as
\begin{equation}\label{eq:eigen-system}
\left\{\begin{array} {l}
\sum_{j\sim i}z_{ij}\in \mu d_i \sgn(x_i),\,\,\,\,i=1,2,\cdots, n,\\
z_{ij}\in \sgn(x_i-x_j),\\
z_{ji}=-z_{ij}.
\end{array}\right.
\end{equation}
Based on such solid foundation, the Liusternik-Schnirelmann theory was extended, provided both the function $I(\vec x)$ and the constraint $X$ being invariant under the transformation
$\vec x\mapsto (-\vec x)$. By introducing the Krasnoselski genus $\gamma(A)$ of a closed symmetric subset $A\subset \mathbb{R}^n\backslash \{0\}$, a series of critical values are obtained \cite{Chang2015}:
\begin{equation}\label{eq:def-ck}
c_k=\inf_{\gamma(A)\ge k}\max_{\vec x\in A}I(\vec x),\,\,k=1,2,\cdots, n.
\end{equation}
Meanwhile, the Liusternik-Schnirelmann multiplicity theorem was also extended to $I|_X$, in which the topological multiplicity of an eigenvalue was defined via the genus.

More interestingly, the relationship between eigenvalues and the Cheeger constant for graphs was further revealed in \cite{Chang2015}. Let $h(G)$ be the Cheeger constant of the graph $G$, it was proved there that:
$$
c_2=h(G).
$$
And the mountain pass characterization of the Cheeger constant were also obtained.
Actually, the Cheeger constant can be characterized as a minimum of the function $I(\vec x)$ on a feasible subset $\pi$ (see Eq.~\eqref{eq:pi}) of $X$. Precisely, it was shown in \cite{ChangShaoZhang2015}  that the feasible set $\pi$ is nothing but the set of vectors whose median is zero in $X$ (\cite{ChangShaoZhang2015}, Theorem 2.9). Combining with some other ideas like the relaxation, several efficient algorithms were introduced in numerical computations \cite{ChangShaoZhang2015}.

Further more, the nodal domain theorem (\cite{Chang2015}, Theorem 3.6) reveals the structure of eigenvectors for the graph $1$-Laplacian. It implies that in most cases the set of eigenvectors of a given eigenvalue could be very huge, it appears as a union of cells. This huge set of eigenvectors makes trouble in numerical computations.

This paper continues the study. A {\sl new feature} of eigenvectors for the graph $1$-Laplacian is discovered. The eigenvectors possess some sort of localization property: On one hand (decomposition), any nodal domain of an eigenvector is again an eigenvector with the same eigenvalue; on the other hand (package), one can pack up an eigenvector for a new graph by several fundamental eigencomponents and modules with the same eigenvalue via several special techniques. Section \ref{sec:package} is devoted to this study. Theorem \ref{th:equivalent-binary} is on the decomposition, and the rest of this section is on the package of those components. Special techniques, including  extension, joining, pasting and plugging,
are introduced. Examples are provided in illustrating how to apply these techniques.

Based on Theorem \ref{th:equivalent-binary}, for any eigenvalue there must be a corresponding eigenvector with only one nodal domain. In this case, the notion on the largest number $\nu(\mu, G)$ of nodal domains for an eigenvalue $\mu$ on graph $G$ is introduced (see Definition \ref{def:nu}). With the aid of Theorem \ref{th:equivalent-binary} and those special techniques, we are able to calculate  $\nu(\mu, G)$
directly for three special graphs: path graphs $P_n$, cycle graphs $C_n$ and complete graphs $K_n$ in Section \ref{sec:PnCnKn}. It is interesting to note that the results for $P_n$ and $C_n$, stated in Theorems \ref{th:nodal-Pn} and \ref{th:nodal-Cn}, respectively, induce the counterparts of the Sturm-Liouville oscillation theorem in ordinary differential equations (ODE), of the oscillatory eigenfunctions for $1$-Laplacian on intervals and circles \cite{Chang2009}, as well as of the oscillatory eigenfunctions for standard Laplacian on $P_n$ in the linear spectral graph theory \cite{GantmacherKrein2002}.

To an eigenvector $\vec\phi$ on a graph, there are two kinds of nodal domains: the strong nodal domain and the weak nodal domain. They are denoted by $S(\vec\phi)$ and $W(\vec\phi)$, respectively.
The Courant nodal domain theorem has been extended to the standard linear Laplacian on graphs \cite{DaviesGladwellLeydoldStadler2001}. The extended version to the nonlinear Laplacian asserts that $S(\vec\phi_k)\le  k+r-1$, where $\vec\phi_k$ is any eigenvector with eigenvalue $c_k$ and $r$ is the topological multiplicity of $c_k$. This is Theorem \ref{th:strong-nodal-domain}. But for weak nodal domains there is no better estimate, see Example \ref{exam:10G-weak-nodal}. Not like in the linear spectral theory that the eigenvector of the first nonzero eigenvalue must be changing sign, Example \ref{exam:5G-one-nodal} provides a graph with $S(\vec\phi_2)=1$ for the graph $1$-Laplacian. We also give the condition under which $S(\vec\phi_2)\le 2$, see Theorem \ref{th:simple-c2}.

The $k$-way Cheeger constant is usually defined to be
\begin{equation}
\label{eq:hk}
h_k=\min_{S_1, S_2, \cdots, S_k}\max_{1\le i\le k}\frac{|\partial S_i|}{\vol(S_i)},
\end{equation}
where $\{S_1, S_2, \cdots, S_k\}$ is a set of disjoint subsets of $V$. It is motivated by the multi-way spectral partition, see \cite{LeeGharanTrevisan2011} for instance.
Recently, the relationship between the higher-order eigenvalue $\lambda_k$ of the standard graph Laplacian and $h_k$ has been obtained \cite{LeeGharanTrevisan2011}. In Theorem \ref{th:ckhk}, on one hand, we provide an estimate from below: $c_k\le h_k$ for the graph $1$-Laplacian; on the other hand, with the aid of nodal domains, we obtain an estimate from above: $h_m\le c_k$, if $\nu(c_k, G)\ge m$. The details are delineated in Sections \ref{sec:nodal-domain} and \ref{sec:k-way-Cheeger}.

In the rest of this paper, we study  several related topics.

There are two groups of eigenvalues, one by solutions $\{\mu_1, \mu_2, \cdots \}$ of the system \eqref{eq:pair} with natural order, another by $c_1\le c_2\le \cdots \le c_n$ via the minimax principle \eqref{eq:def-ck}. It was then asked \cite{Chang2015}: {\sl Is there any eigenvalue $\mu$, which is not in the sequence: $c_1\le c_2\le \cdots \le c_n$?} Section \ref{sec:example} gives a positive answer.

So far, the multiplicity of an eigenvalue $c$ means the topological multiplicity, i.e., $\gamma(\mathcal{K}_c)$, where $\mathcal{K}_c$ is the critical set with critical value $c$. Algebraic multiplicity of an eigenvalue is introduced in Section \ref{sec:algebra} in order to provide a more precise estimate of the number of independent
eigenvectors. Theorems \ref{th:algebra-nodal} and \ref{th:algebra-span-eigenvector} provide an estimate from below via nodal domains and an equivalent characterization of the algebraic multiplicity, respectively.


As we mentioned, the critical set $\mathcal{K}_c$ could be very huge, in a concrete problem,
it is natural to ask: {\sl How to choose a suitable eigenvector in $K_c$?}
In the last section, from the realistic purpose in finding the Cheeger cut to divide a graph into two parts as equally as possible, we introduce the notion of optimal Cheeger cut via nodal domains in Section \ref{sec:optimal}. We wish this will be helpful in clarifying the results in numerical computations.

\section{Decomposition and Package of eigenvectors}
\label{sec:package}

The eigenvectors of the $1$-Laplacian possess a special property:  In some sense, it can be localized. It makes us possible to develop some special techniques in dealing with the eigenvalue problem for the graph $1$-Laplacian. This section is divided into two parts. In the first part, we observe a {\sl new phenomenon} that any one of the nodal domains of an eigenvector for the $1$-Laplacian is again an eigenvector with the same eigenvalue (see Theorem \ref{th:equivalent-binary}). Accordingly, any eigenvector for the $1$-Laplacian can be decomposed into several fundamental eigenvectors with single nodal domain. In the second part, we introduce the notions of module and eigencomponent and provide some special techniques to put them together, resulting a new eigenvector.

\subsection{Decomposition}

In the following we always assume that eigenvectors and eigenvalues are with respect to $\Delta_1$ defined in Eq.~\eqref{eq:1-Lap-i} on an un-oriented connected graph $G=(V,E)$. The set of all eigenvectors, i.e., all solutions of the system \eqref{eq:eigen-system}, is denoted by $S(G)$.

For a vector $\vec x = (x_1,x_2,\cdots,x_n)\in\mathbb{R}^n$ with $n=|V|$, according to the signatures of $x_i$,
we classify the vertices into three groups:
\begin{equation*}
D^0 = \{i\in V: x_i=0\},
\quad D^\pm = \{i\in V: \pm x_i >0\}.
\end{equation*}
We call $D^0$ the null set of $\vec x$,
and the vertex set of a connected component of the subgraph induced by $D^\pm$ is called a $\pm$ nodal domain. Accordingly, we divide $V$ into $r^+ +r^-$ disjoint $\pm$ nodal domains plus the null set:
\[
V = \big(\bigcup_{\alpha=1}^{r^+}D_\alpha^+\big)\bigcup
\big(\bigcup_{\beta=1}^{r^-}D_\beta^-\big)\bigcup D^0,
\]
where $D^\pm_\gamma$ is a $\pm$ nodal domain and $r^\pm$ is the number of $\pm$ nodal domains.

Assume that an eigenvector $\vec\phi=(x_1, \cdots, x_n)$, has the following nodal domain decomposition
\begin{equation}\label{eq:nodal-decomposition}
 \vec\phi=\big(\sum^{r^+}_{\alpha=1}\sum_{i\in D^+_\alpha}-\sum^{r^-}_{\beta=1}\sum_{i\in D^-_\beta}\big) x_i \vec e_i,
\end{equation}
where $\{\vec e_1,\vec e_2,\cdots,\vec e_n\}$ is the Cartesian basis of $\mathbb{R}^n$. Let $D^\pm_\gamma$ denote any one of $D^+_\alpha$ or $D^-_\beta$, we write $\delta^\pm_\gamma=\vol(D^\pm _\gamma)$, and for any subset $D\subset V$, we define vectors $\vec1_D$ and $\hat{\vec1}_D$ as follow:
\[
(\vec1_D)_i=\left\{\begin{array} {l}
1,\,\,\,\,\,\,\,\,\,\,\,i\in D,\\
0,\,\,\,\,\,\,\,\,\,\,\,i\notin D
\end{array}\right.,
\quad
\hat{\vec1}_D=\frac{\vec1_D}{\vol(D)}.
\]

As we did in \cite{Chang2015} and \cite{ChangShaoZhang2015}, we denote
\begin{equation}\label{eq:pi}
\delta^\pm(\vec x) =\sum_{i\in D^\pm(\vec x)}d_i=\sum_{\gamma=1}^{r^\pm} \delta^\pm_\gamma,
\quad
\delta^0(\vec x) =\sum_{i\in D^0(\vec x)}d_i,
\quad
\pi =\{\vec x\in X: |\delta^+(\vec x)-\delta^-(\vec x)|\le \delta^0(\vec x)\},
\end{equation}
and call that $\vec x$ is equivalent to $\vec y$ in a set $A\subset X$, denoted by $\vec x\simeq \vec y$, if there exists a path $l(t)$ in $X$ such that $l(0)=\vec x$, $l(1)=\vec y$ and $l(t)\in A$ for any $t\in[0,1]$.

\begin{theorem}
\label{th:equivalent-binary}
If $\vec x\in S(G)$ has the nodal domain decomposition \eqref{eq:nodal-decomposition},
then $\forall\, \alpha\in\{1,\cdots,r^+\}$ (or $\forall\, \beta\in\{1,\cdots,r^-\}$)
$\vec x\simeq \hat{\vec 1}_{D^+_\alpha}$ (or $\hat{\vec 1}_{D^-_\beta}$) in $S(G)\cap I^{-1}(\mu)$ with
$\mu=I(\vec x)$.
\end{theorem}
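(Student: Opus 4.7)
The plan is to exhibit an explicit continuous path. Take the straight-line homotopy
\[
\vec x^{(s)}:=(1-s)\vec x+s\,\hat{\vec 1}_{D^+_\alpha},\qquad s\in[0,1],
\]
and show that it stays in $X\cap S(G)$ throughout. Once that is done, the equation $I(\vec x^{(s)})=\mu$ is automatic: $I$ is $1$-homogeneous, so Euler's identity combined with \eqref{eq:eigen-system} gives $I(\vec y)=\mu\sum_i d_i|y_i|=\mu$ for any $\vec y\in X\cap S(G)$ with eigenvalue $\mu$. The companion statement for $\hat{\vec 1}_{D^-_\beta}$ follows by sign-flipping symmetry.

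Checking $\vec x^{(s)}\in X$ is a one-line calculation. On $D^+_\alpha$ both $x_i$ and $1/\vol(D^+_\alpha)$ are strictly positive, so $|x_i^{(s)}|=(1-s)x_i+s/\vol(D^+_\alpha)$; outside $D^+_\alpha$ the indicator contributes nothing and $|x_i^{(s)}|=(1-s)|x_i|$. Weighting by $d_i$ and summing produces $(1-s)+s=1$.

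The heart of the proof is to show that $\vec x^{(s)}$ still satisfies \eqref{eq:eigen-system} with eigenvalue $\mu$. My plan is to reuse the very variables $\{z_{ij}\}$ witnessing $\vec x\in S(G)$. Two structural facts drive the verification. First, two distinct $+$ nodal domains (likewise two distinct $-$ nodal domains) share no edge in $G$, because each is a connected component of the subgraph induced by $D^+$ (respectively $D^-$). Second, coordinates in $D^+_\alpha$ remain strictly positive for every $s\in[0,1]$; coordinates in any other nonzero nodal domain keep their original sign for $s<1$ and collapse to $0$ at $s=1$; coordinates in $D^0$ stay $0$. Running through the finitely many edge types, one checks $z_{ij}\in\sgn(x_i^{(s)}-x_j^{(s)})$ at every edge and every $s$.

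It remains to verify the vertex equations $\sum_{j\sim i}z_{ij}\in\mu d_i\sgn(x_i^{(s)})$. For $i\in D^+_\alpha$ the original equality $\sum_{j\sim i}z_{ij}=\mu d_i$ suffices for all $s$, and for $i\in D^0$ the constraint is identical to the one already satisfied by $\vec x$. The genuinely new situation is $s=1$ with $i$ in an ``evaporating'' nodal domain distinct from $D^+_\alpha$: there $\sgn(x_i^{(1)})=[-1,1]$, and the required inequality $|\sum_{j\sim i}z_{ij}|\le\mu d_i$ is inherited from the original identity $\sum_{j\sim i}z_{ij}=\pm\mu d_i$ provided $\mu\ge 0$. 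I expect this endpoint compatibility — the one step that genuinely uses non-negativity of the eigenvalue — to deserve the most care.
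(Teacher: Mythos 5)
Your proof is correct, and its engine is the same as the paper's: deform the eigenvector by shrinking every nodal domain other than $D^+_\alpha$ to zero while keeping the subgradient variables $z_{ij}$ frozen, and check that all edge relations $z_{ij}\in\sgn(x_i-x_j)$ and all vertex relations of \eqref{eq:eigen-system} survive along the deformation. The differences are worth noting. The paper first invokes Theorem 3.8 of \cite{Chang2015} to replace $\vec x$ by its binary representative $r(\vec E^+-\vec E^-)$ and only then deforms, handling the endpoint $t=0$ by a closedness-of-$S(G)$ limit argument; your straight-line homotopy starts from the general, not necessarily binary, eigenvector, so that reduction is unnecessary, and you verify the $s=1$ endpoint directly, correctly isolating the only genuinely new check, namely $|\pm\mu d_i|\le\mu d_i$, i.e.\ $\mu\ge0$. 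This makes your argument self-contained. One small suggestion: the identity $I(\vec y)=\mu\sum_i d_i|y_i|$ for solutions of \eqref{eq:eigen-system}, which you use to keep the path inside $I^{-1}(\mu)$, deserves one explicit line, e.g.\ $I(\vec y)=\sum_{i\sim j}z_{ij}(y_i-y_j)=\sum_i y_i\sum_{j\sim i}z_{ij}=\mu\sum_i d_i|y_i|$; with that written out the proof is complete.
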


\begin{proof}
According to Theorem 3.8 in \cite{Chang2015}, we may assume
$$
\vec x=\vec \xi=r(\vec E^+-\vec E^-),
\quad
r=\frac{1}{\delta^++\delta^-},
\quad
\vec E^\pm=(\sum^{r^\pm}_{\gamma=1}\sum_{i\in D^\pm_\gamma})\vec e_i,
$$
and then we have
\begin{equation}\label{eq:xi}
\mu = I(\vec \xi),
\quad
\sum_{j\sim i} z_{ij}(\vec \xi)\in \mu \sgn(\xi_i),\,\,\,i=1,2,\cdots, n.
\end{equation}
Now for any $t\in[0,1]$ let
\[
\vec \xi_t =\rho_t\sum_{i\in D^+_\alpha}\vec e_i+rt\sum_{\gamma\neq \alpha}\sum_{i\in D^+_\gamma}\vec e_i-rt \vec E^-=(\rho_t-rt)\sum_{i\in D^+_\alpha}\vec e_i+rt (\vec E^+-\vec E^-),
\]
where
$$
(\rho_t-rt)\delta^+_\alpha +t=1.
$$
It can be readily checked that $\vec \xi_t$ satisfies the same system \eqref{eq:xi} for all $t\in (0, 1]$,
which implies that $\vec \xi_0\in S(G)$ and $I(\vec \xi_0)=\lim_{t\to 0+}I(\vec \xi_t)=\mu$ because
$S(G)$ is closed.
However,
\[
\vec \xi_0=\rho_0 \vec 1_{D^+_\alpha}= \hat{\vec 1}_{D^+_\alpha}.
\]
The proof is thus completed.
\end{proof}

\begin{cor}
\label{cor:nodal-eigenvector}
Let $(\mu,\vec\phi)$ be an eigenpair of $\Delta_1$. If $\vec \phi$ has $k$ nodal domains $\{D_\gamma \,|\, 1\le \gamma \le k\}$, then $\{\hat{\vec1}_{D_\gamma} \,|\, 1\le \gamma \le k\}$ are all eigenvectors with the same eigenvalue $\mu$.
\end{cor}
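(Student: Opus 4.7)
The plan is to derive this as an immediate consequence of Theorem \ref{th:equivalent-binary}. Index the $k$ nodal domains of $\vec\phi$ as $D^+_1,\dots,D^+_{r^+}$ and $D^-_1,\dots,D^-_{r^-}$ with $r^++r^-=k$, matching the notation of \eqref{eq:nodal-decomposition}. For each such nodal domain $D_\gamma$, Theorem \ref{th:equivalent-binary} supplies a continuous path $l(t)\subset X$ satisfying $l(0)=\vec\phi$, $l(1)=\hat{\vec 1}_{D_\gamma}$, and $l(t)\in S(G)\cap I^{-1}(\mu)$ for every $t\in[0,1]$. Reading the inclusion at $t=1$ gives $\hat{\vec 1}_{D_\gamma}\in S(G)$, so it solves the eigenvector system \eqref{eq:eigen-system}.

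To conclude that the eigenvalue attached to $\hat{\vec 1}_{D_\gamma}$ is the prescribed value $\mu$, I would invoke the identity $I(\vec x)=\mu$ valid for any $\vec x\in X\cap S(G)$ with eigenvalue $\mu$ (obtained by pairing \eqref{eq:pair} with $\vec x$ and using the normalization $\sum_i d_i|x_i|=1$). Since $\hat{\vec 1}_{D_\gamma}=\vec 1_{D_\gamma}/\vol(D_\gamma)$ manifestly lies on $X$ and its eigenvalue coincides with $I(\hat{\vec 1}_{D_\gamma})=\mu$ by the path-endpoint inclusion $l(1)\in I^{-1}(\mu)$, we are done. Running this argument once for each of the $k$ nodal domains yields the stated family of eigenvectors.

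I do not expect a genuine obstacle: the theorem has already performed all the nontrivial work (explicit construction and continuity of the homotopy $\vec\xi_t$, closedness of $S(G)$ at $t=0$). The only minor verifications are (i) that $\hat{\vec 1}_{D_\gamma}\in X$, which is a one-line computation from $\vol(D_\gamma)=\sum_{i\in D_\gamma}d_i$, and (ii) that the theorem as stated covers \emph{every} nodal domain, positive or negative, which is explicit in its conclusion. Thus the corollary is essentially a repackaging of Theorem \ref{th:equivalent-binary} applied once per domain.
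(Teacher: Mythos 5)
Your proposal is correct and matches the paper's intent exactly: the corollary is stated without a separate proof precisely because it is the immediate consequence of Theorem \ref{th:equivalent-binary} that you describe, namely reading off the endpoint $l(1)=\hat{\vec 1}_{D_\gamma}\in S(G)\cap I^{-1}(\mu)$ for each nodal domain and using that an eigenvector's eigenvalue equals its $I$-value on $X$. No further comment is needed.
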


\begin{cor}\label{cor:delta+<delta}
Under the assumption of Theorem \ref{th:equivalent-binary}, we have
$$
\vol(D^\pm _\gamma) \le \frac{1}{2}\vol(V),
$$
where $D^\pm_\gamma$ denotes any one of $D^+_\alpha$ or $D^-_\beta$.
\end{cor}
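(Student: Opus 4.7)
The plan is to use Theorem~\ref{th:equivalent-binary} to replace $\vec x$ by $\vec\phi := \hat{\vec 1}_D$, where $D$ denotes any one of the nodal domains $D^\pm_\gamma$, since this new vector is again an eigenvector with eigenvalue $\mu$. The claim is thus reduced to showing that if the normalized indicator $\hat{\vec 1}_D$ lies in $S(G)$ for some $D\subsetneq V$, then $\vol(D)\le \tfrac12\vol(V)$. From here, the natural strategy is to extract information from the eigenvalue system \eqref{eq:eigen-system} by summing it separately over $D$ and over $V\setminus D$, taking advantage of the antisymmetry $z_{ji}=-z_{ij}$.

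Once reduced, I would analyze the forced values of $z_{ij}$. For $i\in D$, $j\notin D$ one has $\phi_i>\phi_j$, hence $z_{ij}=1$; for edges internal to $D$ or to its complement, only $z_{ij}\in[-1,1]$ is required. Summing the identities $\sum_{j\sim i}z_{ij}=\mu d_i$ over $i\in D$ (using $\sgn(\phi_i)=1$ there) collapses all internal edges by antisymmetry, while every boundary edge contributes $+1$; this yields
\[
|\partial D| = \mu\,\vol(D).
\]

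Performing the same summation over $i\in V\setminus D$ gives $\sum_{i\in V\setminus D}\sum_{j\sim i}z_{ij}=-|\partial D|$, again by cancellation of internal contributions. On the right-hand side we only know $\sum_{j\sim i}z_{ij}\in\mu d_i[-1,1]$ because $\phi_i=0$, so the triangle inequality produces
\[
|\partial D|\le \mu\,\vol(V\setminus D).
\]
Combining this inequality with $\mu=|\partial D|/\vol(D)$ from the previous step gives $\vol(D)\le\vol(V\setminus D)$, which rearranges to the desired bound.

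The only subtlety I expect is handling the degenerate situation $|\partial D|=0$, in which we cannot divide by $|\partial D|$. However, connectedness of $G$ forces $D=V$ in that case, so $\vec\phi\propto\vec 1_V$ is the trivial eigenvector with $\mu=0$; since a nontrivial nodal decomposition of $\vec x$ rules this out (a strictly positive $\vec x$ satisfying \eqref{eq:eigen-system} leads, by summing all equations, to $\mu\vol(V)=0$), the argument is complete. The rest is bookkeeping: the computation is essentially the Cheeger-ratio identity $\mu=|\partial D|/\vol(D)$ combined with the admissibility constraint imposed by the zero vertices.
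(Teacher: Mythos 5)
Your argument is correct, but it takes a genuinely different route from the paper's. The paper's proof is essentially a two-line citation: it invokes Theorem 3.11 of \cite{Chang2015} (every eigenvector lies in the feasible set $\pi$ of \eqref{eq:pi}, i.e.\ $|\delta^+-\delta^-|\le\delta^0$), applies it to the original eigenvector $\vec x$, and then observes $\vol(D^+_\alpha)=\delta^+_\alpha\le\delta^+\le\delta^-+\delta^0+\delta^+-\delta^+=\tfrac12\vol(V)$ via the trivial containment $D^+_\alpha\subset D^+$. You instead first pass to the binary eigenvector $\hat{\vec 1}_D$ via Theorem \ref{th:equivalent-binary} and rederive the needed balance inequality from scratch, by summing the eigen-equations \eqref{eq:eigen-system} separately over $D$ and over $V\setminus D$ and exploiting the antisymmetry $z_{ji}=-z_{ij}$; your two summations are correct and give $|\partial D|=\mu\,\vol(D)$ and $|\partial D|\le\mu\,\vol(V\setminus D)$, which combine to the claim when $\mu>0$. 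In effect you prove directly the special case $\hat{\vec 1}_D\in\pi$ of the cited theorem. What your route buys is self-containedness (no external citation) plus the Cheeger-ratio identity $\mu=|\partial D|/\vol(D)$ as a byproduct; what it costs is length and the use of Theorem \ref{th:equivalent-binary} as an essential input, which the paper's proof does not need. One remark on the degenerate case: for the constant eigenvector $\hat{\vec 1}_V$ (eigenvalue $0$, single nodal domain equal to $V$) the stated inequality is false, so the corollary implicitly excludes $\mu=0$; this gap is shared by the paper's proof (the inequality $|\delta^+-\delta^-|\le\delta^0$ also fails there), and you are the one who flags it and argues it away, so this is a point in your favour rather than a defect.
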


\begin{proof} Take $D^+_\alpha$ as an example. Since $\vec x$ is an eigenvector, according to Theorem 3.11 in \cite{Chang2015}, we have $|\delta^+-\delta^-|\le \delta^0,$ which implies
$\delta^+\le \delta^- + \delta^0$. But $\delta^+ + \delta^- + \delta^0=\sum^n_{i=1} d_i$, therefore
$$
\vol (D_\alpha^+) = \delta^+_\alpha\le \delta^+\le \frac{1}{2}\sum^n_{i=1} d_i = \frac{1}{2}\vol(V).
$$
The proof is finished.
\end{proof}

The new finding presented in Theorem \ref{th:equivalent-binary}
serves as not only the jumping-off place but also the core foundation of the present study,
which reflects in at least three aspects below.
\begin{enumerate}[1.]
\item It provides a systematic way of computing all eigenvalues of the graph $1$-Laplacian by investigating only the vectors with single nodal domain. In this view, we obtain the entire spectrum of the complete graph $K_n$ (see Section \ref{sec:Kn}) and all the eigenvalues of $1$-Laplacian for the graph in Example \ref{fig:counterexample} (see Section \ref{sec:example}).

\item Besides the topological multiplicity (via genus) of an eigenvalue, the notion of algebraic multiplicity of an eigenvalue is derived from Theorem \ref{th:equivalent-binary} (see Section \ref{sec:algebra}). In particular, there may be many Cheeger cuts (i.e., binary eigenvectors corresponding to the second eigenvalue), and this leads us to introduce the optimal Cheeger cut (see Section \ref{sec:optimal}).

\item The localization property of eigenvectors gives us an opportunity to combine the eigenvectors with disjoint supports to form a new module for a larger graph. This guides us to consider the package (see Section \ref{subsec:package}).
Further with the aid of several special techniques, we successfully compute the eigenvectors with the most possible nodal domains of path graphs $P_n$ as well as cycle graphs $C_n$ (see Section \ref{sec:PnCnKn}).
\end{enumerate}

\subsection{Package}
\label{subsec:package}

Conversely, we can put together some pieces of small graphs with the same prescribing eigenvalue $\mu$. Let us first introduce the notions of a piece of graph, which will be used as a component or a module of a graph.

\begin{defn}[$\mu$-module]
\label{def:module}
Let $H=(V_H, E_H)$ be a connected graph and $\vec\phi\in X$. Assume
\begin{enumerate}[(1)]
\item $V_H=V_H^o \cup V_H^s$, satisfying $V_H^o\cap V_H^s=\emptyset$, and there are no inter-connections among vertices in $V_H^s$;
\item $\sum_{j\sim i}z_{ij}(\vec\phi)\in \mu d_i \sgn(\phi_i),\,\,\forall\, i\in V_H^o.$
\end{enumerate}
Then $(H, \vec\phi)$ is called a $\mu$-{\sl module}.
Here $V_H^o$ is called the {\sl core}, and $V_H^s$ the {\sl socket}.
\end{defn}

In the definition, $V_H^s$ could be an empty set.

\begin{defn}[$\mu$-eigencomponent]
\label{def:eigen-component}
A $\mu$-module $(H, \vec\phi)$ is called a $\mu$-{\sl eigencomponent}, if $V_H^o$ is connected, and $\vec\phi=\hat{\vec1}_{A}$ for some nonempty set $A\subset V_H^o$.
\end{defn}

We have some simple propositions as follow.

\begin{pro}
\label{pro:mu-eigencomponent}
Let $\vec \phi$ be a $\mu$ eigenvector of the graph $G=(V, E)$, and $D\subset V$ be a nodal domain of $\vec\phi$. If there exists a vertex $a\in V\backslash D$, which is adjacent to $D$, then $(H, \hat{\vec1}_D)$ is a $\mu$-eigencomponent, where $H=(V_H, E|_{V_H})$, $V_H=D\cup \{a\}$, $V_H^o=D$,
and $V_H^s=\{a\}$.
\end{pro}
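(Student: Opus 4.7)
The plan is to verify the defining clauses of Definitions \ref{def:module} and \ref{def:eigen-component} for the pair $(H, \hat{\vec 1}_D)$, with the eigen equation at core vertices being the only substantive piece.

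The structural requirements are essentially bookkeeping: $V_H^o \cap V_H^s = D \cap \{a\} = \emptyset$ because $a \notin D$; the socket $V_H^s = \{a\}$ is a singleton, so the ``no inter-connections among vertices in $V_H^s$'' clause is vacuous; the induced subgraph $H$ is connected since $D$ is connected (as a nodal domain) and $a$ is adjacent to some vertex of $D$; the core $V_H^o = D$ is connected for the same reason; and $\hat{\vec 1}_D$ is the normalized indicator of the nonempty set $A := D \subseteq V_H^o$, matching the prescribed shape of an eigencomponent.

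For the module identity $\sum_{j \sim i} z_{ij}(\hat{\vec 1}_D) \in \mu d_i \sgn((\hat{\vec 1}_D)_i)$ at every $i \in V_H^o = D$, I would invoke Corollary \ref{cor:nodal-eigenvector}, which through Theorem \ref{th:equivalent-binary} guarantees that $\hat{\vec 1}_D$ is itself a $\mu$-eigenvector of $G$. A global selection $\{z_{ij}\}$ on the edges of $G$ therefore realizes the $G$-eigen equation at each $i \in D$. Since $\hat{\vec 1}_D$ is positive on $D$ and zero outside, the relevant subdifferentials are completely explicit: on each interior edge of $D$, $z_{ij} \in [-1,1]$ with $z_{ji} = -z_{ij}$, while on every edge from $D$ to the exterior, $z_{ij} = +1$ is forced. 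Transporting this selection to the edges of $H$ yields the module identity at each core vertex.

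The main obstacle is this transport step itself: when a vertex $i \in D$ has $G$-neighbors lying in $V \setminus V_H$ (necessarily distinct from $a$), those edges drop out of the $H$-sum although each contributed $+1$ to the $G$-sum; one must absorb the discrepancy into the free interior $z_{ij}$ while respecting both the $[-1,1]$ range and the antisymmetry $z_{ji} = -z_{ij}$. The feasibility of such a redistribution is exactly the global consistency encoded in Theorem \ref{th:equivalent-binary}, so this is where the real work sits; the remaining eigencomponent clauses then follow cosmetically.
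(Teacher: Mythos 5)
Your structural verifications and the reduction to Corollary \ref{cor:nodal-eigenvector} are exactly what the paper's (very short) proof does, so up to that point you match it. The gap is in your final ``transport'' step. You correctly observe that a core vertex $i\in D$ may have $G$-neighbours outside $V_H=D\cup\{a\}$, so that the sum in condition (2) of Definition \ref{def:module} loses some forced $+1$ terms while the degree drops by the same count; but your claim that the discrepancy can be absorbed into the free interior $z_{ij}$, and that Theorem \ref{th:equivalent-binary} guarantees this, is false. Sum the required identity $\sum_{j\sim i,\,j\in V_H}z_{ij}=\mu d^H_i$ over all $i\in D$: antisymmetry kills every interior term, each edge to $a$ contributes $+1$, and you are left with the rigid constraint $|E(D,\{a\})|=\mu\,\vol_H(D)$, where $\vol_H(D)=2|E(D,D)|+|E(D,\{a\})|$. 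Since $\mu=I(\hat{\vec1}_D)=|\partial D|/\vol_G(D)$ with $\vol_G(D)=2|E(D,D)|+|\partial D|$, and $t\mapsto t/(2|E(D,D)|+t)$ is injective when $\mu<1$, this forces $|E(D,\{a\})|=|\partial D|$, i.e.\ \emph{every} edge leaving $D$ must end at $a$. So the redistribution you invoke is impossible in precisely every case where it would be needed. Concretely, $\vec\phi\propto(1,1,-1,-1,-1,1,1)$ is a $\frac13$-eigenvector of $P_7$ with middle nodal domain $D=\{3,4,5\}$; taking $a=6$, the module identities in $H=\{3,4,5,6\}$ read $z_{34}=\frac13$, $z_{43}+z_{45}=\frac23$, $z_{54}+z_{56}=\frac23$ with $z_{56}=1$ forced, and these have no common solution. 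The correct object there is the type $(B)$ component of Lemma \ref{lem:Pn}, which carries \emph{both} external neighbours as sockets.

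The repair is not a cleverer redistribution but an extra hypothesis: the statement holds exactly when $a$ is the only vertex of $V\setminus D$ adjacent to $D$ (equivalently, when $V_H^s$ is enlarged to contain all such vertices), and that is how the paper actually uses the proposition --- Lemma \ref{lem:Pn} explicitly assumes $V^o_H$ is not adjacent to $V(P_n)\setminus V_H$. Under that hypothesis your transport step is vacuous: every $G$-neighbour of a core vertex already lies in $V_H$, the $H$-degree equals the $G$-degree, and the module identity at $i\in D$ is verbatim the eigen-equation for $\hat{\vec1}_D$ on $G$ supplied by Corollary \ref{cor:nodal-eigenvector}. You should either add this hypothesis (or the enlarged socket) to your argument or accept that the literal statement cannot be proved; the paper's own one-line assertion that condition (2) ``holds automatically'' glosses over the same point, so you located the right difficulty but resolved it with a step that fails.
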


\begin{proof}
Since $a$ is adjacent to $D$, $V_H$ is connected, and $V_H^s=\{a\}$ is a single vertex set, there are no inter connections in $V_H^s$.
Moreover, $D$ is a nodal domain of $\vec\phi$ with eigenvalue $\mu$, the system in (2) of Definition \ref{def:module} holds automatically.
\end{proof}

\begin{pro}
\label{pro:mu-eigenvector}
If $(H, \vec\phi)$ is a $\mu$-eigencomponent with $V_H^s=\emptyset$, then $\vec\phi$ is an eigenvector of the graph $H$ with eigenvalue $\mu$.
\end{pro}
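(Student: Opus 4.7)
The plan is to prove this by directly verifying the three conditions of the eigenvector system \eqref{eq:eigen-system} on $H$. The hypothesis $V_H^s=\emptyset$ together with $V_H=V_H^o\cup V_H^s$ forces $V_H=V_H^o$, which means condition (2) in Definition \ref{def:module} now holds at \emph{every} vertex of $H$, not merely at the core. This is the crucial leverage point: the eigenvalue identity at the socket vertices, which could have failed in a general $\mu$-module, is vacuously satisfied.

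Concretely, I would first note that $\vec\phi=\hat{\vec 1}_A\in X$, and in particular $\vec\phi\neq\vec 0$, which is one of the two nontriviality requirements for an eigenvector of $\Delta_1$ as a pair in $\mathbb{R}\times(\mathbb{R}^n\setminus\{\vec 0\})$ satisfying \eqref{eq:pair}. Then I would invoke condition (2) of Definition \ref{def:module} to obtain, for every $i\in V_H=V_H^o$, skew-symmetric quantities $z_{ij}(\vec\phi)\in\sgn(\phi_i-\phi_j)$ with $z_{ji}(\vec\phi)=-z_{ij}(\vec\phi)$ such that $\sum_{j\sim i}z_{ij}(\vec\phi)\in\mu d_i\sgn(\phi_i)$. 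These are exactly the three lines of the coordinate-form system \eqref{eq:eigen-system} on the graph $H$, whence $\vec\phi$ is an eigenvector of $\Delta_1$ on $H$ corresponding to $\mu$.

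There is essentially no obstacle here; the statement is a bookkeeping consequence of the definitions, and its role is really to justify the terminology ``eigencomponent.'' The only point worth being careful about is that the sign and skew-symmetry conditions on $z_{ij}(\vec\phi)$ are baked into the notation introduced in \eqref{eq:1-Lap-i}, so once condition (2) of Definition \ref{def:module} is in force at every vertex, the full eigenvector system \eqref{eq:eigen-system} is automatic and no further selection of $z_{ij}$'s is required.
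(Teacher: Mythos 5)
Your proof is correct and matches the paper's approach: the paper simply states that the proposition follows directly from the definitions, and your write-up is an accurate elaboration of exactly that observation, namely that $V_H^s=\emptyset$ forces $V_H=V_H^o$ so that condition (2) of Definition \ref{def:module} delivers the full system \eqref{eq:eigen-system} at every vertex, with $\vec\phi=\hat{\vec 1}_A\neq\vec 0$ supplying the nontriviality.
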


\begin{proof}
It follows from the definition directly.
\end{proof}

\begin{pro}
If $(\mu, \vec\phi)$ is an eigenpair of $1$-Laplacian on a connected graph $G$, and $\vec\phi$ has the following nodal domain decomposition:
$$
\vec\phi=\frac{1}{\delta^++\delta^-}(\sum_{\alpha=1}^{r^+} \vec 1_{D^+_\alpha}-\sum_{\beta=1}^{r^-} \vec 1_{D^-_\beta}),
$$
then $\forall\,\gamma\in\{1,\cdots,r^\pm\}$, $(H_{D^\pm_\gamma}, \hat{\vec 1}_{D^\pm_\gamma})$ is a $\mu$-eigencomponent, where
$V_{H_{D^\pm_\gamma}}^o=D^\pm_\gamma$,
and $ V_{H_{D^\pm_\gamma}}^s=\emptyset$.
\end{pro}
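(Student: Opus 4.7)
The plan is to verify in turn each of the defining conditions of a $\mu$-eigencomponent (Definition \ref{def:eigen-component}), which in turn unpacks the two $\mu$-module axioms of Definition \ref{def:module}. The one nontrivial ingredient is Corollary \ref{cor:nodal-eigenvector}: it already tells us that each $\hat{\vec 1}_{D^\pm_\gamma}$ is itself an eigenvector of $G$ with the same eigenvalue $\mu$, so the eigenvalue system \eqref{eq:eigen-system} holds at every vertex $i \in D^\pm_\gamma$ for a suitable choice of $z_{ij}$.

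For the module axioms applied to the pair $(H_{D^\pm_\gamma}, \hat{\vec 1}_{D^\pm_\gamma})$ with the declared $V_{H_{D^\pm_\gamma}}^o = D^\pm_\gamma$ and $V_{H_{D^\pm_\gamma}}^s = \emptyset$, condition (1) is vacuous: the requirement $V_H^o \cap V_H^s = \emptyset$ and the non-existence of inter-connections among vertices in $V_H^s$ hold trivially because the socket is empty. Condition (2) is the relation $\sum_{j \sim i} z_{ij}(\hat{\vec 1}_{D^\pm_\gamma}) \in \mu d_i \sgn((\hat{\vec 1}_{D^\pm_\gamma})_i)$ for $i \in V_H^o = D^\pm_\gamma$, which is precisely the eigenvalue equation for $\hat{\vec 1}_{D^\pm_\gamma}$ restricted to the vertices of the core, and this is exactly what Corollary \ref{cor:nodal-eigenvector} delivers.

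For the additional eigencomponent requirements in Definition \ref{def:eigen-component}, the core $V_H^o = D^\pm_\gamma$ is connected by the very definition of a nodal domain, since $D^\pm_\gamma$ is the vertex set of a connected component of the subgraph induced by $D^\pm$. Finally, $\hat{\vec 1}_{D^\pm_\gamma}$ is of the form $\hat{\vec 1}_A$ with the nonempty choice $A = D^\pm_\gamma \subset V_H^o$.

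I do not expect any real obstacle: the substantive content, namely that a single nodal domain of an eigenvector again yields an eigenvector through its indicator $\hat{\vec 1}_{D^\pm_\gamma}$, has already been established in Theorem \ref{th:equivalent-binary} and its immediate corollary, and the proposition is essentially a translation of that fact into the module/eigencomponent vocabulary introduced in this subsection.
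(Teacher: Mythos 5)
Your proof is correct and follows essentially the same route as the paper, whose entire argument is the one-line remark that the proposition is a combination of Theorem \ref{th:equivalent-binary} and Proposition \ref{pro:mu-eigenvector}. You simply unpack that combination more explicitly: Corollary \ref{cor:nodal-eigenvector} (itself an immediate consequence of Theorem \ref{th:equivalent-binary}) supplies the eigenvalue relation on the core, and the remaining module/eigencomponent conditions are trivial because the socket is empty and a nodal domain is connected by definition.
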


\begin{proof}
It is a combination of Theorem \ref{th:equivalent-binary} and Proposition \ref{pro:mu-eigenvector}.
\end{proof}

Next we provide some examples on modules and eigencomponents.

\begin{example}\rm
\label{exam:P3}
Fig.~\ref{fig:P3} shows a $\frac13$-eigencomponent with only one socket denoted by $\mathcal{M}_1$ hereafter. Here $\mu=\frac13$, $V=\{1,2,3\}$, $E=\{(12), (23)\}$,
$\vec \phi=\frac13 (1,1,0)$, $V^o=\{1,2\}$, $V^s=\{3\}$, $z_{12}=\frac13$.
\begin{figure}[htpb]
\centering
\begin{tikzpicture}[auto]
\node (1) at (0,0) {$\bullet$};
\node (2) at (1,0) {$\bullet$};
\node (3) at (2,0) {$0$};
\node (5) at (3.5,0) {$=$};
\node (4) at (5,0) {$P_2\sim\circ$};
\node (1-) at (0.3,0.3) {$1$};
\node (2-) at (1.3,0.3) {$2$};
\node (3-) at (2.3,0.3) {$3$};
\draw (1) to (2);
\draw (2) to (3);
\draw (1) circle(0.23);
\draw (2) circle(0.23);
\draw (3) circle(0.23);
\end{tikzpicture}
\caption{A $\frac13$-eigencomponent with one socket used in Example \ref{exam:P3}. Hereafter, the components of $\vec\phi$ corresponding to the vertices marked by bullets are either all positive or all negative. }
\label{fig:P3}
\end{figure}
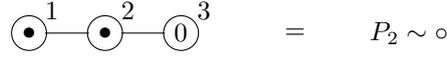
\end{example}

\begin{example}\rm
\label{exam:C3}
A $\frac12$-eigencomponent with only one socket, denoted by $\mathcal{M}_2$ hereafter,  is shown in Fig.~\ref{fig:C3}. We take $\mu=\frac12$,
$V=\{1,2,3\}$, $E=\{(12), (13), (23)\}$,
$\vec\phi=\frac14 (1,1,0)$, $V^o=\{1,2\}$, $V^s=\{3\}$, $z_{12}=0$.
\begin{figure}[htpb]
\centering
\begin{tikzpicture}[auto]
\node (1) at (0,0) {$\bullet$};
\node (2) at (2,0) {$\bullet$};
\node (3) at (1,1) {$0$};
\node (1-) at (0,0.4) {$1$};
\node (2-) at (2,0.4) {$2$};
\node (3-) at (1.3,1.3) {$3$};
\draw (1) to (2);
\draw (2) to (3);
\draw (1) to (3);
\draw (1) circle(0.23);
\draw (2) circle(0.23);
\draw (3) circle(0.23);
\end{tikzpicture}
\caption{A $\frac12$-eigencomponent with one socket used in Example \ref{exam:C3}.}
\label{fig:C3}
\end{figure}
\end{example}

\begin{example}\rm
\label{exam:6v}
A $\frac17$-eigencomponent with two sockets,
denoted by $\mathcal{M}_3$ hereafter, is displayed in
Fig.~\ref{fig:6v}. Here $\mu=\frac17$,
$V=\{1,2,3,4,5,6\}$,
$E=\{(12), (13), (14), (15), (23), (24), (34), (36)\}$,
$\vec\phi=\frac{1}{14}(1,1,1,1,0,0)$,
$V^o=\{1,2,3,4\}$,
$V^s=\{5,6\}$,
$z_{13}=z_{24}=0$,
$z_{12}=z_{14}=z_{32}=z_{34}=\frac{3}{14}$.
\begin{figure}[htpb]
\centering
\begin{tikzpicture}[auto]
\node (3) at (0,0) {$\bullet$};
\node (4) at (-2,2) {$\bullet$};
\node (2) at (2,2) {$\bullet$};
\node (1) at (0,4) {$\bullet$};
\node (5) at (-2,4) {$0$};
\node (6) at (2,0) {$0$};
\node (8) at (3.3,2) {$=$};
\node (7) at (5,2) {$\circ\sim K_4\sim \circ$};
\node (3-) at (-0.35,0) {$3$};
\node (4-) at (-2.3,2.3) {$2$};
\node (2-) at (2.3,2.3) {$4$};
\node (1-) at (0.3,4.3) {$1$};
\node (5-) at (-2.3,4.3) {$5$};
\node (6-) at (2.3,0.3) {$6$};
\draw (4) to (3);
\draw (4) to (2);
\draw (3) to (2);
\draw (1) to (2);
\draw (1) to (3);
\draw (1) to (4);
\draw (3) to (6);
\draw (1) to (5);
\draw (1) circle(0.23);
\draw (2) circle(0.23);
\draw (3) circle(0.23);
\draw (4) circle(0.23);
\draw (5) circle(0.23);
\draw (6) circle(0.23);
\end{tikzpicture}
\caption{A $\frac17$-eigencomponent with two sockets used in Example \ref{exam:6v}.}
\label{fig:6v}
\end{figure}
\end{example}

\begin{example}\rm
\label{exam:P5}
A $\frac13 $-eigencomponent with two sockets, denoted by $\mathcal{M}_4$ hereafter, is given in Fig.~\ref{fig:P5}. Here we take
$\mu=\frac13$,
$V=\{1,2,3,4,5\}$,
$E=\{(12), (23), (34), (45)\}$,
$\vec\phi=\frac16 (0,1,1,1,0)$,
$V^o=\{2,3,4\}$,
$V^s=\{1,5\}$,
$z_{23}=-\frac13$,
$z_{34}=\frac13$.
\begin{figure}[htpb]
\centering
\begin{tikzpicture}[auto]
\node (1) at (1,0) {$0$};
\node (2) at (2,0) {$\bullet$};
\node (3) at (3,0) {$\bullet$};
\node (4) at (4,0) {$\bullet$};
\node (5) at (5,0) {$0$};
\node (8) at (6.3,0) {$=$};
\node (6) at (8,0) {$\circ\sim P_3\sim \circ$};
\node (1-) at (1.3,0.3) {$1$};
\node (2-) at (2.3,0.3) {$2$};
\node (3-) at (3.3,0.3) {$3$};
\node (4-) at (4.3,0.3) {$4$};
\node (5-) at (5.3,0.3) {$5$};
\draw (1) to (2);
\draw (2) to (3);
\draw (3) to (4);
\draw (4) to (5);
\draw (1) circle(0.23);
\draw (2) circle(0.23);
\draw (3) circle(0.23);
\draw (4) circle(0.23);
\draw (5) circle(0.23);
\end{tikzpicture}
\caption{A $\frac13$-eigencomponent with two sockets used in Example \ref{exam:P5}.}
\label{fig:P5}
\end{figure}
\end{example}

\begin{example}\rm
\label{exam:P6}
A $\frac13$-module with one socket is shown in Fig.~\ref{fig:P6}. We take here
$\mu=\frac13$,
$V=\{1,2,3,4,5,6\}$,
$E=\{(12), (23), (34), (45), (56)\}$,
$\vec\phi=\frac19 (1,1,-1,-1,-1,0)$,
$V^o=\{1,2,3,4,5\}$,
$V^s=\{6\}$,
$z_{12}=\frac13$,
$z_{34}=-\frac13$,
$z_{45}=\frac13$.
This module can be seen as a plugging module of $\mathcal{M}_1$ and $\mathcal{M}_4$ (see Section \ref{subsec:plugging}).
\begin{figure}[htpb]
\centering
\begin{tikzpicture}[auto]
\node (1) at (1,0) {$+$};
\node (2) at (2,0) {$+$};
\node (3) at (3,0) {$-$};
\node (4) at (4,0) {$-$};
\node (5) at (5,0) {$-$};
\node (6) at (6,0) {$0$};
\node (8) at (7.3,0) {$=$};
\node (7) at (9,0) {$\M_1\plug\M_4$};
\node (1-) at (1.3,0.3) {$1$};
\node (2-) at (2.3,0.3) {$2$};
\node (3-) at (3.3,0.3) {$3$};
\node (4-) at (4.3,0.3) {$4$};
\node (5-) at (5.3,0.3) {$5$};
\node (6-) at (6.3,0.3) {$6$};
\draw (1) to (2);
\draw (2) to (3);
\draw (3) to (4);
\draw (4) to (5);
\draw (5) to (6);
\draw (1) circle(0.23);
\draw (2) circle(0.23);
\draw (3) circle(0.23);
\draw (4) circle(0.23);
\draw (5) circle(0.23);
\draw (6) circle(0.23);
\end{tikzpicture}
\caption{A $\frac13$-module used in Example \ref{exam:P6}.}
\label{fig:P6}
\end{figure}
\end{example}

All the eigencomponents shown in Examples \ref{exam:P3}, \ref{exam:C3}, \ref{exam:6v} and \ref{exam:P5} are of course modules, but
the module given in Example \ref{exam:P6} is not an eigencomponent. In the sequel of this section, we shall introduce a few special techniques to construct a new $\mu$-module from given $\mu$-modules.

\subsection{Extension}

Given a $\mu$-module $(H, \vec\phi)$ of a graph $G$, sometimes we can simply extend $\vec\phi$ to $\tilde{\vec\phi}$ by $0$ outside $H$, such that $\tilde{\vec\phi}$ is an eigenvector of $G$.

\begin{pro}
\label{pro:extension}
Let $H$ be a subgraph of $G=(V, E)$, and let $(H, \vec\phi)$ be a $\mu$-eigencomponent. Assume $\vec\phi$ is an eigenvector on $H$ with $\vec\phi|_{V_H^s}=0$, together with
\begin{equation}
\label{eq:extension-condition}
 j\sim V_H\,\mbox{and}\,j\notin V_H \,\Rightarrow\, j \,\mbox{is disconnected with}\, V_H^o.
\end{equation}
Then
\begin{equation*}
\tilde{\phi}_i=
\left\{
\begin{array}{l}
\phi_i,\,\,\,\,\,i\in V_H\\
0,\,\,\,\,\,\,\,i\notin V_H,
\end{array}
\right.
\end{equation*}
is an eigenvector with eigenvalue $\mu$ on $G$.
\end{pro}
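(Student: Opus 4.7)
The plan is to exhibit explicit values $z_{ij}$ on every edge of $G$ certifying $(\mu,\tilde{\vec\phi})$ as a solution of the eigensystem \eqref{eq:eigen-system}. Since $\vec\phi$ is an eigenvector on $H$, there are $z_{ij}^H\in\sgn(\phi_i-\phi_j)$ with $z_{ji}^H=-z_{ij}^H$ satisfying $\sum_{j\sim i}z_{ij}^H\in\mu d_i\sgn(\phi_i)$ at every $i\in V_H$ (all quantities computed inside $H$). My extension is simply $z_{ij}:=z_{ij}^H$ for edges of $H$ and $z_{ij}:=0$ for every remaining edge of $G$. The pointwise requirements $z_{ij}\in\sgn(\tilde\phi_i-\tilde\phi_j)$ and $z_{ji}=-z_{ij}$ are then transparent: on an edge of $H$ they come for free since $\tilde{\vec\phi}|_{V_H}=\vec\phi$; on any new edge, both endpoints carry value $0$ under $\tilde{\vec\phi}$, and $0\in\sgn(0)=[-1,1]$.

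Next I would verify the vertex-wise inclusion $\sum_{j\sim i}z_{ij}\in\mu d_i\sgn(\tilde\phi_i)$ in $G$ by splitting into three cases. For $i\in V_H^o$, hypothesis \eqref{eq:extension-condition} rules out any $G$-neighbor of $i$ that lies outside $V_H$: such a neighbor would be a vertex outside $V_H$ adjacent to $V_H$ yet still connected to $V_H^o$, contradicting the assumption. Hence the $G$-neighborhood and $G$-degree of $i$ coincide with those in $H$, and the required inclusion reduces verbatim to the $H$-eigenvector condition at $i$. For $i\in V\setminus V_H$, every incident $z$-value equals $0$ by construction, so the sum vanishes and sits trivially in $\mu d_i\sgn(0)=[-\mu d_i,\mu d_i]$, which is what is needed since $\tilde\phi_i=0$.

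The delicate case, and the one I flag as the main obstacle, is a socket vertex $i\in V_H^s$ that has additional $G$-neighbors outside $V_H$: here the $G$-degree $d_i^{G}$ strictly exceeds the $H$-degree $d_i^{H}$, while only the $H$-part contributes to the sum. What makes the argument go through is that $\tilde\phi_i=0$, so the target set is the full ball $[-\mu d_i^{G},\mu d_i^{G}]$; the $H$-eigenvector property already pins the sum inside the smaller ball $[-\mu d_i^{H},\mu d_i^{H}]$, and the inclusion $[-\mu d_i^{H},\mu d_i^{H}]\subseteq[-\mu d_i^{G},\mu d_i^{G}]$ is immediate from $d_i^{H}\le d_i^{G}$. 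In other words, hypothesis \eqref{eq:extension-condition} is exactly what is needed to block such a degree gap at a core vertex, where no slack is available, whereas at a socket vertex the slack built into $\sgn(0)=[-1,1]$ absorbs the discrepancy automatically. Assembling the three cases yields \eqref{eq:eigen-system} for $(\mu,\tilde{\vec\phi})$ on $G$, completing the proof.
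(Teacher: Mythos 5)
Your proof is correct and takes essentially the same route as the paper's: extend the $z_{ij}$ by zero on all edges outside $H$, then verify the eigensystem separately at core, exterior, and socket vertices, using hypothesis \eqref{eq:extension-condition} to force the $G$- and $H$-neighborhoods to coincide at core vertices and letting the slack in $\sgn(0)=[-1,1]$ absorb the degree discrepancy at socket and exterior vertices. Your explicit remark that the socket case is where $d_i^H\le d_i^G$ matters is exactly the step the paper makes with its $\tilde d\le d_i$ observation.
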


\begin{proof}
We verify the system \eqref{eq:eigen-system} for eigenvectors.  For $i\in V_H^o$, by the assumption \eqref{eq:extension-condition}: if $j\sim i$, then $j\in V_H.$ Since $(H, \vec\phi)$ is an eigencomponent,
\begin{equation*}
\sum_{j\sim i,~j\in V_H} z_{ij}(\tilde{\vec\phi})=\sum_{j\sim i,~j\in V_H} z_{ij}(\vec\phi)\in \mu d_i \sgn(\phi_i)=\mu d_i \sgn(\tilde{\phi}_i).
\end{equation*}
As to $i\notin V_H$, if $j\sim i$, then either $j\notin V_H$ or $j\in V_H^s$ again by the assumption \eqref{eq:extension-condition}, in both cases, $\tilde{\phi}_j=0$. Since $\tilde{\phi}_i=0$, one may choose $z_{ij}(\tilde{\vec\phi})=0$. Thus
\begin{equation*}
\sum_{j\sim i} z_{ij}(\tilde{\vec\phi})=0\in \mu d_i [-1, 1]=\mu d_i \sgn(\tilde{\phi}_i).
\end{equation*}
It remains to consider the case $i\in V_H^s$. Since now $\tilde{\phi}_i= \phi_i=0,$ and
\begin{equation*}
\sum_{j\sim i} z_{ij}(\tilde{\vec\phi})=(\sum_{j\sim i, j\in V_H}+  \sum_{j\sim i, j\notin V_H})  z_{ij}(\tilde{\vec\phi}).
\end{equation*}
Again we may set $z_{ij}(\tilde{\vec\phi})=0,\,\forall\, j\notin V_H,$ we obtain
\begin{equation*}
\sum_{j\sim i} z_{ij}(\tilde{\vec\phi})=\sum_{j\sim i} z_{ij}(\vec\phi) \in \mu \tilde{d} \sgn(\tilde{\phi}_i),
\end{equation*}
where $\tilde{d}$ is the degree of $i$ in $V_H$ provided that $\vec\phi$ is an eigenvector on $H$.  However, $\tilde{d}\le d_i$, the degree of $i$ in $V$, it follows
\begin{equation*}
\sum_{j\sim i} z_{ij}(\tilde{\vec\phi})=\sum_{j\sim i} z_{ij}(\vec\phi) \in \mu d_i \sgn(\tilde{\phi}_i).
\end{equation*}
The proof is completed.
\end{proof}

As applications of Proposition \ref{pro:extension}, we embed the modules, for example,
\begin{enumerate}
\item $H=\{1,2,3,4\}$, $\vec\phi=\frac13 (1,1,0,0)$, $\mu=\frac13$ to $P_n, n>4$;
\item $H=\{1,2,3,4,5,6\}$,
$\vec\phi=\frac14 (0,0,1,1,0,0)$,
$\mu=\frac12$ to $C_n, n>6$.
\end{enumerate}
Then eigenvectors for these two larger graphs are obtained by the extension.

\subsection{Joining}
\label{subsec:join}

\begin{pro}
Let $\{(H_i, \vec\phi_i)\,|\, i=1, \cdots, m\}$ be $\mu$-modules with $V_i=V^o_i\cup V^s_i, \,i=1, \cdots, m$. Assume
\begin{enumerate}[(1)]
\item $\exists\, u_i\in V^s_i$ with $(\vec\phi_i)_{u_i}=0$;
\item $\exists$ an edge set $E'$ to the vertex set $V'=\{u_1, \cdots, u_m\}$ together with $\alpha_{ij}\in [-1,1]$, satisfying
\begin{align*}
\alpha_{ij}&=-\alpha_{ji},\,\,\forall\, (ij)\in E', \\
\sum_{v\in V_i, v\sim u_i}z_{u_j u_i}&+\sum_{u_j\sim u_i}\alpha_{ij}\in \mu(d_{u_i}+d'_{u_i})[-1, 1],\,\,\forall u_i\in V'
\end{align*}
where $d'_{u_i}$ is the degree of $u_i$ in $V'$.
\end{enumerate}
Let
$$
V=\cup^m_{i=1} V_i,\quad
E=(\cup^m_{i=1}E_i) \cup E',\quad
\vec\phi=\oplus^m_{i=1}\vec\phi_i.
$$
Then $(H,\vec\phi)$ with $H=(V, E)$ is also a $\mu$-module, with
$$
V^o=(\cup^m_{i=1} V^o_i)\cup V', \quad
V^s=\cup^m_{i=1}( V_i^s\backslash\{u_i\}).
$$
In this case, we call $(H, \vec\phi)$ a joining module of modules $\{(H_i, \vec\phi_i)\}$,
denoted by
$$
(H, \vec\phi) = (H_1, \vec\phi_1)\join(H_2, \vec\phi_2)\cdots\join(H_m, \vec\phi_m).
$$
\end{pro}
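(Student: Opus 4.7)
My plan is to verify directly the two conditions of Definition~\ref{def:module} for the triple $(H, \vec\phi)$ with $V^o=(\bigcup_i V_i^o)\cup V'$ and $V^s=\bigcup_i(V_i^s\setminus\{u_i\})$. The only nontrivial input will be assumption~(2), which is in fact exactly tailored to the eigen-system requirement at the gluing vertices $u_k$.

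First I would handle the set-theoretic and socket parts. Since each $V_i=V_i^o\sqcup V_i^s$ and the $V_i$ are pairwise disjoint as vertex sets, the proposed $V^o$ and $V^s$ partition $V=\bigcup_i V_i$. For the socket non-adjacency: edges inside a single $V_i$ between socket vertices are forbidden by the module condition for $(H_i,\vec\phi_i)$; edges between different modules exist only in $E'$ and have both endpoints in $V'\subset V^o$. Hence no two vertices of $V^s$ are adjacent in $H$.

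Next I would verify the core condition $\sum_{j\sim i} z_{ij}(\vec\phi)\in \mu d_i\sgn(\phi_i)$ for every $i\in V^o$ by splitting into two cases. If $i\in V_k^o$ for some $k$, then $i\notin V'$, so no edge of $E'$ is incident to $i$, whence the neighbors and degree of $i$ in $H$ coincide with those in $H_k$ and $\phi_i=(\phi_k)_i$. Reusing the selectors $z_{ij}(\vec\phi_k)$ from the module $(H_k,\vec\phi_k)$ gives the required inclusion immediately. If $i=u_k\in V'$, then $\phi_{u_k}=0$ by assumption~(1), so $\sgn(\phi_{u_k})=[-1,1]$, and the degree of $u_k$ in $H$ splits as $d_{u_k}+d'_{u_k}$. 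I would take the $z$-values on edges inside $V_k$ from $(H_k,\vec\phi_k)$ and the values $\alpha_{kj}$ (which lie in $[-1,1]=\sgn(0-0)$ and satisfy the antisymmetry required by the system~\eqref{eq:eigen-system}) on edges of $E'$ incident to $u_k$; assumption~(2) is then precisely the statement that the total sum belongs to $\mu(d_{u_k}+d'_{u_k})[-1,1]$.

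The main obstacle, such as it is, is purely bookkeeping: one has to be careful that at the gluing vertex $u_k$ no edge is double-counted and the degree in $H$ really equals $d_{u_k}+d'_{u_k}$. This is where the structural hypothesis that $V'$-edges only live in $E'$, and the hypothesis $u_k\in V_k^s$ (so $u_k$ has no $V_j$-neighbors for $j\neq k$), enter decisively. Once this is tracked, the statement follows with no further computation.
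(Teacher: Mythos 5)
Your proof is correct and follows exactly the route the paper intends: the paper's own ``proof'' is the single remark that the proposition ``can be readily verified by straightforward calculations,'' and your case split (interior core vertices of each $V_k^o$, where the old selectors $z_{ij}(\vec\phi_k)$ are reused, versus the gluing vertices $u_k\in V'$, where $\phi_{u_k}=0$ makes $\sgn(\phi_{u_k})=[-1,1]$ and assumption (2) is precisely the required inclusion for the degree $d_{u_k}+d'_{u_k}$) is that straightforward calculation, carried out with the correct bookkeeping of degrees and of the socket non-adjacency condition.
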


The above proposition can be readily verified by straightforward calculations.

\begin{example}\rm
\label{exam:order9G}
 We join three copies of $\mathcal{M}_2$ (see Example \ref{exam:C3}), and then produce a new module with eigenvalue $\frac12$. Here we set $\vec\phi_1=\vec\phi_2=-\vec\phi_3=\vec\phi$, $E'=\{(u_1 u_2), (u_2 u_3), (u_3 u_1)\}$, $\alpha_{u_1u_2}=\alpha_{u_2u_3}= \alpha_{u_3u_1}=0$,
where $u_i\in D^0(\vec\phi_i)$. Accordingly, we obtain a joining module with eigenvalue $\frac12 $ and eigenvector $\frac{1}{12}(0,0,0,1,1,1,1,-1,-1)$ on $9$ vertices displayed in Fig.~\ref{fig:order9G}. 
In fact, $V_1=\{4, 5, u_1\}, V_2=\{6,7, u_2\}, V_3=\{8,9, u_3\}$;  $V^o_1=\{4,5\}, V^o_2=\{6,7\}, V^o_3=\{8,9\}$; $V^s_i=\{u_i\}, i=1,2,3$. Thus $V^o=\{4,5,6,7,8,9, u_1,u_2,u_3\}, V^s=\emptyset$. Moreover, let $V'_i=V_i\cup\{v_i\}, i=1,2,3$, $E_1=\{(4,5), (u_1, 4), (u_1, 5), (u_1, v_1)\}$, similarly for $E_2, E_3$,  and  $V'^o_i=V^o_i, V^s=\{u_i, v_i\,|\, i=1,2,3\}$, where $v_1,v_2,v_3$ are the new added vertices and they are not displayed in Fig.~\ref{fig:order9G}. Then
$V'^o=V^o, V'^s=\{v_1, v_2, v_3\}$.
Again, we obtain a $\frac12 $-eigencomponent.
\begin{figure}[htpb]
\centering
\begin{tikzpicture}[auto]
\node (1) at (0,1) {$0$};
\node (2) at (-1,-1) {$0$};
\node (3) at (1,-1) {$0$};
\node (4) at (-1,3) {$+$};
\node (5) at (1,3) {$+$};
\node (6) at (3,-1) {$+$};
\node (7) at (2,-3) {$+$};
\node (8) at (-2,-3) {$-$};
\node (9) at (-3, -1) {$-$};
\node (1-) at (0,1.4) {$u_1$};
\node (2-) at (-1.3,-0.7) {$u_3$};
\node (3-) at (1.3,-0.7) {$u_2$};
\node (4-) at (-1.3,3.3) {$4$};
\node (5-) at (1.3,3.3) {$5$};
\node (6-) at (3.3,-1.3) {$6$};
\node (7-) at (2.3,-3.3) {$7$};
\node (8-) at (-2.3,-3.3) {$8$};
\node (9-) at (-3.3, -1.3) {$9$};
\draw (1) to (2);
\draw (2) to (3);
\draw (3) to (1);
\draw (1) to (4);
\draw (1) to (5);
\draw (2) to (8);
\draw (2) to (9);
\draw (3) to (6);
\draw (3) to (7);
\draw (4) to (5);
\draw (6) to (7);
\draw (8) to (9);
\draw (1) circle(0.23);
\draw (2) circle(0.23);
\draw (3) circle(0.23);
\draw (4) circle(0.23);
\draw (5) circle(0.23);
\draw (6) circle(0.23);
\draw (7) circle(0.23);
\draw (8) circle(0.23);
\draw (9) circle(0.23);
\end{tikzpicture}
\caption{A joining module used in Example \ref{exam:order9G}.}
\label{fig:order9G}
\end{figure}
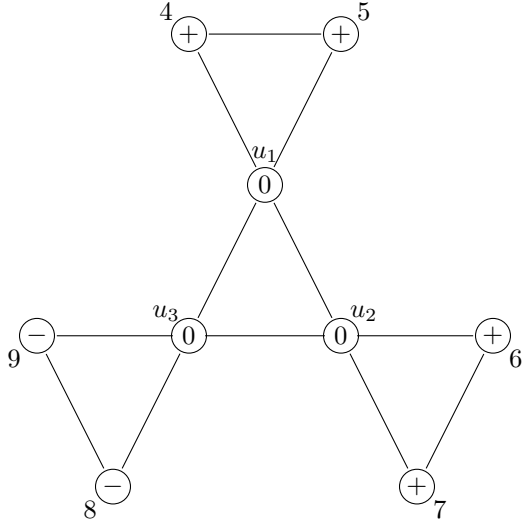
\end{example}

\subsection{Pasting}

\begin{pro}
\label{pro:paste}
Let $\{(H_i, \vec\phi_i)\,|\,1\le i\le m\}$ be $\mu$-modules and $\vec\phi_i=\sum_{v\in V_i} x^i_v \vec e^i_v, \,1\le i\le m$. Suppose there exists $\, u_i\in V_i^s$ such that $(\vec\phi_i)_{u_i},\,1\le i\le m$ are all equal, and
\begin{equation}
\label{assump:u}
\sum_{i=1}^m \sum_{v\sim u_i} z_{u_i v}(\vec\phi_i)\in \mu \sum^m_{i=1}d_{u_i}\sgn((\vec\phi_i)_{u_i}).
\end{equation}
We identify the $m$ vertices $\{u_i\},1\le i\le m$, as one vertex $\{u\}$, and let $V=\cup^m_{i=1} (v_i\backslash \{u_i\})\sqcup \{u\},\, E=\cup^m_{i=1} E_i$, in which edges of the form $(v u_i)$ are identified with $(v u)$, and $H=(V, E)$.
Then $(H, \vec\phi)$ is a $\mu$-module, where
$$
V^o=\cup^m_{i=1} V_i^o \cup \{u\},\,\, \,
V^s=\cup^m_{i=1} (V_i^s\backslash\{u_i\}), \,\,\, \vec\phi=\sum^m_{i=1}\sum_{v\neq u_i}x^i_v \vec e^i_v+\phi_u \vec e_u,
$$
and $\phi_u$ is defined to be $(\vec\phi_i)_{u_i}, \,\forall\,i\in\{1,\cdots,m\}$ as assumed.
In this case, we call $(H, \vec\phi)$ a pasting module of modules $\{(H_i, \vec\phi_i)\}$,
denoted by
$$
(H, \vec\phi) = (H_1, \vec\phi_1)\paste(H_2, \vec\phi_2)\cdots\paste(H_m, \vec\phi_m).
$$
\end{pro}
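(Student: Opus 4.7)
The plan is to verify directly that $(H, \vec\phi)$ meets the two clauses of Definition \ref{def:module}. Clause (1)---the partition $V = V^o \cup V^s$ with no socket-socket edges---is bookkeeping built into the construction; clause (2)---the eigen-inclusion at every core vertex---splits into two independent cases: (a) a vertex $v$ inherited from some $V_i^o$, handled by transporting the $\mu$-module property of $(H_i, \vec\phi_i)$, and (b) the identified vertex $u$, which is precisely where hypothesis \eqref{assump:u} is invoked.

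For clause (1), the disjointness of $\{V_i \setminus \{u_i\}\}_i$ and $\{u\}$ is part of the definition of $V$, and any edge of $E = \bigcup_i E_i$ joining two vertices of $V^s = \bigcup_i (V_i^s \setminus \{u_i\})$ would have to lie inside a single $E_i$ (distinct modules share only $u \in V^o$), contradicting the socket condition for $(H_i, \vec\phi_i)$. For case (a) of clause (2), a vertex $v \in V_i^o$ has all its $H$-neighbors inside $V_i$, since any $V_j$ with $j \ne i$ meets $V_i$ only at $u$, whose incident edge to $v$ is already in $E_i$. One therefore sets $z_{vj}(\vec\phi) := z_{vj}(\vec\phi_i)$ for $j \sim v$; because $\phi_w = (\vec\phi_i)_w$ throughout $V_i$ (using $\phi_u = (\vec\phi_i)_{u_i}$ at $w = u_i$), this remains a valid selector and the inclusion $\sum_{j\sim v} z_{vj}(\vec\phi) \in \mu d_v \sgn(\phi_v)$ transfers verbatim. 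For case (b), the neighbor set of $u$ in $H$ is the disjoint union of the $H_i$-neighborhoods of the various $u_i$, whence $d_u = \sum_i d_{u_i}$ and
\[
\sum_{j\sim u} z_{uj}(\vec\phi) \;=\; \sum_{i=1}^m \sum_{v\sim u_i} z_{u_i v}(\vec\phi_i);
\]
since $\sgn((\vec\phi_i)_{u_i}) = \sgn(\phi_u)$ uniformly in $i$, hypothesis \eqref{assump:u} is exactly the desired inclusion in $\mu d_u \sgn(\phi_u)$.

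The main point requiring attention is the global consistency of the selector $\{z_{ij}(\vec\phi)\}$ across the different modules. This is automatic: an edge $(vw)$ lying simultaneously in $E_i$ and $E_j$ with $i \ne j$ would force $\{v,w\} \subset V_i \cap V_j \subset \{u\}$, impossible for a two-endpoint edge, so each edge of $H$ inherits its witness from a unique $H_i$ unambiguously. A small remark dispatches the degenerate case $\phi_u = 0$: there $\mu d_u \sgn(\phi_u) = [-\mu d_u, \mu d_u]$, and hypothesis \eqref{assump:u} is already the inclusion in this interval, so nothing further is needed. Beyond this, the argument is a direct aggregation of the individual module conditions, and I do not anticipate any genuine analytic obstacle.
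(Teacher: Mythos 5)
Your proposal is correct and follows essentially the same route as the paper's verification: the core vertices inherited from some $V_i^o$ keep all their neighbors inside $V_i$ so the module condition transfers with the same selectors, while the identified vertex $u$ is handled exactly by hypothesis \eqref{assump:u} together with $d_u=\sum_i d_{u_i}$ and the common value of $(\vec\phi_i)_{u_i}$. Your additional remarks on clause (1) of Definition \ref{def:module} and on the unambiguity of the edge-wise selector only make explicit what the paper leaves implicit.
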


The verification of Proposition \ref{pro:paste} is as follows. We only need to verify the vertices in $V^o\backslash\{u\}$ provided by the assumption \eqref{assump:u}. Actually, $\forall\, v\in V^o\backslash\{u\}$ there exists unique $j$ such that $v\in V^o_j$, and then all vertices adjacent to $v$ must be in $V_j$. Accordingly, we have
$$
\sum_{w\sim v, w\in V}z_{wv}(\vec\phi)=\sum_{w\sim v,w\in V_j}z_{wv}\in \mu d_v \sgn((\vec\phi_j)_v)=\mu d_v \sgn(\phi_v).
$$
Therefore the verification is completed.

\begin{example}\rm
\label{exam:P7}
We paste the two modules $\mathcal{M}_1=P_2\sim\circ$ (see Example \ref{exam:P3}) and $\mathcal{M}_4=\circ\sim P_3\sim \circ$ (see Example \ref{exam:P5})  by identifying the two vertices, $u_1=\{3\}$ in Example \ref{exam:P3} and $u_2=\{1\}$ in Example \ref{exam:P5} as a new vertex $u=\{3\}$. The resulting module is shown in Fig.~\ref{fig:P7}.
Now we have
$\mu=\frac13$,
$V=\{1,2,3,4,5,6,7\}$,
$E=\{(12), (23), (34), (45), (56), (67)\}$, $\vec\phi=\frac19 (1,1,0,-1,-1,-1,0)$,
$V^o=\{1,2,3,4,5,6\}$,
$V^s=\{7\}$.
\begin{figure}[htpb]
\centering
\begin{tikzpicture}[auto]
\node (1) at (1,0) {$+$};
\node (2) at (2,0) {$+$};
\node (3) at (3,0) {$0$};
\node (4) at (4,0) {$-$};
\node (5) at (5,0) {$-$};
\node (6) at (6,0) {$-$};
\node (7) at (7,0) {$0$};
\node (1-) at (1.3,0.3) {$1$};
\node (2-) at (2.3,0.3) {$2$};
\node (3-) at (3.3,0.3) {$3$};
\node (4-) at (4.3,0.3) {$4$};
\node (5-) at (5.3,0.3) {$5$};
\node (6-) at (6.3,0.3) {$6$};
\node (7-) at (7.3,0.3) {$7$};
\draw (1) to (2);
\draw (2) to (3);
\draw (3) to (4);
\draw (4) to (5);
\draw (5) to (6);
\draw (6) to (7);
\draw (1) circle(0.23);
\draw (2) circle(0.23);
\draw (3) circle(0.23);
\draw (4) circle(0.23);
\draw (5) circle(0.23);
\draw (6) circle(0.23);
\draw (7) circle(0.23);
\end{tikzpicture}
\caption{A pasting module used in Example \ref{exam:P7}.}
\label{fig:P7}
\end{figure}
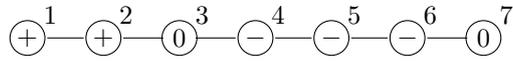
\end{example}

\begin{example}\rm
\label{exam:5orderG}
One may paste up two copies of the modules in Example \ref{exam:C3} by identifying the null vertices in both copies and by changing the sign of the function in one of the two copies. The pasting module of two $\M_2$ shown in Fig.~\ref{fig:5orderG}, where we set
$\mu=\frac12$,
$V=\{1,2,3,4,5\}$,
$E=\{(12), (15), (25), (35), (45), (34)\}$, $\vec\phi=\frac18 (1,1,-1,-1,0)$,
$V^o=\{1,2,3,4\}$,
$V^s=\{5\}$. In fact, it can be easily checked that $(\frac12 , \vec\phi)$ is indeed an eigenpair of the graph $(V, E)$.
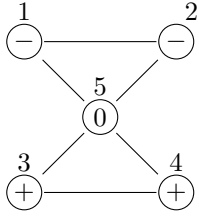
\begin{figure}[htpb]
\centering
\begin{tikzpicture}[auto]
\node (1) at (-1,1) {$-$};
\node (2) at (1,1) {$-$};
\node (3) at (-1,-1) {$+$};
\node (4) at (1,-1) {$+$};
\node (5) at (0,0) {$0$};
\node (1-) at (-1,1.4) {$1$};
\node (2-) at (1.2,1.4) {$2$};
\node (3-) at (-1,-0.6) {$3$};
\node (4-) at (1,-0.6) {$4$};
\node (5-) at (0,0.4) {$5$};
\draw (1) to (2);
\draw (1) to (5);
\draw (2) to (5);
\draw (3) to (4);
\draw (3) to (5);
\draw (4) to (5);
\draw (1) circle(0.23);
\draw (2) circle(0.23);
\draw (3) circle(0.23);
\draw (4) circle(0.23);
\draw (5) circle(0.23);
\end{tikzpicture}
\caption{A pasting module used in Example \ref{exam:5orderG}.}
\label{fig:5orderG}
\end{figure}
\end{example}

\subsection{Plugging}
\label{subsec:plugging}

\begin{pro}
\label{pro:plug}
Let $\{(H_i, \vec\phi_i)\,|\,i=1,2\}$ be two $\mu$-components. Assume
\begin{enumerate}[(1)]
\item $\exists\, \tilde{V}^s_i=\{u^i_1,\cdots, u^i_m\}\subset V_i^s,\,i=1,2$, and $\forall\,j,\,\exists$ unique $v^i_j\in V^o_i$ such that $v^i_j$ is uniquely adjacent to $u^i_j, 1\le j\le m,\, i=1,2,$ and $(\vec\phi_i)_{v_j^i}=1$;
\item $\vec\phi_i|_{\tilde{V}^s_i}=0,\;\forall\, i\in\{1,2\}$.
\end{enumerate}
Let
$$
V=(V_1\backslash \tilde{V_1}^s)\cup (V_2\backslash \tilde{V_2}^s), \,\,\,
E=(E_1|_{V_1\backslash\tilde{V_1^s}})\cup (E_2|_{V_2\backslash\tilde{V_2^s}})\cup (\cup^m_{j=1} \{(v^1_j v^2_j)\}),
\,\,\,
H=(V,E),
$$
where both $(v^1_j u^1_j)$ and $(v^2_j u^2_j)$ identify with $(v^1_j v^2_j)$, and
$$
\vec\phi=\vec\phi_1|_{V_1\backslash\tilde{V}_1^s}\oplus (-\vec\phi_2)|_{V_2\backslash\tilde{V}_2^s}.
$$
Then $(H, \vec\phi)$ is a $\mu$-module, where
$$
V^o=V^o_1\cup V^o_2, \,\,\,
V^s=(V^s_1\backslash \tilde{V}^s_1)\cup (V^s_2\backslash \tilde{V}^s_2).
$$
In this case, we call $(H, \vec\phi)$ a plugging module of $(H_1, \vec\phi_1)$ and $(H_2, \vec\phi_2)$, denoted by
\[
(H, \vec\phi) = (H_1, \vec\phi_1)\plug (H_2, \vec\phi_2).
\]
\end{pro}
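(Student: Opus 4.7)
The plan is to verify directly, from Definition \ref{def:module}, that $(H,\vec\phi)$ is a $\mu$-module with the stated decomposition $V^o=V^o_1\cup V^o_2$ and $V^s=(V^s_1\setminus\tilde V^s_1)\cup(V^s_2\setminus\tilde V^s_2)$. The structural requirement that $V^o$ and $V^s$ be disjoint and that $V^s$ carry no inter-connections is immediate from the construction: the only newly introduced edges $(v^1_j v^2_j)$ run between the two cores, while the restriction of $E_i$ to $V^s_i\setminus\tilde V^s_i$ only inherits original edges of $H_i$, which already had no inter-socket connections.

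The substantive step is to check the eigen-relation $\sum_{k\sim i}z_{ik}(\vec\phi)\in\mu d_i\sgn(\phi_i)$ at every $i\in V^o$. For $i\in V^o_1$ I would split on whether $i$ is a plug point $v^1_j$. If $i\neq v^1_j$ for every $j$, then by the uniqueness part of assumption (1), namely that $u^1_j$ has $v^1_j$ as its unique neighbor in $V^o_1$, the vertex $i$ is not adjacent to any vertex of $\tilde V^s_1$; hence its $H$-neighborhood and the values of $\vec\phi$ on it coincide with the corresponding data in $H_1$, and the eigen-relation at $i$ in $H$ is nothing but the one inherited from the $\mu$-eigencomponent $(H_1,\vec\phi_1)$.

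The interesting case is $i=v^1_j$. In $H_1$ the eigen-equation at $v^1_j$ reads $\sum_{k\sim v^1_j}z_{v^1_j k}(\vec\phi_1)=\mu d_{H_1}(v^1_j)$, an equality because $(\vec\phi_1)_{v^1_j}=1>0$ forces $\sgn=1$. Passing to $H$, the neighbor $u^1_j$ is exchanged for $v^2_j$: the deleted edge contributed $z_{v^1_j u^1_j}(\vec\phi_1)=\sgn(1-0)=+1$, while the new edge contributes $z_{v^1_j v^2_j}(\vec\phi)=\sgn(1-(-1))=+1$. These cancel in the sum, and since $d_H(v^1_j)=d_{H_1}(v^1_j)$ is preserved by the swap, the identity holds verbatim at $v^1_j$. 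The case $i=v^2_j\in V^o_2$ is treated identically up to a sign flip: because $\vec\phi|_{V_2\setminus\tilde V^s_2}=-\vec\phi_2|_{V_2\setminus\tilde V^s_2}$ and $\sgn$ is odd, one may choose $z_{v^2_j k}(\vec\phi)=-z_{v^2_j k}(\vec\phi_2)$ on $V_2\setminus\tilde V^s_2$, and the removed and added contributions each evaluate to $-1$, yielding the relation with $\sgn(\phi(v^2_j))=-1$ on the right-hand side.

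The main obstacle I foresee is purely bookkeeping: tracking the sign flip through the computation on the $V_2$ side, and confirming that ``$v^i_j$ is uniquely adjacent to $u^i_j$'' really does produce a one-for-one edge exchange with no degree mismatch at every plug point. Once these conventions are fixed, everything reduces to a single algebraic identity, namely that the removed term and the added term at each plug point coincide in magnitude and sign, which is exactly what the hypothesis $(\vec\phi_i)_{v^i_j}=1$ is designed to guarantee.
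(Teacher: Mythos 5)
Your proposal is correct and follows essentially the same route as the paper's own verification: the eigen-relation at non-plug core vertices is inherited verbatim, and at each plug point $v^i_j$ the deleted edge to $u^i_j$ and the new edge to the opposite plug point contribute the same value $z=\pm 1$ with the degree unchanged, so the sum is invariant. The only stylistic difference is that you additionally spell out the structural conditions of Definition \ref{def:module} (disjointness and absence of inter-socket edges), which the paper leaves implicit; one tiny wording slip is that the removed and added terms do not ``cancel'' but coincide, as you yourself say correctly in the final paragraph.
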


The verification of Proposition \ref{pro:plug} is as follows. To vertices other than $\{v^1_j, v^2_j:\,1\le j\le m\}$, the equations in system \eqref{eq:eigen-system} are trivially true. We only verify those equations at
$\{v^1_j, v^2_j:\,1\le j\le m\}$. For $\vec\phi_1$ and $\vec\phi_2$, it can be easily checked that
\begin{align*}
\sum_{k\sim v^1_j, k\in V_1\backslash\tilde{V}_1^s} z_{v^1_j k}(\vec\phi_1)+z_{v^1_j u^1_j}(\vec\phi_1)&=\mu d_{v^1_j},\,\,\forall\, 1\le j\le m, \\
\sum_{k\sim v^2_j, k\in V_2\backslash\tilde{V}_2^s} z_{v^1_j k}(-\vec\phi_2)+z_{v^2_j u^2_j}(-\vec\phi_2) & =-\mu d_{v^2_j},\,\,\forall\, 1\le j\le m, \\
z_{v^1_j u^1_j}(\vec\phi_1)=1, \,\,\, z_{v^2_j u^2_j}(-\vec\phi_2)&=-1.
\end{align*}
In consequence, if $k\sim v^1_j$ and $k\neq u^1_j$, then we have $k\in V_1\backslash\tilde{V}_1^s$,
and
\begin{align*}
\sum_{k\sim v^1_j, k\neq u^1_j} z_{v^1_j k}(\vec\phi)+z_{v^1_j v^2_j}(\vec\phi)=\mu d_{v^1_j}&=\mu d_{v^1_j}\sign((\vec\phi)_{v^1_j}),\,\,\forall\, 1\le j\le m.
\end{align*}
Similarly, 
\begin{align*}
\sum_{k\sim v^2_j, k\neq u^2_j} z_{v^2_j k}(\vec\phi)+z_{v^2_j v^1_j}(\vec\phi)=-\mu d_{v^2_j}&=\mu d_{v^2_j}\sign((\vec\phi)_{v^2_j}),\,\,\forall\, 1\le j\le m.
\end{align*}
Therefore the verification is finished.

A plugging module, $\mathcal{M}_1\plug\mathcal{M}_4$,  has been shown in Example \ref{exam:P6}. Below we present another example.

%

\begin{example}\rm
\label{exam:P5plugtoC6}
 We plug two copies of the module $\mathcal{M}_4=\circ \sim P_3\sim\circ$ (see Example \ref{exam:P5}), first by identifying the null vertices in one module to their copies in the other, after eliminating them, we change the sign of the function $\vec\phi$ in one of the two copies. The resulting
module is $\mathcal{M}_4\plug\mathcal{M}_4$ which is shown in Fig.~\ref{fig:P5plugtoC6},  with $\mu=\frac13$, $\vec\phi=\frac{1}{12}(1,1,1,-1,-1,-1)$.
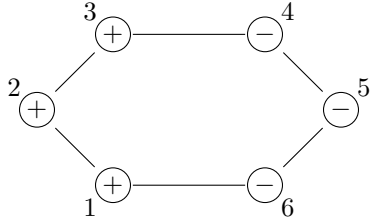
\begin{figure}[htpb]
\centering
\begin{tikzpicture}[auto]
\node (1) at (-1,-1) {$+$};
\node (2) at (-2,0) {$+$};
\node (3) at (-1,1) {$+$};
\node (4) at (1,1) {$-$};
\node (5) at (2,0) {$-$};
\node (6) at (1,-1) {$-$};
\node (1-) at (-1.3,-1.3) {$1$};
\node (2-) at (-2.3,0.3) {$2$};
\node (3-) at (-1.3,1.3) {$3$};
\node (4-) at (1.3,1.3) {$4$};
\node (5-) at (2.3,0.3) {$5$};
\node (6-) at (1.3,-1.3) {$6$};
\draw (1) to (2);
\draw (2) to (3);
\draw (3) to (4);
\draw (4) to (5);
\draw (5) to (6);
\draw (6) to (1);
\draw (1) circle(0.23);
\draw (2) circle(0.23);
\draw (3) circle(0.23);
\draw (4) circle(0.23);
\draw (5) circle(0.23);
\draw (6) circle(0.23);
\end{tikzpicture}
\caption{A plugging module used in Example \ref{exam:P5plugtoC6}.}
\label{fig:P5plugtoC6}
\end{figure}
\end{example}

\section{Eigenvectors for Special Graphs}
\label{sec:PnCnKn}

To a given $\mu$, applying the techniques introduced in Section \ref{sec:package}, one may obtain an eigenvector with as many nodal domains as possible for some special graphs, like path graphs $P_n$,
cycle graphs $C_n$ and complete graphs $K_n$.

Let $G=(V, E)$ be a graph, and let $\vec\phi=(x_1, \cdots, x_n)$ be a vector. A positive strong nodal domain of $\vec\phi$ is a maximal connected induced subgraph of $G$ on $D^+$.
For an eigenvalue $\mu$, it is possible to have many eigenvectors. The set of  eigenvectors with the same eigenvalue $\mu$ is denoted by $\mathcal{K}_\mu$. According to Theorem \ref{th:equivalent-binary}, there must be $\vec\phi\in \mathcal{K}_\mu$ such that $S(\vec\phi)=1$. We are led to introduce the following definition.

\begin{defn}
\label{def:nu}
We define
$$
\nu(\mu, G)=\max_{\phi\in \mathcal{K}_\mu} S(\phi),
$$
i.e., the largest number of strong nodal domains for all eigenvectors in $\mathcal{K}_\mu$.
\end{defn}

If $G$ is connected, then we have $\mathcal{K}_0=\{\hat{\vec 1}_G\}$ and thus $\nu(0, G)=1$. So we are just interested in counting $\nu(\mu, G)$ for $\mu\neq 0$. Before that, we
let $\sigma(G)$ denote the spectrum of $\Delta_1$ on a graph $G$.

\subsection{Path graphs $P_n$}

\begin{lemma}
\label{lem:Pn}
Let $H$ be a connected subgraph of $P_n$ and $V(P_n)$ be the vertex set of $P_n$.  Assume $V^o_H$ is not adjacent to $V(P_n)\backslash V_H$.
If $ V^o_H\cap\{1, n\}=\emptyset$, then for any $l \in \mathbb{N}^+$,  $(H, \hat{\vec 1}_{V^o_H})$ is a $\frac{1}{l}$-eigencomponent if and only if $V^o_H=\{m, m+1, \cdots, m+l-1\}$ with $m>1$ and $m+l\le n$. Otherwise, for any $k\in \mathbb{N}$, $(H, \hat{\vec 1}_{V^o_H})$ is a $\frac{1}{2k+1}$-eigencomponent if and only if $V^o_H=\{1, 2, \cdots, k+1\}$ or $V^o_H=\{n-k, n-k+1,  \cdots, n\}$.
\end{lemma}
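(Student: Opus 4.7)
The plan is to reduce the eigencomponent condition on $(H,\hat{\vec 1}_{V^o_H})$ to a linear recursion on the edge-flow variables $a_j := z_{j,j+1}(\vec\phi)$ along $P_n$, and then to read the eigenvalue off the two boundary conditions at the ends of $V^o_H$.

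First I would observe that since an eigencomponent requires $V^o_H$ to be connected, $V^o_H$ is necessarily a contiguous block $\{m,m+1,\ldots,m+l'-1\}$ of the path with $l' = |V^o_H|$, and by the adjacency hypothesis any neighbour of $V^o_H$ in $P_n$ that does not lie in $V^o_H$ must be in $V^s_H$ and therefore carry value $0$ under $\hat{\vec 1}_{V^o_H}$. Writing the equation in \eqref{eq:eigen-system} at a degree-$2$ core vertex $i$ then forces $a_i - a_{i-1} = 2\mu$, so the $a_j$'s form an arithmetic progression of common difference $2\mu$ along the edges inside $V^o_H$.

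Next I would examine the two boundary contributions. Where $V^o_H$ meets a socket vertex, $\vec\phi$ jumps from $1/\vol(V^o_H) > 0$ down to $0$, forcing the bridging edge variable to equal $\pm 1$ (sign determined by orientation). Where an end of $V^o_H$ coincides with a degree-$1$ endpoint of $P_n$, the equation at that vertex reduces to $a = \pm\mu$. Telescoping the $2\mu$-increments between the two prescribed end values yields a single linear identity in $\mu$ that pins the eigenvalue down from $l'$ alone.

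In Case 1 both ends of $V^o_H$ are flanked by sockets, so the identity reads $1 - (-1) = 2l'\mu$, giving $\mu = 1/l'$; hence $\mu = 1/l$ forces $l' = l$ and the block $\{m,\ldots,m+l-1\}$ with $m>1$, $m+l\le n$. In Case 2, by the left-right symmetry of $P_n$ I may assume $1 \in V^o_H$ so $V^o_H = \{1,\ldots,k'+1\}$; telescoping from $a_1 = \mu$ to $a_{k'+1} = 1$ gives $(2k'+1)\mu = 1$, yielding $k' = k$ and $\mu = 1/(2k+1)$, and the case $n \in V^o_H$ is symmetric. For each ``if'' direction the arithmetic progression writes the $a_j$'s down explicitly, and since the endpoints already lie in $[-1,1]$ so do all intermediate values, making $(H,\hat{\vec 1}_{V^o_H})$ a genuine eigencomponent. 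The only real bookkeeping concern is keeping straight the sign convention $z_{ij} \in \sgn(x_i - x_j)$ versus the directed edge variable $a_j = z_{j,j+1}$; once this is fixed, the entire argument is a single telescoping computation.
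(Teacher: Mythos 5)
Your proposal is correct and follows essentially the same route as the paper: identify $V^o_H$ as a consecutive block, write the eigen-equations at the core vertices, and telescope the edge variables between the boundary values ($\pm1$ at a socket, $\pm\mu$ at a degree-one endpoint) to pin down $\mu$ as $1/l$ or $1/(2k+1)$, with the explicit arithmetic progression of $z$-values furnishing the converse. Your reformulation via $a_j=z_{j,j+1}$ and a common difference $2\mu$ is just a notational repackaging of the paper's summed system.
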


\begin{proof}
First we assume $V^o_H=\{m, m+1, \cdots, m+l\}$ with $m>1, m+l<n$. To the vector $\vec\phi=\hat{\vec 1}_{V^o_H}$, set $z_{m+i, m+i-1}=-1+\frac{2i}{l}, i=1,2, \cdots, l-1.$ Since $z_{m, m+1}=z_{m+l+1, m+l}=-1$, it is easy to verify that $\vec\phi$ satisfies the eigensystem with $\mu=\frac{1}{l}$ on $V^o_H$. Therefore $(H, \hat{\vec 1}_{V^o_H})$ is a $\frac{1}{l}$-eigencomponent. Similarly, the verification applies to $V^o_H=\{1, 2, \cdots, k+1\}$ or $V^o_H=\{n-k, n-k+1,  \cdots, n\}$.

Conversely, let $(H, \vec\phi)$ be a $\frac{1}{l}$-eigencomponent. Since $H$ is a connected subgraph of $P_n$, $E_H$ consists of edges with neighboring  vertices. Since $V_H^o$ is connected, vertices in $V_H^o$ must be permuted consecutively, i.e., $V_H^o=\{ m, m+1, \cdots, p\}$, where $m\ge 1, p\in \{m,m+1,\cdots,n\}$. Reversing the order of the vertices one obtains the same eigencomponent, so we only need to distinguish two cases: $m=1, p<n$ and $m>1, p<n$. 
It is assumed that $(H, \vec\phi)$ is an eigencomponent, so $\vec\phi=\hat{\vec 1}_{V_H^o}$.

If $m=1, p<n$,
then we have the system
\begin{equation*}\left\{\begin{array} {l}
z_{12}=\frac{1}{l} ,\\
z_{21}+z_{23}=\frac{2}{l},\\
\cdots\\
z_{p(p-1)}+z_{p(p+1)}=\frac{2}{l}.
\end{array}\right.
\end{equation*}
It follows
$$
1 = z_{p(p+1)}=\frac{2p-1}{l}.
$$
Therefore $l=2p-1$, let $k=p-1$, then $l=2k+1$ and $V^o_H=\{1, 2, \cdots, k+1\}$.

It remains to consider the case where $1<m\leq p<n$.
The eigensystem becomes
\begin{equation*}\left\{\begin{array} {l}
z_{m(m-1)}+z_{m(m+1)}= \frac{2}{l},\\
z_{(m+1)m}+z_{(m+1)(m+2)}=\frac{2}{l},\\
\cdots\\
z_{p(p-1)}+z_{p(p+1)}=\frac{2}{l}.
\end{array}\right.
\end{equation*}
It follows
$$ 2=\frac{2(p-m+1)}{l}.$$
Therefore, $p=m+l-1$, i.e., $V^o_H=\{m, m+1, \cdots, m+l-1\}$.

The results for $p=n$ can be referred to those for $m=1$ and   lead to $V^o_H=\{n-k, n-k+1, \cdots, n\}$ by reversing the order of the vertices. Thus the proof is completed.
\end{proof}

In Lemma \ref{lem:Pn}, there are obviously two types of eigencomponents, both denoted by $(H,\hat{\vec1}_{V^o_H})$:

\begin{equation}
\label{eq:I-II}
\begin{array}{ll}
(A).&V^o_H=\{1, 2, \cdots, k+1\},\,\, V^s_H=\{k+2\}; \text{ or } \\
&V^o_H=\{n-k, n-k+1,  \cdots, n\},\,\, V^s_H=\{n-k-1\};\\
(B).&V^o_H=\{m, m+1, \cdots, m+l-1\},\,\,V^s_H=\{m-1,m+l\},\,\,m>1, m+l\le n.
\end{array}
\end{equation}

Naturally, we ask:

{\sl Can these eigencomponents be extended to eigenvectors by extension on the whole graph $P_n$?}

The answer depends on how big $n$ is. More precisely, it depends on how big $m$ and $n-p$ are, where $p=m+l-1$. In fact, the following system holds
\begin{equation*}\left\{\begin{array} {l}
z_{12}\in \frac{1}{l}[-1, 1] ,\\
z_{21}+z_{23}\in \frac{2}{l}[-1, 1],\\
\cdots,\\
z_{(m-1)m}+z_{(m-1)(m-2)}=\frac{2}{l}[-1, 1],
\end{array}\right.
\end{equation*}
if $m\ge \frac{l+3}{2}$, i.e., $m\ge r+2$,
where $l=2r$ or $2r+1$. Therefore, for any $ r\in \mathbb{N}^+$,
$$
\vec\phi=\frac{1}{2r}(0,\cdots, 0,1,\cdots,1,0\cdots, 0),
$$
having $2r$ consecutive $1$ in the middle and two groups of $r+1$ consecutive $0$ on both sides,
is an eigenvector of $\Delta_1$ on $P_{4r+2}$ with $\mu=\frac{1}{2r}$. Applying Proposition \ref{pro:extension} to the module $(P_{4r+2},\vec\phi)$,
we are able to extend $(P_{4r+2},\vec\phi)$ to any $P_n$ with $n\ge 4r+2$ with the same eigenvalue $\mu=\frac{1}{2r}$. This also implies that the spectrum of $\Delta_1(P_n)$ given in \cite{Chang2015} is incomplete. It should be corrected as follows.

\begin{theorem}
The spectrum of $P_n$ is
$$\sigma(P_n)=\{0\}\cup \left\{\left.\frac{1}{2k+1}\,\right|\,\ 0\le k\le \left\lfloor\frac{n}{2}\right\rfloor-1\right\}\cup \left\{\left.\frac{1}{2r}\right|\, 1\le r\le \left\lfloor\frac{n-2}{4}\right\rfloor\right\}.$$
\end{theorem}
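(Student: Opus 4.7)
The plan is to characterize all connected subsets $D\subset V(P_n)$ such that $\hat{\vec1}_D$ is an eigenvector of $\Delta_1$; by Theorem~\ref{th:equivalent-binary} (equivalently Corollary~\ref{cor:nodal-eigenvector}) this catalogs every value in $\sigma(P_n)$. Since a connected subset of a path is an integer interval, write $D=\{m,m+1,\ldots,m+l-1\}$ and take $\vec\phi=\hat{\vec1}_D$.

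To make the eigensystem tractable, introduce the edge variables $a_v:=z_{v,v+1}$ for $v=1,\ldots,n-1$. System~\eqref{eq:eigen-system} then becomes a one-dimensional discrete boundary value problem:
\begin{align*}
a_1&\in \mu\,\sgn(\phi_1), & -a_{n-1}&\in\mu\,\sgn(\phi_n),\\
a_v-a_{v-1}&\in 2\mu\,\sgn(\phi_v)\ \ (2\le v\le n-1), & a_v&\in \sgn(\phi_v-\phi_{v+1})\cap[-1,1].
\end{align*}
On the support $D$ the signs $\sgn(\phi_v)$ are all $+1$, while the signs of the edges at the block boundary of $D$ are forced to $\pm 1$, so telescoping $a_v-a_{v-1}=2\mu$ over $v\in D$ fixes $\mu$ in exactly one of three ways: if $D=V$, then $\vec\phi$ is constant and $\mu=0$; if $D$ is a genuine end block (say $m=1$, $l<n$), then $a_1=\mu$ (endpoint equation) and $a_l=1$ (sign jump at the right edge) give $\mu=1/(2l-1)$; if $D$ is a strict interior block ($1<m$, $m+l-1<n$), then $a_{m-1}=-1$ and $a_{m+l-1}=1$ give $\mu=1/l$.

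What remains is a feasibility question: can the free $a_v$ on $V\setminus D$ (where $\phi_v=0$) be chosen so that every remaining inequality holds? This is a discrete bounded-slope reachability problem---starting from the boundary value $\pm 1$ just outside $D$, with per-step increments of magnitude at most $2\mu$ and every $|a_v|\le 1$, one must arrive at a terminal value in $[-\mu,\mu]$ at the graph endpoint. I expect this to be the main computational step; it is routine but the two asymmetric zero regions of an interior block must each be handled. A direct count produces the feasibility thresholds: an end block of length $l$ requires $n\ge 2l$; an interior block of length $l=2r$ requires $n\ge 4r+2$ (the pattern already identified in the paragraph preceding the theorem); an interior block of length $l=2r+1$ requires $n\ge 4r+3$.

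Assembling: end blocks give $\mu=1/(2k+1)$ for $l=k+1\le\lfloor n/2\rfloor$, i.e.\ $k\le\lfloor n/2\rfloor-1$; interior blocks of even length $l=2r$ give $\mu=1/(2r)$ for $r\le\lfloor(n-2)/4\rfloor$; interior blocks of odd length $l=2r+1$ reproduce the value $1/(2r+1)$, but the end-block threshold $n\ge 2r+2$ is always weaker than the interior threshold $n\ge 4r+3$, so no new eigenvalue arises. Together with $\mu=0$ from $D=V$, this yields exactly the three families listed in the statement, completing the proof.
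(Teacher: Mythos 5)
Your proposal is correct and follows essentially the same route as the paper: reduce to binary eigenvectors supported on integer intervals via Theorem~\ref{th:equivalent-binary}, telescope the edge equations to force $\mu=1/(2l-1)$ for end blocks and $\mu=1/l$ for interior blocks (this is exactly Lemma~\ref{lem:Pn}), and then decide when the edge variables can be continued through the zero tails (the paper's condition $m\ge\frac{l+3}{2}$). The feasibility thresholds you state without full derivation --- $n\ge 2l$ for end blocks, $n\ge 4r+2$ for interior blocks of length $2r$, $n\ge 4r+3$ for interior blocks of length $2r+1$ --- are all correct and agree with the paper's counts, and your observation that odd interior blocks contribute no new eigenvalues closes the argument.
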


By joining, pasting and plugging the two types of eigencomponents, $(A)$ and $(B)$, given in Eq.~\eqref{eq:I-II}, we obtain various modules in $P_n$.

\begin{theorem}
\label{th:nodal-Pn}
We have
\begin{enumerate}[(1)]
\item
\begin{equation}\label{eq:Pn-nodal-1}
\nu(\frac{1}{2r+1}, P_n)=\left\lfloor\frac{n+2r}{2r+1}\right\rfloor,\,\,\, n\ge 2r+2,\,\,\,r=0,1,2,\cdots, \left\lfloor\frac{n}{2}\right\rfloor-1,
\end{equation}
and $\forall\, r, \,\exists\, \vec\phi\in \mathcal{K}_{\frac{1}{2r+1}}$ such that $S(\vec\phi)= \nu(\frac{1}{2r+1}, P_n)$. Such $\vec\phi$ can be constructed by type (A) eigencomponents on both ends and type (B) eigencomponents in between.
\item \begin{equation}\label{eq:Pn-nodal-2}
\nu(\frac{1}{2r}, P_n)=\left\lfloor\frac{n-2}{2r}\right\rfloor-1,\,\,\, n\ge 4r+2,\,\,\,r=1,2,\cdots, \left\lfloor\frac{n-2}{4}\right\rfloor,
\end{equation}
and $\forall\, r, \,\exists\, \vec\phi\in \mathcal{K}_{\frac{1}{2r}}$ such that $S(\vec\phi)=\nu(\frac{1}{2r}, P_n)$. Such $\vec\phi$ can be constructed by $\{0,0,\cdots,0\}$ on both ends and type (B) eigencomponents in between.
\end{enumerate}
\end{theorem}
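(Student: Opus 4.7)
The plan is to combine a classification-and-counting argument for the upper bound with an explicit construction for the lower bound. Let $\vec\phi \in \mathcal{K}_\mu$ be any eigenvector with $k$ strong nodal domains $D_1, \ldots, D_k$ ordered along the path. Theorem \ref{th:equivalent-binary} yields that each $\hat{\vec 1}_{D_i}$ is itself an eigenvector of $P_n$ with the same eigenvalue $\mu$, and then Lemma \ref{lem:Pn} (applied in the induced subgraph on $D_i$ together with its neighbors) forces each $D_i$ to be a consecutive block of type (A) or type (B) as classified in \eqref{eq:I-II}, of size $r+1$ (type A) for $\mu = 1/(2r+1)$, or size $2r+1$ respectively $2r$ (type B) at $\mu = 1/(2r+1)$ respectively $\mu = 1/(2r)$.

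Next I promote this local classification to the global statement that $\hat{\vec 1}_{D_i}$ is an eigenvector of all of $P_n$, not merely a $\mu$-eigencomponent. Since $\hat{\vec 1}_{D_i}$ vanishes outside $D_i$, the value $z_{j,j+1} = \pm 1$ is forced at the nodal boundary, and this value must be reconciled with the endpoint constraint $|z_{1,2}| \le \mu$ (and $|z_{n-1,n}| \le \mu$) by propagation across the zero region, where each interior zero vertex of degree two permits a step of size at most $2\mu$ in the $z$-values. A short calculation shows the boundary value $\mp 1$ can be brought inside $[-\mu,\mu]$ only after at least $r+1$ buffer zeros when $\mu = 1/(2r)$, and likewise for a type (B) block at $\mu = 1/(2r+1)$. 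Consequently any interior type (B) nodal domain satisfies $m_i \ge r+2$ and $m_i + |D_i| - 1 \le n - r - 1$, whereas type (A) domains sit flush at an endpoint and need no buffer.

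Combining these size and position constraints---and using that two adjacent opposite-sign blocks may sit flush while two adjacent same-sign blocks require an intervening zero---the tightest packing is type (A) at both ends with $k-2$ type (B) blocks alternating in between (case (1)) or only type (B) blocks flanked by $r+1$ zeros at each end (case (2)). Vertex counting yields $2(r+1) + (k-2)(2r+1) \le n$ in case (1) and $2rk + 2(r+1) \le n$ in case (2); solving for $k$ and taking floors gives exactly \eqref{eq:Pn-nodal-1} and \eqref{eq:Pn-nodal-2}.

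For the matching lower bound I realize the optimum layout as an explicit eigenvector: in case (1), combine one type (A) eigencomponent at each endpoint with $k-2$ type (B) eigencomponents in the interior via the joining/pasting/plugging operations of Section \ref{sec:package}, alternating signs (compare $\mathcal{M}_1 \plug \mathcal{M}_4$ in Example \ref{exam:P6} and $\mathcal{M}_4 \plug \mathcal{M}_4$ in Example \ref{exam:P5plugtoC6}); in case (2), analogously combine $k$ type (B) eigencomponents with $r+1$ zeros on each end, using Proposition \ref{pro:extension} to pad if necessary. Consistency of the $z_{ij}$ assignments at the junctions follows from the formulas derived in the proof of Lemma \ref{lem:Pn}. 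The main technical obstacle will be the careful propagation bookkeeping in the promotion step above, which pins down the buffer requirement $m_i \ge r+2$; once that is in hand, both directions become a matter of routine vertex counting and the package techniques of Section \ref{sec:package}.
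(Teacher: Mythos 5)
Your proposal follows the paper's own proof essentially step for step: Theorem \ref{th:equivalent-binary} together with Lemma \ref{lem:Pn} to classify each nodal domain as a type (A) or type (B) block of fixed size, the $m\ge r+2$ buffer computation for the zeros separating a type (B) block from a path endpoint, the identical counting inequalities $2(r+1)+(2r+1)(k-2)\le n$ and $2(r+1)+2rk\le n$, and the same plug/paste constructions realizing the bound. This is correct and is essentially the same argument as in the paper.
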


\begin{proof}
According to Theorem \ref{th:equivalent-binary}, each eigenvector $\vec\phi\in \mathcal{K}_\frac{1}{2r+1}$ is homotopic to a $\frac{1}{2r+1}$-module. All these modules are subgraphs of $P_n$. If such a module contains an end vertex $\{1\}$ or $\{n\}$, then it is of type (A), otherwise of type (B). Let $k$ be the number of nodal domains of the eigenvector $\vec\phi$. Applying Proposition \ref{pro:mu-eigencomponent} and Lemma \ref{lem:Pn}, a nodal domain containing an end vertex, contains $r+1$ vertices, then it must be a type (A) module; otherwise, it contains $2r+1$ vertices and then it must be a type (B) module.
In consequence, we have
$$
2(r+1)+(2r+1)(k-2)\le n\,\,\,
\Rightarrow\,\,\,
 k\le \frac{n+2r}{2r+1}.$$
Therefore
\begin{equation}\label{eq:nu1}
\nu(\frac{1}{2r+1}, P_n)\le \left\lfloor\frac{n+2r}{2r+1}\right\rfloor.
\end{equation}

On the other hand, it is sufficient to construct an eigenvector $\vec\phi$ in $\mathcal{K}_\frac{1}{2r+1}$ such that $S(\vec\phi) = \left\lfloor\frac{n+2r}{2r+1}\right\rfloor$.
The construction is as follows:
\begin{equation}\label{eq:block1}
(A) \plug (B) \plug  (B) \plug  (B) \plug \cdots\plug (B)\paste\{0, \cdots, 0\}\paste (A),
\end{equation}
where the notations (A) and (B) represent the types of eigencomponents given in Eq.~\eqref{eq:I-II},
the number of $0$ in $\{0, \cdots, 0\}$ equals to
$n+2r-(2r+1)\left\lfloor\frac{n+2r}{2r+1}\right\rfloor$.
It is important to note that the signs of the eigenvector $\vec\phi$ are different on both sides of the plugging modules. And it also has different sign
before and after $\{0, \cdots, 0\}$.
In short, the signs of the eigenvector $\vec\phi$ change alternatively from block to block except for $\{0, \cdots, 0\}$.
It can be easily seen that the resulting vector in Eq.~\eqref{eq:block1} is an eigenvector of $\Delta_1$ on $P_n$ and the corresponding eigenvalue is $1/(2r+1)$. Connecting with Eq.~\eqref{eq:nu1}, the first part of Theorem \ref{th:nodal-Pn} is proved.


Let $k$ be the number of nodal domains of the eigenvector $\vec\phi$ corresponding to the eigenvalue ${1}/{2r}$. A similar discussion to the above leads to
\[
2(r+1)+k2r\le n
\,\,\,
\Rightarrow\,\,\,
k\le \left\lfloor\frac{n-2}{2r}\right\rfloor-1,
\]
and the construction
$$
\{0, \cdots, 0\}\paste (B) \plug  (B) \plug  (B) \plug \cdots\plug (B)\paste\{0, \cdots, 0\}.
$$
Therefore the second part of Theorem \ref{th:nodal-Pn} is proved.
\end{proof}

In order to make an intuitive picture of Theorem \ref{th:nodal-Pn} for the readers,
we list some cases for typical eigenvalues below and  cartoon the structure of corresponding eigenvectors in Fig.~\ref{fig:Pn-most-nodal}.
\begin{align}
\nu(1, P_n)&=n, \,\,\,\,\,\,\,\,\,\,\,\,\,\,\,\,\,\,\,\,\,\,\,\,n\ge 2;\,\,\,\,\nu(\frac12 , P_n)=\left\lfloor\frac{n-2}{2}\right\rfloor-1 \,\,\,\,n\ge 6;\nonumber\\
\nu(\frac13 , P_n)&=\left\lfloor\frac{n+2}{3}\right\rfloor, \,\,\,\,n\ge 4;\,\,\,\,\nu(\frac14 , P_n)=\left\lfloor\frac{n-2}{4}\right\rfloor-1 \,\,\,\,n\ge 10;\\
\nu(\frac15 , P_n)&=\left\lfloor\frac{n+4}{5}\right\rfloor, \,\,\,\,n\ge 6;\,\,\,\,\nu(\frac16 , P_n)=\left\lfloor\frac{n-2}{6}\right\rfloor-1 \,\,\,\,n\ge 14.\nonumber
\end{align}


\begin{figure}
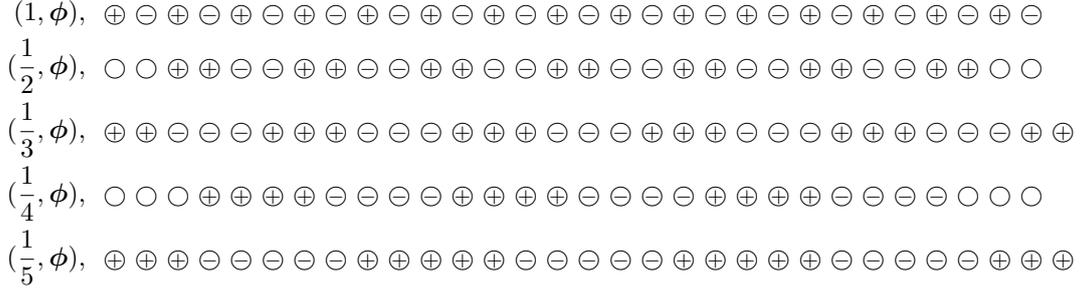

\begin{align*}
(1,\vec\phi),\,\,\,&\cirplus\cirminus\cirplus\cirminus\cirplus\cirminus\cirplus\cirminus\cirplus\cirminus\cirplus\cirminus\cirplus\cirminus\cirplus\cirminus\cirplus\cirminus\cirplus\cirminus\cirplus\cirminus
\cirplus\cirminus\cirplus\cirminus\cirplus\cirminus\cirplus\cirminus \\
(\frac12,\vec\phi),\,\,\,&\cir\cir\cirplus\cirplus\cirminus\cirminus\cirplus\cirplus\cirminus\cirminus\cirplus\cirplus\cirminus\cirminus\cirplus\cirplus\cirminus\cirminus\cirplus\cirplus\cirminus
\cirminus\cirplus\cirplus\cirminus\cirminus\cirplus\cirplus\cir\cir \\
(\frac13,\vec\phi),\,\,\,&\cirplus\cirplus\cirminus\cirminus\cirminus\cirplus\cirplus\cirplus\cirminus\cirminus\cirminus\cirplus\cirplus\cirplus\cirminus\cirminus\cirminus\cirplus\cirplus\cirplus\cirminus
\cirminus\cirminus\cirplus\cirplus\cirplus\cirminus\cirminus\cirminus\cirplus\cirplus \\
(\frac14,\vec\phi),\,\,\,&\cir\cir\cir\cirplus\cirplus\cirplus\cirplus\cirminus\cirminus\cirminus\cirminus\cirplus\cirplus\cirplus\cirplus\cirminus\cirminus\cirminus\cirminus
\cirplus\cirplus\cirplus\cirplus\cirminus\cirminus\cirminus\cirminus\cir\cir\cir \\
(\frac15,\vec\phi),\,\,\,&\cirplus\cirplus\cirplus\cirminus\cirminus\cirminus\cirminus\cirminus\cirplus\cirplus\cirplus\cirplus\cirplus\cirminus\cirminus\cirminus\cirminus\cirminus\cirplus\cirplus\cirplus\cirplus\cirplus
\cirminus\cirminus\cirminus\cirminus\cirminus\cirplus\cirplus\cirplus
\end{align*}
\caption{\small Eigenvectors of $\Delta_1(P_n)$ corresponding to $\mu=1,\frac12,\frac13,\frac14,\frac15$. All these eigenvectors
possess the largest possible number of strong nodal domains.}
\label{fig:Pn-most-nodal}
\end{figure}

\subsection{Cycle graphs $C_n$}

Now we turn to study cycle graphs $C_n$. It has been known that \cite{Chang2015}
$$
\sigma(C_n)=\left\{0, \frac{1}{\left\lfloor\frac{n}{2}\right\rfloor}, \cdots, \frac12, 1\right\}.
$$
Similarly, we have
\begin{lemma}
\label{lemma:Cn}
Let $H=(V_H,E_H)\subset C_n$ be a subgraph and $r\le \left\lfloor\frac n2\right\rfloor$.  Then $(H,\vec\phi)$ is a $\frac{1}{r}$-eigencomponent if and only if it is of the form:
\begin{align*}
V_H&=\{1,2, \cdots, r,\cdots, k\}, \,\, E_H=\{(12), (23), \cdots, ((k-1) k)\},\,\,k>r+1, \\
V_H^o&=\{m+1, m+2, \cdots, m+r\}, \,\,V_H^s=\{1,2, \cdots, m, m+r+1, \cdots, k\}, \,\,\phi=\hat{\vec 1}_{V_H^o},\,\,m>0.
\end{align*}
It should be noted that circulating the order of the vertices yields the same eigencomponent on $C_n$.
\end{lemma}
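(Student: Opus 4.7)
The plan is to adapt the proof of Lemma~\ref{lem:Pn} to the cyclic setting, using that any connected proper subset of $C_n$ is a consecutive arc. Both directions reduce to a one-line summation over the $r$ vertices of $V_H^o$, and the hypothesis $r\le\lfloor n/2\rfloor$ ensures that a consecutive arc of length $r$ together with two distinct adjacent vertices fits inside $C_n$.

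For sufficiency, I would take the prescribed $V_H,V_H^o,V_H^s$ and verify the system of Definition~\ref{def:module}(2) at each core vertex by an explicit flow assignment. The two boundary edges $(m+1,m)$ and $(m+r,m+r+1)$ have $z=+1$ pinned by the signum, since the socket endpoints satisfy $\phi=0$ while the adjacent core endpoints satisfy $\phi>0$. On the interior edges I would set $z_{m+i,m+i+1}=2i/r-1$ for $i=1,\ldots,r-1$; this yields $\sum_{j\sim v}z_{vj}=2/r=\mu d_v$ at every $v\in V_H^o$ and each value lies in $[-1,1]$. This is formally the cyclic analogue of the Type-(B) construction in Eq.~\eqref{eq:I-II}.

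For necessity, suppose $(H,\hat{\vec 1}_{V_H^o})$ is a $\frac{1}{r}$-eigencomponent. Connectedness of $V_H^o$ in the cycle forces it to be either a consecutive arc $\{m+1,\ldots,m+r'\}$ or the whole vertex set $V(C_n)$. Summing the identity $\sum_{j\sim i}z_{ij}=2/r$ over $i\in V_H^o$, every edge with both endpoints in $V_H^o$ cancels through $z_{ij}+z_{ji}=0$. If $V_H^o=V(C_n)$ nothing remains on the left while the right equals $2r'/r>0$, a contradiction. Otherwise only the two edges from $V_H^o$ to its cyclic neighbors $m$ and $m+r'+1$ survive, and on each the signum pins $z=+1$, because $\phi>0$ at the core endpoint and $\phi=0$ at the socket endpoint. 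The resulting identity $2=2r'/r$ yields $r'=r$. The remaining structural assertions (that $V_H$ is a path arc containing $V_H^o$ together with both adjacent vertices, $E_H$ is the consecutive edges, and $V_H^s$ is the leftover tail) then follow from $H$ being a connected subgraph of $C_n$ with every socket vertex adjacent to the core.

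The main obstacle will be the bookkeeping in the summation step: correctly isolating the two surviving boundary contributions after cancellation and verifying that the signum forces each of them to $+1$. Secondary attention is needed for the degenerate case $r=1$, where the single-vertex core has both neighbors in the socket and the identity $2=2/1$ holds at once, and for the border case $r=\lfloor n/2\rfloor$ with $n$ even, where the two surviving socket neighbors are nearly antipodal but still distinct, so the bookkeeping is unaffected.
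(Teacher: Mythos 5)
Your proposal is correct and follows essentially the same route as the paper, which simply defers to the proof of Lemma \ref{lem:Pn} and notes that type (A) eigencomponents cannot occur on a cycle; your explicit flow assignment $z_{m+i,m+i+1}=2i/r-1$ and the telescoping/summation identity $2=2r'/r$ are exactly the computations underlying that proof, transplanted to the degree-$2$ setting. The only implicit point worth keeping in mind is that, as in Lemma \ref{lem:Pn}, one tacitly assumes $V_H^o$ is not adjacent to $V(C_n)\setminus V_H$, so that both cyclic neighbors of the core arc lie in the socket and both boundary edges indeed survive the cancellation.
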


\begin{proof}
The proof is almost the same as that of Lemma \ref{lem:Pn}. In fact, the only difference lies on the fact that the type $(A)$ eigencomponent in Eq.~\eqref{eq:I-II} does not occur in cycle graphs.
\end{proof}

\begin{theorem}
\label{th:nodal-Cn}
We have
\begin{equation}\label{eq:nodal-Cn}
\nu(\frac{1}{r}, C_n)=2\left\lfloor\frac{n}{2r}\right\rfloor, \,\,\,n\ge 2r,\,\,\,r=1,2,\cdots, \left\lfloor\frac{n}{2}\right\rfloor,
\end{equation}
and $\forall\ r,\,\exists\,\vec\phi \in \mathcal{K}_\frac{1}{r}$ such that $S(\vec\phi)=\nu(\frac{1}{r}, C_n)$.
\end{theorem}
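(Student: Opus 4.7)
I will follow the template of Theorem~\ref{th:nodal-Pn}: first determine the structure of the $\frac{1}{r}$-eigencomponents available on $C_n$, then bound how many can fit around the cycle, and finally realise the maximum via the packaging techniques of Section~\ref{subsec:package}. Let $\vec\phi$ be a $\frac{1}{r}$-eigenvector on $C_n$ with strong nodal domains $D_1,\dots,D_k$ listed cyclically and signs $s_1,\dots,s_k\in\{+1,-1\}$. By Corollary~\ref{cor:nodal-eigenvector} each $\hat{\vec 1}_{D_i}$ is itself an eigenvector of $C_n$, and the telescoping calculation underlying Lemma~\ref{lemma:Cn}, namely $-z_{j-1}+z_j=2/r$ on interior vertices of $D_i$ with boundary values forced by the adjacent zeros, yields $|D_i|=r$ and $D_i$ a block of $r$ consecutive vertices. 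Writing $g_i\ge 0$ for the number of zero vertices separating $D_i$ from $D_{i+1}$ on the cycle, this gives the identity $n=kr+\sum_i g_i$, and the task reduces to bounding $k$ in terms of the admissible $g_i$.

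\textbf{Key step: the gap inequality.} Consider a gap of length $g:=g_i$ with vertices $w_1,\dots,w_g$ bordered by $w_0\in D_i$ and $w_{g+1}\in D_{i+1}$, and set $\zeta_j=z_{w_j,w_{j+1}}$. The sign condition pins $\zeta_0=s_i$ and $\zeta_g=-s_{i+1}$, while the eigensystem at each interior zero $w_j$ gives $-\zeta_{j-1}+\zeta_j\in[-2/r,2/r]$. Telescoping yields $|\zeta_g-\zeta_0|\le 2g/r$; when $s_i=s_{i+1}$ the left side equals $2$ and forces $g\ge r$, whereas when $s_i\ne s_{i+1}$ it equals $0$ and imposes no restriction. (The edge case $g=0$ forces $s_i\ne s_{i+1}$, else the two domains coalesce into one.) This inequality is the principal obstacle; all other ingredients are either standard or direct applications of the machinery of Section~\ref{subsec:package}.

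\textbf{Parity bound and construction.} Traversing the cycle once, the sign sequence $s_1,\dots,s_k$ must exhibit an even number of flips, so when $k$ is odd at least one adjacent pair has $s_i=s_{i+1}$ and costs $\ge r$ extra zero vertices, giving $(k+1)r\le n$ and $k\le \lfloor n/r\rfloor-1$. When $k$ is even only $kr\le n$ is required, hence $k\le 2\lfloor n/(2r)\rfloor$. Writing $n=2qr+s$ with $0\le s<2r$ and comparing both cases shows the overall maximum is $2q=2\lfloor n/(2r)\rfloor$, attained by an even $k$. For the matching construction, take $k=2\lfloor n/(2r)\rfloor$, lay $k$ equal-magnitude blocks of length $r$ with strictly alternating signs around $C_n$, and deposit the remaining $s=n-kr$ zero vertices in a single opposite-sign gap. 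This vector is produced by iteratively plugging (Proposition~\ref{pro:plug}) the $k$ type-(B) eigencomponents from Lemma~\ref{lemma:Cn} and, when $s>0$, extending (Proposition~\ref{pro:extension}) across the block of $s$ zeros; the outcome is a $\frac{1}{r}$-eigenvector of $C_n$ with exactly $2\lfloor n/(2r)\rfloor$ strong nodal domains, proving $\nu(\frac{1}{r},C_n)=2\lfloor n/(2r)\rfloor$.
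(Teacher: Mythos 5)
Your proof is correct, and while it follows the same overall skeleton as the paper's argument (use Lemma~\ref{lemma:Cn} to pin every nodal domain down to a block of exactly $r$ consecutive vertices, bound the number of blocks, then realise the bound by an explicit alternating construction), your treatment of the upper bound is genuinely different and in fact more rigorous. The paper disposes of the upper bound by asserting that $\frac1r$-eigencomponents on $C_n$ ``must appear in pairs'', so that the number of nodal domains is even and $2kr\le n$ follows at once; but that intermediate assertion is false for general eigenvectors: on $C_6$ with $\mu=1$ the vector $\frac16(1,0,1,0,1,0)$ is an eigenvector with three strong nodal domains (this does not contradict the theorem, since $3<\nu(1,C_6)=6$, but it does defeat the paper's parity claim). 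Your gap inequality --- $g\ge r$ between consecutive same-sign domains, obtained by telescoping the eigen-equations $-\zeta_{j-1}+\zeta_j\in[-\frac2r,\frac2r]$ across the intervening zero vertices --- combined with the observation that the number of sign flips around a cycle is always even, supplies exactly the missing step: an odd number of domains forces at least one ``expensive'' gap, so odd $k$ obeys $(k+1)r\le n$ and can never exceed the even-$k$ maximum $2\left\lfloor\frac{n}{2r}\right\rfloor$. Your construction (alternating $\pm$ blocks of length $r$ with all $n-2\lfloor n/(2r)\rfloor r$ surplus zeros deposited in a single opposite-sign gap) coincides with the paper's packaging $(+B)\plug(-B)\plug\cdots\plug(-B)\paste\{0,\dots,0\}\paste(+B)$, and the feasibility check there is routine. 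In short: same decomposition and same construction, but your gap-inequality-plus-parity lemma repairs a real lacuna in the published upper-bound argument.
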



\begin{proof}
The proof is also similar to that of Theorem \ref{th:nodal-Pn}. The differences are as follows:
\begin{enumerate}
\item The spectrum of $\Delta_1(C_n)$ contains not only $\{\frac{1}{2r+1}\}$ but also $\{\frac{1}{2r} \}$.
\item Although the type $(A)$ eigencomponent does not occur in $C_n$, any eigencomponent on $C_n$ can be constructed by connecting the two type $(A)$ eigencomponents on $P_n$.
\end{enumerate}

To achieve the largest number of nodal domains,
we only consider the eigenvectors, constructed by joining, pasting, and plugging eigencomponents, have different signs on both sides.
Moreover, in view of the second difference above,
$\frac1r$-eigencomponents must appear in pairs,
and thus the number of nodal domains of an eigenvector must be even. Let an eigenvector $\vec\phi$ corresponding to $\mu=\frac1r$ have
$2k$ nodal domains. According to Lemma \ref{lemma:Cn}, we have
\[
2kr\le n \,\,\,
\Rightarrow \,\,\,
k\leq \left\lfloor\frac{n}{2r}\right\rfloor
\,\,\,
\Rightarrow
\,\,\,
2k\leq 2 \left \lfloor\frac{n}{2r}\right\rfloor,
\]
and then
$$
\nu(\frac{1}{r}, C_n)\le 2\left\lfloor\frac{n}{2r}\right\rfloor.
$$
On the other hand, it is sufficient to construct an eigenvector $\vec\phi$ in $\mathcal{K}_\frac{1}{r}$ such that $S(\vec\phi) = 2\left\lfloor\frac{n}{2r}\right\rfloor$. The construction is:
$$
(+B)\plug(-B)\plug(+B)\plug\cdots\plug(-B)\paste\{0, \cdots, 0\} \paste(+B),
$$
where the two end eigencomponents are the same,
the numbers of $\{0, \cdots, 0\}$ are in $[0, 2r-1]$ if $n$ is even, and in $[1, 2r]$ if $n$ is odd,
and $(\pm B)$ means the type $(B)$ eigencomponent with $\pm$ sign.
\end{proof}

Below we show some cases for typical eigenvalues in Theorem \ref{th:nodal-Cn}.
\begin{align*}
\nu(1, C_n)&=n, \;\;\;\;\;\;\,\,\,\,\,\,n\ge 2;\\
\nu(\frac12 , C_n)&=2\left\lfloor\frac{n}{4}\right\rfloor, \,\,\,\,n\ge 4;\\
\nu(\frac13 , C_n)&=2\left\lfloor\frac{n}{6}\right\rfloor, \,\,\,\,n\ge 6.
\end{align*}


\begin{remark}\rm
It is interesting to note that Theorems \ref{th:nodal-Pn} and \ref{th:nodal-Cn} for $P_n$ and $C_n$, respectively, are the counterparts of the Sturm-Liouville oscillation theorem in ODE, of the oscillatory eigenfunctions for $1$-Laplacian on intervals and circles \cite{Chang2009}, and of
the oscillatory eigenfunctions for standard Laplacian on $P_n$ in the linear spectral graph theory \cite{GantmacherKrein2002}.
\end{remark}

\subsection{Complete graphs $K_n$}
\label{sec:Kn}

Finally, we turn to study complete graphs $K_n$. The spectrum $\sigma(K_n)$ has been obtained in \cite{Chang2015}. But the proof there is too sketchy,
we rewrite here the proof in details in virtue of the new phenomena described in Theorem \ref{th:equivalent-binary}.

\begin{pro}
\label{pro:spectrum-Kn}
The spectrum of $K_n$ is
\begin{equation*}
\sigma(K_n)=\left\{\begin{array} {l}
\{ 0,\frac{n+1}{2(n-1)}, \cdots, \frac{n-2}{n-1}, 1\},\,\,\,\,\, n=3,5,7, \cdots,\\
\{ 0,\frac{n}{2(n-1)}, \cdots, \frac{n-2}{n-1}, 1\},\,\,\,\,\, n=2,4,6, \cdots.
\end{array}\right.
\end{equation*}
In particular, if $\hat{\vec 1}_D$ is an eigenvector with eigenvalue $\mu$, then $\mu$ is uniquely determined by the cardinal number of $D$.
\end{pro}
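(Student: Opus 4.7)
The plan is to use Theorem \ref{th:equivalent-binary} to reduce the problem to binary vectors: since every eigenvalue of $\Delta_1$ on $K_n$ is realized by some $\hat{\vec 1}_D$, we have
\[
\sigma(K_n)=\{\mu:\ \hat{\vec 1}_D\text{ is an eigenvector of }\Delta_1\text{ for some nonempty }D\subseteq V\},
\]
and the ``uniqueness by cardinality'' assertion will emerge from the same computation.

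First I write out the eigen-system \eqref{eq:eigen-system} for $\vec\phi=\hat{\vec 1}_D$ on $K_n$, using $d_i=n-1$ and the full adjacency. Let $k=|D|$. For $i\in D$, the relation $\phi_i-\phi_j>0$ whenever $j\notin D$ forces $z_{ij}=1$, while $z_{ij}\in[-1,1]$ for $j\in D\setminus\{i\}$, so the equation at $i\in D$ reads
\[
(n-k)+\sum_{j\in D\setminus\{i\}} z_{ij}=\mu(n-1).
\]
Summing over $i\in D$ and cancelling the internal double sum by antisymmetry yields $k(n-k)=k\mu(n-1)$, hence $\mu=(n-k)/(n-1)$ (with $\mu=0$ when $k=n$). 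This already gives the uniqueness-of-$\mu$ statement.

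Next I determine the admissible range of $k$ and exhibit slack variables. Inside $D$ the choice $z_{ij}=0$ is antisymmetric and meets the required row-sum $\mu(n-1)-(n-k)=0$. At a vertex $i\notin D$, $z_{ij}=-1$ is forced for $j\in D$, and the eigen-inequality becomes
\[
\sum_{j\notin D,\,j\neq i} z_{ij}\in[2k-n,\,n].
\]
Row sums of any antisymmetric array on $V\setminus D$ add up to $0$, so requiring each to be $\ge 2k-n$ forces $(n-k)(2k-n)\le 0$, i.e., $k\le n/2$; conversely, when $k\le n/2$ the zero assignment satisfies the inequality. (The same bound also follows immediately from Corollary \ref{cor:delta+<delta}, since $\vol(D)=k(n-1)$ and $\vol(V)=n(n-1)$.) Therefore $\hat{\vec 1}_D$ is an eigenvector exactly when $D=V$ (yielding $\mu=0$) or $1\le k\le\lfloor n/2\rfloor$ (yielding $\mu=(n-k)/(n-1)$).

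Finally, substituting $k=1,2,\ldots,\lfloor n/2\rfloor$ into $(n-k)/(n-1)$ and splitting by the parity of $n$ produces exactly the two lists in the proposition, the smallest nonzero eigenvalue being $(n+1)/(2(n-1))$ when $n$ is odd (attained at $k=(n-1)/2$) and $n/(2(n-1))$ when $n$ is even (attained at $k=n/2$). The only delicate point is the antisymmetric-row-sum obstruction that rules out $k>n/2$; everything else is direct bookkeeping with $d_i=n-1$.
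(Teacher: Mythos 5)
Your proposal is correct and follows essentially the same route as the paper: reduce to binary vectors $\hat{\vec 1}_D$ via Theorem \ref{th:equivalent-binary}, derive $\mu=\frac{n-k}{n-1}$ with $k=|D|$, and restrict to $k\le n/2$ before enumerating by parity. Your self-contained antisymmetric row-sum argument for the necessity of $k\le n/2$ is a minor refinement; the paper obtains the same bound directly from Corollary \ref{cor:delta+<delta}.
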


\begin{proof}
Firstly, we determine the eigenvalues.
According to the corollaries of Theorem \ref{th:equivalent-binary}, we may restrict ourselves to eigenvectors of the form $\hat{\vec 1}_D$, where $D$ is a connected subset and $\vol(D)\le \frac12 \vol(V)$. Since $d_1=\cdots =d_n=n-1$, one may reduce the volume $\vol(D)$ to the cardinal number $|D|$:
$\vol(D) = (n-1) |D|$. Accordingly, we only need to consider $\hat{\vec 1}_D$ with $|D|\le \frac n2$.
$\forall\, D,$ we shall prove:  $\vec\phi:=\hat{\vec 1}_D$ with $k:=|D|\le \frac n2$ is an eigenvector.
In fact, let $\mu_k=\frac{n-k}{n-1}$, $z_{ij}=0$ if $\phi_i=\phi_j$, we have
\begin{equation*}\begin{array} {l}
\sum_{j\sim i}z_{ij}(\vec\phi)=\sum_{j\neq i}z_{ij}(\hat{\vec 1}_D)=n-k=\mu_k (n-1)\sign(\phi_k),\,\,\,\forall\, i\in D,\\
\sum_{j\sim i}z_{ij}(\vec\phi)=-k \in \mu_k (n-1) [-1,1]=\mu_k (n-1)\sgn(\phi_k),\,\,\,\forall\, i\notin D,
\end{array}
\end{equation*}
provided $k\le n/2$. This proves that
$$
\sigma(K_n)=\{0\}\cup\{\frac{n-k}{n-1},\,\,,k=1,2,\cdots, \left\lfloor\frac n2\right\rfloor\}= \{0, \frac{1}{n-1}\left\lfloor\frac{n+1}{2}\right\rfloor, \cdots, \frac{n-2}{n-1}, 1\}.
$$
\end{proof}


\begin{theorem}
For any $n\in\{2,3,\cdots\}$ and any $\mu=\frac{n-k}{n-1}\in\sigma(K_n)$, we have
\begin{equation*}
\nu(\mu, K_n)=2.
\end{equation*}
\end{theorem}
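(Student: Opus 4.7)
The plan is to prove $\nu(\mu,K_n)=2$ by establishing the upper bound $\nu(\mu,K_n)\le 2$ from the completeness of $K_n$, then matching it with an explicit two-nodal-domain eigenvector.

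For the upper bound, I would exploit the fact that every induced subgraph of $K_n$ is again complete, hence connected. Let $\vec\phi$ be any eigenvector with eigenvalue $\mu$. The subgraph induced by $D^+(\vec\phi)$ (if nonempty) is complete on $|D^+|$ vertices, and therefore has exactly one connected component; the same holds for $D^-(\vec\phi)$. Consequently $r^+\le 1$ and $r^-\le 1$, so $S(\vec\phi)=r^++r^-\le 2$. Since this bound is uniform over $\vec\phi\in \mathcal{K}_\mu$, we get $\nu(\mu,K_n)\le 2$. (If $\mu\ne 0$, then $\vec\phi\ne \mathrm{const}\cdot\vec 1_V$, so not both $D^+$ and $D^-$ can be empty.)

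For the lower bound, given $\mu=\frac{n-k}{n-1}\in \sigma(K_n)$ with $1\le k\le \lfloor n/2\rfloor$, the inequality $2k\le n$ lets me pick two disjoint subsets $D^+,D^-\subset V$ of size $k$. I would define
\[
\phi_i=\begin{cases} c, & i\in D^+,\\ -c, & i\in D^-,\\ 0, & \text{otherwise},\end{cases}
\]
with $c=1/(2k(n-1))$ so that $\vec\phi\in X$, and then verify the eigensystem \eqref{eq:eigen-system} directly. For $i\in D^+$, choose $z_{ij}=0$ on the $k-1$ edges inside $D^+$ (where $\phi_i=\phi_j$), and $z_{ij}=1$ on the $k+(n-2k)=n-k$ edges toward vertices with strictly smaller value; the sum is $n-k=\mu(n-1)=\mu d_i \sgn(\phi_i)$. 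The case $i\in D^-$ is symmetric. For $i\notin D^+\cup D^-$, setting $z_{ij}=-1$ toward $D^+$, $z_{ij}=1$ toward $D^-$, and $z_{ij}=0$ toward the remaining zero vertices yields $\sum_{j\sim i}z_{ij}=-k+k=0\in \mu(n-1)[-1,1]=\mu d_i \sgn(\phi_i)$. Hence $\vec\phi$ is a genuine eigenvector with eigenvalue $\mu$ and two strong nodal domains $D^+,D^-$, giving $\nu(\mu,K_n)\ge 2$.

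There is essentially no hard step here: the upper bound is an immediate topological consequence of $K_n$ being complete, and the lower bound is a one-line construction plus a direct check of the sign-graph conditions. The only fussy point is bookkeeping of the $z_{ij}$ values in the three cases of the verification, which is purely routine.
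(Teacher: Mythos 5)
Your proof is correct and follows essentially the same route as the paper: the upper bound $\nu(\mu,K_n)\le 2$ comes from the fact that every induced subgraph of $K_n$ is connected (so $r^+\le 1$ and $r^-\le 1$), and the lower bound is the explicit eigenvector supported on two disjoint $k$-subsets $D^+,D^-$, i.e.\ a multiple of $\hat{\vec 1}_{D^+}-\hat{\vec 1}_{D^-}$, using $2k\le n$. The only difference is that you write out the $z_{ij}$ verification that the paper leaves as ``easily verified.''
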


\begin{proof}
Since any two vertices of $K_n$ are adjacent, this implies three requirements:
\begin{equation}\label{eq:requirements}
\begin{array}{ll}
\text{1.}&\text{Any subset $D$ is connected;}\\
\text{2.}&\text{All subsets with same number of vertices have the same structure;}\\
\text{3.}&\text{To any eigenvector, there is at most one positive and one negative nodal domains.}
\end{array}
\end{equation}
Accordingly, we have  $\nu(\mu, K_n)\le 2$. So, it is sufficient to construct an eigenvector $\vec\phi$ corresponding to $\mu$ such that $S(\vec\phi)=2$.
By Proposition \ref{pro:spectrum-Kn}, the two nodal domains possess the same cardinal number. Since $ 2k\le n$, we choose any two subsets $D^+$ and $D^-$ satisfying $|D^+|=|D^-|=k$ and $D^+\cap D^-=\emptyset$. Let
$$
\vec\phi=\hat{\vec 1}_{D^+} -\hat{\vec 1}_{D^-}.
$$
It can be easily verified that such $\vec\phi$
is indeed a $\mu$ eigenvector and $S(\vec\phi)=2$.
\end{proof}

\section{Courant nodal domain theorem}
\label{sec:nodal-domain}

As an extension of the Strum-Liouville theorem for ODE, the Courant nodal domain theorem is a fundamental result for elliptic partial differential equations on manifolds. Its discrete analog to the linear Laplacian on graphs is an important part of the linear spectral graph theory. In this section, we shall extend the nodal domain theorem to the nonlinear graph Laplacian.

Besides the strong nodal domain, we can still have a weak one. For an vector $\vec x=(x_1, \cdots, x_n)$, a positive weak nodal domain is a maximal connected induced subgraph of $G=(V,E)$ on a subset of $V$:
$$
\{i\in V\,|\, x_i\ge 0,\,\text{ and }\, \exists\, j\in V,\,\mbox{such that}\,x_j>0 \}.
$$
Similarly we can define the negative weak domain. Let $W(\vec x)$
denote the number of  weak nodal domains. Obviously, we have
$$
W(\vec x)\le S(\vec x).
$$

Let $G$ be a connected graph and
$$\lambda_1\le \lambda_2\le \cdots \le \lambda_k=\cdots=\lambda_{k+r-1}<\lambda_{k+r}\le \cdots \le \lambda_n$$
be the eigenvalues of the normalized Laplacian $L$ on $G$. Then the following extended Courant nodal domain theorem holds.

\begin{theorem}
[\cite{BiyikogluLeydoldStadler2007}, Theorem 3.1]
\label{th:2lap_nd}
Let $\vec\phi_k$ be an eigenvector corresponding to $\lambda_k$ with multiplicity $r$, then
$$
W(\vec\phi_k)\le k, \,\,\, S(\vec\phi_k)\le k+r-1.
$$
\end{theorem}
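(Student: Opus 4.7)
The plan is the classical Courant-type proof, adapted to the graph setting via the Courant--Fischer min-max characterization of eigenvalues of the normalized Laplacian $L$. Since $L$ is self-adjoint under the degree-weighted inner product $\langle \vec x,\vec y\rangle_D=\sum_v d_v x_v y_v$, one has
\[
R(\vec x):=\frac{\sum_{u\sim v}(x_u-x_v)^2}{\sum_v d_v x_v^2},\qquad \lambda_k=\min_{\dim U=k}\max_{\vec 0\ne \vec x\in U}R(\vec x),
\]
and in particular $R(\vec x)\ge \lambda_k$ whenever $\vec x\neq \vec 0$ is $\langle\cdot,\cdot\rangle_D$-orthogonal to $\vec\phi_1,\ldots,\vec\phi_{k-1}$, and $R(\vec x)\ge \lambda_{k+r}>\lambda_k$ whenever $\vec x\neq \vec 0$ is $\langle\cdot,\cdot\rangle_D$-orthogonal to $\vec\phi_1,\ldots,\vec\phi_{k+r-1}$.

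To each nodal domain $D$ of $\vec\phi_k$ (strong or weak) I associate the truncated test vector $\vec\psi_D=\vec\phi_k\cdot \vec 1_D$. Distinct nodal domains of the same type have disjoint \emph{strict} supports (they can meet only at zero vertices of $\vec\phi_k$), so the family $\{\vec\psi_D\}$ is linearly independent. The key inequality is $R(\vec\psi_D)\le \lambda_k$, proved by pairing the eigenequation $\sum_{v\sim u}(\phi_k(u)-\phi_k(v))=\lambda_k d_u\phi_k(u)$ against $\phi_k(u)\vec 1_D(u)$ and summing over $u$: the within-$D$ edges give exactly the numerator of $R(\vec\psi_D)$, whereas each boundary edge $(u,v)$ with $u\in D$, $v\notin D$ contributes $\phi_k(u)^2-\phi_k(u)\phi_k(v)\ge \phi_k(u)^2=(\psi_D(u)-\psi_D(v))^2$ thanks to the sign condition $\phi_k(u)\phi_k(v)\le 0$ across $\partial D$. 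Equality forces $\phi_k(v)=0$ on every $v\in\partial D$.

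For the weak bound, suppose for contradiction $\vec\phi_k$ has weak nodal domains $W_1,\ldots,W_{k+1}$; dimension counting produces a nonzero $\vec\psi=\sum_{i=1}^{k+1} c_i \vec\psi_{W_i}$ orthogonal to $\vec\phi_1,\ldots,\vec\phi_{k-1}$. Expanding $R(\vec\psi)$ using disjointness of strict supports, together with the strict sign change $\phi_k(u)\phi_k(v)<0$ at every edge between two different weak domains, yields $R(\vec\psi)\le \lambda_k$, which can coexist with $R(\vec\psi)\ge \lambda_k$ only as equality. Equality then forces $\vec\psi$ to be a $\lambda_k$-eigenvector and $\phi_k$ to vanish on every boundary vertex of every $W_i$; a discrete unique-continuation argument on the connected graph $G$ (an $L$-eigenvector supported in a proper induced subgraph with vanishing boundary values must be identically zero) rules this out. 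The strong bound $S(\vec\phi_k)\le k+r-1$ follows by the same scheme with strong nodal domains $D_1,\ldots,D_{k+r}$: impose the $k-1$ orthogonality constraints against $\vec\phi_1,\ldots,\vec\phi_{k-1}$ \emph{plus} $r$ additional constraints against a basis of the $\lambda_k$-eigenspace, leaving a nonzero $\vec\psi$ orthogonal to $\vec\phi_1,\ldots,\vec\phi_{k+r-1}$; then $R(\vec\psi)\ge \lambda_{k+r}>\lambda_k$ contradicts $R(\vec\psi)\le \lambda_k$ immediately, without needing the equality analysis.

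The main technical obstacle will be the cross-edge accounting in the Rayleigh-quotient computation for $\vec\psi=\sum c_i\vec\psi_{D_i}$: each edge between two different nodal domains produces a cross-term $-2c_ic_j\phi_k(u)\phi_k(v)$ whose sign must be controlled. The weak-nodal sign condition $\phi_k(u)\phi_k(v)<0$ across $\partial W_i$ makes this clean; for strong nodal domains, boundary edges into zero vertices need separate treatment, and the discrete unique-continuation step that upgrades a Rayleigh-quotient equality into the structural contradiction is the subtle part. Getting this sign-bookkeeping right is the heart of the argument; everything else is linear algebra and the definitions of weak and strong nodal domains.
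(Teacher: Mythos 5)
First, a point of reference: the paper does not prove this statement at all --- it is quoted verbatim from the literature (\cite{BiyikogluLeydoldStadler2007}, Theorem 3.1; originally \cite{DaviesGladwellLeydoldStadler2001}), so there is no in-paper proof to compare against. The closest internal analogue is the paper's own proof of Theorem \ref{th:strong-nodal-domain} for the $1$-Laplacian, which uses exactly your skeleton for the \emph{strong} count: span the indicator/truncation vectors of the nodal domains, bound the Rayleigh-type functional on that span by the eigenvalue, and play this against the minimax characterization. Your strong-bound argument ($S(\vec\phi_k)\le k+r-1$) is the standard one and is sound, with one bookkeeping caveat: the isolated cross-term $-2c_ic_j\phi_k(u)\phi_k(v)$ you single out is \emph{not} sign-definite (take $c_ic_j<0$ across a sign-changing edge). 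The correct accounting pairs the eigenequation against $c_i^2\phi_k(u)$ on each domain and compares with the quadratic form of $\sum_i c_i\vec\psi_{D_i}$; the discrepancy per cross edge is then $-(c_i-c_j)^2\phi_k(u)\phi_k(v)\ge 0$, which is nonnegative for \emph{all} real coefficients because adjacent strong nodal domains carry opposite signs. With that grouping, $R(\vec\psi)\le\lambda_k$ holds on the whole span and the dimension count against $\lambda_{k+r}>\lambda_k$ finishes the strong bound cleanly, as you say.

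The weak bound is where your sketch has a genuine gap. The ``discrete unique-continuation'' principle you invoke --- an eigenvector of a connected graph supported in a proper induced subgraph with vanishing boundary values must vanish identically --- is false for graph Laplacians: on $P_3$ the vector $(1,0,-1)$ is an eigenvector of the (normalized) Laplacian of a connected graph that vanishes precisely on the separating vertex. Moreover, your intermediate claim that equality forces $\phi_k$ to vanish on the boundary vertices of the $W_i$ contradicts the structure you correctly established two sentences earlier (every edge between distinct weak nodal domains joins a strictly positive to a strictly negative vertex), so that route cannot close. What equality actually yields is the system of coefficient identities $c_i=c_j$ across every sign-changing edge between $W_i$ and $W_j$, together with the fact that $\vec\psi$ is itself a $\lambda_k$-eigenvector; the contradiction must then be extracted combinatorially (e.g.\ by normalizing one coefficient to zero and analyzing $\vec\psi$ near a sign-changing edge on the boundary of the corresponding domain), and the adjacency graph of weak nodal domains under sign-changing edges need not be connected (again $(1,0,-1)$ on $P_3$), so ``all $c_i$ are equal'' is not automatic. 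This equality analysis is precisely the delicate part of the proof in \cite{DaviesGladwellLeydoldStadler2001}, and your sketch replaces it with a principle that does not hold. It is worth noting that the present paper shows (Example \ref{exam:10G-weak-nodal}) that the weak bound genuinely fails for the $1$-Laplacian --- further evidence that the weak half is not a formality.
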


In particular, Fiedler showed that

\begin{cor}[\cite{Fiedler1975}, Lemma 3.2]
\label{cor:Fiedler}
 The eigenvector $\vec\phi$ affording to the smallest nonzero eigenvalue of any connected graph has $W(\vec\phi)=2$ weak nodal domains.
\end{cor}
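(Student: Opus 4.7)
The plan is to deduce the corollary directly from Theorem \ref{th:2lap_nd} combined with a one-line orthogonality argument. First I would observe that in a connected graph the eigenvalue $\lambda_1=0$ is simple (with constant eigenvector), so the smallest nonzero eigenvalue is precisely $\lambda_2$. Invoking Theorem \ref{th:2lap_nd} with $k=2$ then yields the upper bound $W(\vec\phi)\le 2$ for any eigenvector $\vec\phi$ affording this eigenvalue. It therefore remains only to prove $W(\vec\phi)\ge 2$, i.e., to rule out the degenerate situation in which $\vec\phi$ has only one weak nodal domain.

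For the lower bound I would exploit the orthogonality of eigenvectors belonging to distinct eigenvalues. Since $\vec 1$ is the eigenvector of $\lambda_1=0$, in the appropriate degree-weighted inner product the relation $\vec\phi\perp\vec 1$ reads $\sum_i d_i\phi_i=0$. Because every $d_i>0$ and $\vec\phi$ is nonzero, this single linear constraint forces $\vec\phi$ to have at least one strictly positive coordinate and at least one strictly negative coordinate. Consequently the induced subgraph on $\{i:\phi_i\ge 0\}$ is nonempty and contributes at least one positive weak nodal domain; symmetrically there is at least one negative weak nodal domain. Hence $W(\vec\phi)\ge 2$, and combining with the upper bound gives $W(\vec\phi)=2$.

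The only real obstacle I anticipate is a bookkeeping one: Theorem \ref{th:2lap_nd} is formulated for the normalized Laplacian and its orthonormal eigenvectors, whereas the identity $\sum_i d_i\phi_i=0$ is most naturally read off from the generalized eigenproblem $(D-A)\vec\phi=\lambda D\vec\phi$. A brief reconciliation via the substitution $\vec v=D^{1/2}\vec\phi$ (or a remark pinning down the normalization convention) is all that is needed before the upper and lower bounds can be fused into the claimed equality.
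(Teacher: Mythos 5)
Your argument is correct, but note that the paper itself offers no proof of this corollary: it is quoted verbatim from Fiedler's 1975 paper (Lemma 3.2 there) as background for the linear theory, and serves only to set up the contrast with Example \ref{exam:10G-weak-nodal}, where the analogous statement fails for the $1$-Laplacian. What you have written is a legitimate independent derivation: the upper bound $W(\vec\phi)\le 2$ does follow from Theorem \ref{th:2lap_nd} with $k=2$ once you note that $\lambda_1=0$ is simple on a connected graph, and the lower bound follows from the $D$-weighted orthogonality $\sum_i d_i\phi_i=0$, which forces entries of both signs and hence at least one positive and one negative weak nodal domain. Your closing remark about reconciling the normalized eigenproblem for $L=D^{-1/2}(D-A)D^{-1/2}$ with the generalized problem $(D-A)\vec\phi=\lambda D\vec\phi$ via $\vec v=D^{1/2}\vec\phi$ is exactly the right bookkeeping. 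The one historical caveat is that this is not Fiedler's route: his original proof is a direct matrix-theoretic argument in the spirit of Perron--Frobenius (it predates the discrete Courant nodal domain theorem of Davies et al.\ by decades), so deriving the corollary from Theorem \ref{th:2lap_nd} is logically cleaner within this paper's framework but historically anachronistic. Either way, nothing in your argument is wrong, and no gap needs filling.
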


Recently, Chang proved that those critical values defined in Eq.~\eqref{eq:def-ck} satisfy \cite{Chang2015}
\begin{equation}\label{eq:c1}
c_1\le c_2 \le \cdots \le c_n,
\end{equation}
and if
\begin{equation}\label{eq:c2}
c_{k+1}=\cdots =c_{k+l} = c,\,\,\, 0\le k\le k+l \le n,
\end{equation}
then the genus $\gamma(\mathcal{K}_c)\ge l$.

A critical value $c$ is said of {\sl topological multiplicity} $l$ if $\gamma(\mathcal{K}_c)=l$, denote by $\tm(c) = l$.

We therefore ask:

{\sl Do these results have their counterparts in the graph $1$-Laplacian?}

First, we extend Theorem \ref{th:2lap_nd} for strong nodal domains to $1$-Laplacian on graphs.

\begin{theorem}
\label{th:strong-nodal-domain}
 Let $(c_k, \vec\phi_k)$ be the $k$-th eigenpair of $\Delta_1$ and $r=\tm(c_k)$. Then we have
$$
S(\vec\phi_k)\le k+r-1.
$$
\end{theorem}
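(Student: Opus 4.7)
The plan is to adapt the classical Courant--Ljusternik--Schnirelmann argument to the $1$-Laplacian, using Theorem \ref{th:equivalent-binary} as a replacement for linear orthogonality. Assume for contradiction that $S(\vec\phi_k)=s\ge k+r$, and let $D_1,\ldots,D_s$ be the strong nodal domains of $\vec\phi_k$. By Corollary \ref{cor:nodal-eigenvector}, each $\hat{\vec 1}_{D_\gamma}$ is an eigenvector with eigenvalue $c_k$ lying in $X$, so $I(\hat{\vec 1}_{D_\gamma})=c_k$ (the standard identity $I(\vec x)=\mu$ on an eigenpair in $X$, obtained by pairing \eqref{eq:eigen-system} with $\vec x$). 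Define the odd continuous map $\Phi(\vec t)=\sum_\gamma t_\gamma\hat{\vec 1}_{D_\gamma}$ on the $\ell^1$-sphere $\Sigma=\{\vec t\in\mathbb{R}^s:\sum_\gamma|t_\gamma|=1\}$. Since the supports $D_\gamma$ are pairwise disjoint, $\Phi$ is injective, and a quick check using the $\vol(D_\gamma)$-normalization gives $\Phi(\vec t)\in X$; thus $A_s:=\Phi(\Sigma)$ is a closed symmetric subset of $\mathbb{R}^n\setminus\{0\}$ homeomorphic to $S^{s-1}$ via an odd map, so $\gamma(A_s)=s$.

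Next I bound $I$ on $A_s$. For $\vec x=\Phi(\vec t)$, any edge whose two endpoints lie in the same nodal domain (or both in the null set) contributes zero to $I(\vec x)$, and the triangle inequality applied to the remaining edges yields
\[
I(\vec x)\;\le\;\sum_\gamma\frac{|t_\gamma|}{\vol(D_\gamma)}\,|\partial D_\gamma|\;=\;c_k\sum_\gamma|t_\gamma|\;=\;c_k,
\]
where the first equality uses $|\partial D_\gamma|/\vol(D_\gamma)=I(\hat{\vec 1}_{D_\gamma})=c_k$. Consequently $\max_{A_s}I\le c_k$, and the minimax definition \eqref{eq:def-ck} forces $c_s\le c_k$. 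But $\tm(c_k)=r$ means $\gamma(\mathcal{K}_{c_k})=r$, and the Ljusternik--Schnirelmann multiplicity bound recalled around \eqref{eq:c2} forces any maximal run $c_{k_0+1}=\cdots=c_{k_0+L}=c_k$ to satisfy $L\le r$; since $k$ lies in such a run, $c_{k+r}>c_k$, contradicting $c_s\le c_k$ with $s\ge k+r$.

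The main technical obstacle is the sharp $I$-bound in the second step: its success depends essentially on the disjoint-support structure delivered by Theorem \ref{th:equivalent-binary} together with the Rayleigh identity $I(\hat{\vec 1}_{D_\gamma})=c_k$; absent these, the triangle inequality applied to a general linear combination would not collapse to exactly $c_k$. A minor subtlety is extracting the run-length bound $L\le r$ from $\gamma(\mathcal{K}_{c_k})=r$, which must be read as the contrapositive of the genus inequality stated after \eqref{eq:c2} and applied to the maximal run of equal eigenvalues containing $c_k$, rather than only the run starting at index $k$.
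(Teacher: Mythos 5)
Your proof is correct and follows essentially the same route as the paper's: build the genus-$s$ symmetric set of normalized linear combinations of the nodal-domain indicators, bound $I$ there by $c_k$ via the triangle inequality together with the identity $|\partial D_\gamma|/\vol(D_\gamma)=I(\hat{\vec 1}_{D_\gamma})=c_k$ (which the paper obtains by summing the eigen-equations over each domain, and you obtain from Corollary \ref{cor:nodal-eigenvector}), and then contradict the minimax values. Your handling of the final step — extracting $c_{k+r}>c_k$ from the maximal run of equal critical values rather than assuming the run starts at index $k$ — is in fact slightly more careful than the paper's wording, but it is the same argument.
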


\begin{proof}  For simplicity, we omit the subscript of $\vec\phi_k$, and denote it by $\vec\phi$. Assume that $\vec\phi$ has a nodal domain decomposition:
\begin{equation*}
 \vec \phi=(\sum^{r^+}_{\alpha=1}\sum_{i\in D^+_\alpha}-\sum^{r^-}_{\beta=1}\sum_{i\in D^-_\beta}) \phi_i \vec e_i.
\end{equation*}
We consider the linear subspace
$$
E_{l}=\Span\{\vec1_{D_\alpha^+},\,\vec1_{D_\beta^-}: \alpha=1, \cdots, r^+,\,\beta=1, \cdots, r^-\},\,\,\,
l=S(\vec\phi)=r^++r^-.
$$
It is obvious that the genus $\gamma(E_{l}\cap X)=l$. Consequently, $\forall\, \vec\phi\in E_{l}\cap X$,
one has
\begin{equation}
\label{eq:1}
 \vec \phi=\sum^{r^+}_{\alpha=1} a_\alpha \vec1_{D_\alpha^+}-\sum^{r^-}_{\beta=1} b_\beta \vec1_{D_\beta^-}\,\, \text{ and } \,\,
\sum^{r^+}_{\alpha=1} |a_\alpha|\delta_\alpha^+ +\sum^{r^-}_{\beta=1} |b_\beta| \delta_\beta^- = 1,
\end{equation}
where $a_{\alpha}, b_{\beta}$ are the linear expansion coefficients in $E_{l}$.

We shall prove below: $I(\vec\phi)\le c_{k+r-1}$.


Let
\begin{align}
p^\pm_i&=\sum_{j\sim i, \, j\notin D^\pm_\gamma} z_{ij}(\vec\phi),\,\,\,\,\forall\, i\in D^\pm_\gamma,
\nonumber\\
Z^+_{\alpha\beta}&=\sum_{
\begin{subarray}{c}
j\sim i, \\
i\in D^+_\alpha,\,j\in D^-_\beta
\end{subarray}
}z_{ij}(\vec\phi) \geq 0,\,\,\,Z^\downarrow_\alpha=\sum_{
\begin{subarray}{c}
j\sim i, \\
i\in D^+_\alpha,\,j\in D^0
\end{subarray}
}z_{ij}(\vec\phi)\geq 0, \label{eq:Z+}\\
Z^-_{\beta\alpha}&=\sum_{
\begin{subarray}{c}
j\sim i, \\
i\in D^-_\beta,\,j\in D^+_\alpha
\end{subarray}
}z_{ij}(\vec\phi) \leq 0,\,\,\,Z^\uparrow_\beta=\sum_{
\begin{subarray}{c}
j\sim i, \\
i\in D^-_\beta,\,j\in D^0
\end{subarray}
}z_{ij}(\vec\phi) \leq 0.\label{eq:Z-}
\end{align}
Since
$$
\sum_{
\begin{subarray}{c}
j\sim i, \\
i\in D^\pm_\gamma,\,j\in D^\pm_\gamma
\end{subarray}
} z_{ij}(\vec\phi)=0, \,\,\, \forall\ \gamma,
$$
it follows
$$
\sum_{i\in D^\pm_\gamma}
p^\pm_i=\pm c_{k+r-1}\sum_{i\in D^\pm_\gamma}d_i=\pm c_{k+r-1}\delta^\pm_\gamma.$$
By summation over $D^+_\alpha$, we obtain
\begin{equation}
\label{eq:Z+sum}
\sum^{r^-}_{\beta=1} Z^+_{\alpha\beta}+Z^\downarrow_\alpha=
\sum_{i\in D_\alpha^+} p_i^+
=
c_{k+r-1}\delta^+_\alpha.
\end{equation}
Similarly,
\begin{equation}
\label{eq:Z-sum}
\sum^{r^+}_{\alpha=1} Z^-_{\beta\alpha}+Z^\uparrow_\beta=
\sum_{i\in D_\beta^-} p_i^-
=
-c_{k+r-1}\delta^-_\beta.
\end{equation}
Accordingly, we have
\begin{align*}
I(\vec\phi)
&=\sum_{\alpha,\beta}\sum_{
\begin{subarray}{c}
j\sim i, \\
i\in D^+_\alpha,\,j\in D^-_\beta
\end{subarray}
}|a_\alpha-b_\beta|
+\sum_\alpha
\sum_{
\begin{subarray}{c}
j\sim i, \\
i\in D^+_\alpha,\,j\in D^0
\end{subarray}
}|a_\alpha|
+\sum_\beta
\sum_{
\begin{subarray}{c}
j\sim i, \\
i\in D^-_\beta,\,j\in D^0
\end{subarray}
}|b_\beta| \\
&\le \sum_{\alpha,\beta}\sum_{
\begin{subarray}{c}
j\sim i, \\
i\in D^+_\alpha,\,j\in D^-_\beta
\end{subarray}
}(|a_\alpha|+|b_\beta|)
+\sum_\alpha
\sum_{
\begin{subarray}{c}
j\sim i, \\
i\in D^+_\alpha,\,j\in D^0
\end{subarray}
}|a_\alpha|
+\sum_\beta
\sum_{
\begin{subarray}{c}
j\sim i, \\
i\in D^-_\beta,\,j\in D^0
\end{subarray}
}|b_\beta|
\\&=\sum_{\alpha=1}^{r^+}|a_\alpha|(\sum^{r^-}_{\beta=1}Z^+_{\alpha\beta}+ Z^\downarrow_\alpha )-\sum_{\beta=1}^{r^-}|b_\beta|(\sum_{\alpha=1}^{r^+}Z^-_{\beta\alpha}+Z^\uparrow_\beta) \\
&= c_{k+r-1}(\sum_{\alpha=1}^{r^+}|a_\alpha| \delta_\alpha^+ + \sum_{\beta=1}^{r^-} |b_\beta| \delta_\beta^-) = c_{k+r-1},
\end{align*}
where Eqs.~\eqref{eq:Z+} and \eqref{eq:Z-} are used in the third line, and Eqs.~\eqref{eq:Z+sum}, \eqref{eq:Z-sum} and \eqref{eq:1} are applied in the last line.

Now, suppose the conclusion of the theorem is not true: $\textsl{S}(\vec\phi)>k+r-1$, i.e., $l\geq k+r$,  then
$$
c_{l}=\inf_{\gamma(A)\ge l}\sup\limits_{\vec x\in A}I(\vec x)\le \sup\limits_{\vec\phi\in E_{l}\cap X}I(\vec\phi)\le c_{k+r-1}\,\,\,
\Rightarrow \,\,\,
c_{k+r}\leq c_l \leq c_{k+r-1}.
$$
On the other hand, according to Eqs.~\eqref{eq:c1} and \eqref{eq:c2}, $r=\tm(c_k)$ implies
\[
c_k = c_{k+1} =\cdots = c_{k+r-1} < c_{k+r}.
\]
This is a contradiction.
\end{proof}


It remains to ask:

{\sl Does the estimates for weak nodal domains: $W(\vec\phi_k)\le k$ hold for $1$-Laplacian eigenvectors?}

A {\sl negative} answer is given below. Actually,
we present a graph on which $W(\vec\phi_2)=3$ holds in Example \ref{exam:10G-weak-nodal}.


\begin{example}\rm
\label{exam:10G-weak-nodal}
Let $G=(V,E)$ be a graph displayed in Fig.~\ref{fig:10G} with
\begin{align*}
V&=\{1,2,\cdots,10\}\\
E&=\{(1,2),(1,3),(1,4),(1,5),(2,3),(2,4),(3,4),(3,6),(5,7),(5,8), (7,8)
,(6,9),(6,10),(9,10)\}.
\end{align*}
Such graph is constructed by two copies of the $3$-rd order complete graph $K_3$ and a $4$-th order complete graph $K_4$ as shown in Fig.~\ref{fig:10G}.
Actually, we could consider two copies of the module $K_3\sim \circ$ and another module $\circ\sim K_4\sim \circ$ (also used in Example \ref{exam:6v}), all of which are $\frac17$-eigencomponents. By identifying the extra vertex adjacent to $K_3$ and one of the extra vertex adjacent to $K_4$, and then eliminate it, the first component is plugged in the second.  Similarly we do for the other side. That is, the graph is obtained by plugging as follows
$$
G = (K_3\sim \circ)\plug (\circ\sim K_4\sim \circ) \plug (\circ\sim K_3).
$$
One can readily verify that
$$\vec\phi=\frac{1}{28}(-1,-1,-1,-1,1,1,1,1,1,1)$$
is an eigenvector with respect to the eigenvalue $c_2=\frac17$ (because the Cheeger constant $h(G)=\frac17$), i.e., $\vec\phi\in\mathcal{K}_{c_2}$, but the number of weak nodal domains of $\vec\phi$ is $W(\vec\phi)=3$ (in fact $S(\vec\phi)=3$).
\begin{figure}[htpb]
\begin{center}
\begin{tikzpicture}[auto]
\node (1) at (-1,0) {$-$};
\node (2) at (0,1) {$-$};
\node (4) at (0,-1) {$-$};
\node (3) at (1,0) {$-$};
\node (5) at (-2,0) {$+$};
\node (6) at (2,0) {$+$};
\node (7) at (-4,1) {$+$};
\node (8) at (-4,-1) {$+$};
\node (9) at (4,1) {$+$};
\node (10) at (4,-1) {$+$};
\node (1-) at (-1.3,0.3) {$1$};
\node (2-) at (0.3,1.3) {$2$};
\node (4-) at (0.3,-1.3) {$4$};
\node (3-) at (1.3,0.3) {$3$};
\node (5-) at (-2.3,0.3) {$5$};
\node (6-) at (2.3,0.3) {$6$};
\node (7-) at (-4.3,1.3) {$7$};
\node (8-) at (-4.3,-1.3) {$8$};
\node (9-) at (4.3,1.3) {$9$};
\node (10-) at (4.3,-1.3) {$10$};
\draw (1) to (2);
\draw (1) to (3);
\draw (1) to (4);
\draw (1) to (5);
\draw (2) to (3);
\draw (2) to (4);
\draw (3) to (4);
\draw (3) to (6);
\draw (5) to (7);
\draw (5) to (8);
\draw (7) to (8);
\draw (6) to (9);
\draw (10) to (9);
\draw (6) to (10);
\draw (1) circle(0.23);
\draw (2) circle(0.23);
\draw (3) circle(0.23);
\draw (4) circle(0.23);
\draw (5) circle(0.23);
\draw (6) circle(0.23);
\draw (7) circle(0.23);
\draw (8) circle(0.23);
\draw (9) circle(0.23);
\draw (10) circle(0.23);
\end{tikzpicture}
\end{center}

\begin{center}
\begin{tikzpicture}[auto]
\node (1) at (-1,1) {$\bullet$};
\node (2) at (0,0) {$\bullet$};
\node (3) at (-1,-1) {$\bullet$};
\node (4) at (1, 0) {$0$};
\draw (1) to (2);
\draw (2) to (3);
\draw (1) to (3);
\draw (2) to (4);
\draw (1) circle(0.23);
\draw (2) circle(0.23);
\draw (3) circle(0.23);
\draw (4) circle(0.23);
\node (a) at (-1+5,0) {$\bullet$};
\node (b) at (0+5,1) {$\bullet$};
\node (c) at (0+5,-1) {$\bullet$};
\node (d) at (1+5,0) {$\bullet$};
\node (e) at (-2+5,0) {$0$};
\node (f) at (2+5,0) {$0$};
\draw (a) to (b);
\draw (a) to (c);
\draw (a) to (d);
\draw (a) to (e);
\draw (b) to (c);
\draw (b) to (d);
\draw (c) to (d);
\draw (d) to (f);
\draw (a) circle(0.23);
\draw (b) circle(0.23);
\draw (c) circle(0.23);
\draw (d) circle(0.23);
\draw (e) circle(0.23);
\draw (f) circle(0.23);
\end{tikzpicture}
\end{center}
\caption{\small A plugging module used in Example \ref{exam:10G-weak-nodal}.}
\label{fig:10G}
\end{figure}
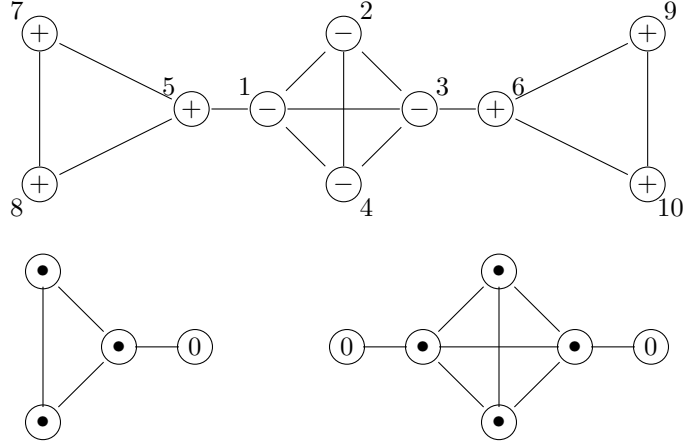

\end{example}

\section{Nodal Domains and the $k$-way Cheeger Constant}
\label{sec:k-way-Cheeger}

In this section we study the $k$-way Cheeger constant. Let $F_k$ denote the family of all $k$-disjoint subsets of $V$, $k=1,2, \cdots, n$, i.e.,
$$
{\sl F}_k=\{(S_1, \cdots, S_k)\,|\,S_i\subset V,\,S_i\cap S_j=\emptyset,\,\forall\,i\neq j\}.
$$
The $k$-way Cheeger constant defined in Eq.~\eqref{eq:hk} reads
$$
h_k=\min_{(S_1, \cdots, S_k)\in {\sl F}_k} \max_{1\le i\le k}\frac{|\partial S_i|}{\vol(S_i)}.
$$

Let
$$
\psi(S)=\frac{|\partial S|}{\vol(S)},\,\,\,
\forall\, S\subset V.
$$
We have
$$
\psi(S)=I(\hat{\vec 1}_S),
$$
and then
\begin{equation}
\label{eq:def-hk}
 h_k=\min_{(S_1, \cdots, S_k)\in {\sl F}_k} \max_{1\le i\le k}I(\hat{\vec 1}_{S_i}).
\end{equation}

\begin{lemma}
\label{lem:sumai=1}
For any $\vec a=(a_1, \cdots, a_k)\in \mathbb{R}^k$ and $(S_1, \cdots, S_k)\in {\sl F}_k$, if $\sum^k\limits_{i=1}a_i\hat{\vec 1}_{S_i}\in X$, then $\sum^k\limits_{i=1}|a_i|=1$.
\end{lemma}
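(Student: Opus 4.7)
The plan is very short because the claim follows directly from the definitions of $X$ and $\hat{\vec 1}_{S_i}$, using only the disjointness of $S_1,\dots,S_k$.

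First I would unfold the definitions. Since the $S_i$ are pairwise disjoint, the supports of the vectors $\hat{\vec 1}_{S_1},\dots,\hat{\vec 1}_{S_k}$ are disjoint. Thus if $\vec x=\sum_{i=1}^k a_i\hat{\vec 1}_{S_i}$, then for every vertex $v\in S_i$ one has $x_v=a_i/\vol(S_i)$, while $x_v=0$ for $v\notin\bigcup_i S_i$. In particular $|x_v|=|a_i|/\vol(S_i)$ on $S_i$.

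Next I would compute $\sum_{v=1}^n d_v|x_v|$ by splitting the sum over the $S_i$ (contributions from $V\setminus\bigcup_i S_i$ vanish):
\[
\sum_{v=1}^n d_v|x_v|=\sum_{i=1}^k\sum_{v\in S_i}d_v\frac{|a_i|}{\vol(S_i)}=\sum_{i=1}^k\frac{|a_i|}{\vol(S_i)}\sum_{v\in S_i}d_v=\sum_{i=1}^k|a_i|,
\]
since $\sum_{v\in S_i}d_v=\vol(S_i)$. The hypothesis $\vec x\in X$ means this quantity equals $1$, giving $\sum_{i=1}^k|a_i|=1$.

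There is no real obstacle here; the only point to be careful about is verifying that the disjointness of the $S_i$ really does make the $L^1$-type norm $\sum_v d_v|x_v|$ additive across the blocks, which is exactly what rules out any cancellation inside an absolute value. Once that is noted, the identity is immediate.
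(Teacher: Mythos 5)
Your proof is correct and follows essentially the same route as the paper: both arguments use the pairwise disjointness of the $S_i$ to make the weighted $\ell^1$-norm $\sum_v d_v|x_v|$ additive across the blocks, with your version simply writing out componentwise what the paper phrases as $\|\sum_i a_i\hat{\vec 1}_{S_i}\|=\sum_i|a_i|\,\|\hat{\vec 1}_{S_i}\|$ together with $\|\hat{\vec 1}_{S_i}\|=1$.
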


\begin{proof}
Since $S_i\cap S_j=\emptyset$, as $i\neq j$,  $\forall\, x=(x_1, \cdots, x_n)\in X$, we have
$$1=\|x\|=\sum^n_{\alpha=1} d_\alpha|x_\alpha|=\sum^k_{i=1}\sum_{\alpha\in S_i}d_\alpha|x_\alpha|+\sum_{\alpha\notin \cup^k_{i=1}S_i}d_\alpha|x_\alpha|.$$
Then, it follows from $\|\hat{\vec 1}_{S_i}\|=1$ that
$$ 1=\left\|\sum^k_{i=1}a_i\hat{\vec 1}_{S_i}\right\|=\sum^k_{i=1}|a_i|\cdot\|\hat{\vec 1}_{S_i}\|=\sum^k_{i=1}|a_i|.$$
\end{proof}

\begin{theorem}
\label{th:ckhk}
Let $(c_k, \vec\phi_k)$ be the $k$-th eigenpair of $\Delta_1$. Then we have
\begin{enumerate}[(1)]
\item $c_k\le h_k,\,\,\,\forall\ k \in \{1,2,\cdots,n\}$;
\item If $S (\vec \phi_k)\ge m$, then $h_m\le c_k$.
\end{enumerate}
\end{theorem}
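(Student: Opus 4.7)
The plan is to prove the two inequalities separately: (1) follows from the minimax characterization \eqref{eq:def-ck} by constructing an explicit test set of the correct genus from any $k$-way partition, while (2) follows immediately from Theorem \ref{th:equivalent-binary} together with the fact that a normalized eigenvector realizes its eigenvalue as the value of $I$.

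For (1), given any $(S_1,\ldots,S_k)\in F_k$, I would construct the symmetric test set
$$A=\Bigl\{\sum_{i=1}^k a_i\hat{\vec 1}_{S_i}:\ \sum_{i=1}^k|a_i|=1\Bigr\}.$$
Because the $S_i$ are disjoint, the vectors $\hat{\vec 1}_{S_i}$ have disjoint supports and hence are linearly independent, so $(a_1,\ldots,a_k)\mapsto \sum_i a_i\hat{\vec 1}_{S_i}$ is an odd linear homeomorphism of the $\ell^1$-sphere in $\mathbb R^k$ onto $A$; Lemma \ref{lem:sumai=1} (together with the converse direct check) places $A\subset X$, and the standard Krasnoselski estimate gives $\gamma(A)=k$. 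To bound $I$ on $A$, I would split the edges of $G$ into three classes: intra-$S_i$ edges (which contribute $0$ since $\vec x$ is constant on each $S_i$), inter-$S_i$-$S_j$ edges with $i\ne j$, and edges from some $S_i$ to $V\setminus\bigcup_j S_j$. Applying $|a_i/\vol(S_i)-a_j/\vol(S_j)|\le |a_i|/\vol(S_i)+|a_j|/\vol(S_j)$ (and its boundary analog) and regrouping by index $i$ collects exactly $|\partial S_i|/\vol(S_i)=\psi(S_i)$, producing
$$I(\vec x)\le\sum_{i=1}^k|a_i|\psi(S_i)\le\max_i\psi(S_i).$$
Plugging into \eqref{eq:def-ck} gives $c_k\le\max_i\psi(S_i)$, and infimizing over $(S_1,\ldots,S_k)\in F_k$ yields $c_k\le h_k$.

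For (2), let $D_1,\ldots,D_l$ denote the strong nodal domains of $\vec\phi_k$, where $l\ge m$ by hypothesis. They are pairwise disjoint by definition, so any choice of $m$ of them gives an element of $F_m$. Corollary \ref{cor:nodal-eigenvector} says that each $\hat{\vec 1}_{D_\gamma}$ is an eigenvector with the same eigenvalue as $\vec\phi_k$, namely $c_k$. A one-line Euler-identity computation (pair the eigenvalue system \eqref{eq:eigen-system} with $\vec x$, exchange summation order using $z_{ji}=-z_{ij}$, and use $z_{ij}(x_i-x_j)=|x_i-x_j|$ together with $x_i\sgn(x_i)=|x_i|$) shows that for every eigenpair $(\mu,\vec x)$ with $\vec x\in X$ one has $I(\vec x)=\mu$; applied to $\vec x=\hat{\vec 1}_{D_\gamma}\in X$ this yields $\psi(D_\gamma)=I(\hat{\vec 1}_{D_\gamma})=c_k$. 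Feeding $(D_1,\ldots,D_m)$ into \eqref{eq:def-hk} then gives $h_m\le \max_{1\le i\le m}\psi(D_i)=c_k$.

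The most delicate step is the genus verification in part (1): I must check carefully that $A$ does lie in $X$ (so the minimax formula \eqref{eq:def-ck} actually applies to it) and that the parametrization by the $\ell^1$-sphere is truly an odd homeomorphism, so the standard identity $\gamma(S^{k-1})=k$ transfers to $\gamma(A)=k$. Once this is settled, the triangle-inequality bookkeeping on edge contributions and the nodal-domain half of the argument are essentially routine.
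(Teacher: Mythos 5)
Your proposal is correct and follows essentially the same route as the paper: part (1) uses the test set $E_k\cap X=\{\sum_i a_i\hat{\vec 1}_{S_i}:\sum_i|a_i|=1\}$ of genus $k$ with the bound $I(\sum_i a_i\hat{\vec 1}_{S_i})\le\sum_i|a_i|\psi(S_i)\le\max_i\psi(S_i)$ (the paper gets this from the semi-norm property of $I$ rather than your explicit edge bookkeeping, but it is the same inequality), and part (2) is the same application of Corollary \ref{cor:nodal-eigenvector} and $I(\hat{\vec 1}_{D_\gamma})=c_k$ fed into the definition of $h_m$.
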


\begin{proof}
$\forall\ (S_1, \cdots, S_k)\in {\sl F}_k$, we define a linear $k$-dimensional space $E_k=\Span\{\hat{\vec 1}_{S_i}\,|\,1\le i\le k\}$. It is easy to see that the genus $\gamma(E_k\cap X)=k$. According to Lemma \ref{lem:sumai=1}, we have
\begin{align*}
c_k&=\inf_{\gamma(A)\ge k} \max_{\vec x\in A} I(\vec x)\\
&\le \min_{(S_1, \cdots, S_m)\in {\sl F}_k}\max_{\vec x\in E_k\cap X} I(\vec x)\\
&=\min_{(S_1, \cdots, S_m)\in {\sl F}_k}\max_{\sum^k_{i=1}|a_i|=1}I(\sum^k_{i=1}a_i\hat{\vec 1}_{S_i}).
\end{align*}
Noticing that $\vec x\to I(\vec x)$ is a semi-norm, we further have
\begin{align*}
c_k&\le \min_{(S_1, \cdots, S_k)\in {\sl F}_k}\max_{\sum^k_{i=1}|a_i|=1}\sum^k_{i=1}|a_i|I(\hat{\vec 1}_{S_i})\\
&\le \min_{(S_1, \cdots, S_k)\in {\sl F}_k}\max_{1\le i \le k}I(\hat{\vec 1}_{S_i})=h_k.
\end{align*}

To prove the second conclusion, we may assume that the $\vec \phi_k$ has nodal domains $\{D_\gamma \,|\, 1\le \gamma \le m\}$. Then, by Corollary \ref{cor:nodal-eigenvector},
we obtain
$$
\psi(D_\gamma)=I(\hat{\vec 1}_{D_\gamma})=c_k.
$$
Thus,
$$
h_m\le \max_{1\le \gamma \le m} I(\hat{\vec 1}_{D_\gamma})= c_k.
$$
The proof is completed.
\end{proof}

Chang \cite{Chang2015} proved that $c_2=h(G)$ and the corresponding eigenvector is equivalent to a Cheeger cut by an easy process. By this reason, the number of nodal domains of the eigenvector $\vec \phi_2$ corresponding to the first nonzero eigenvalue $c_2$ is of most concerned. In the linear spectral theory, it is well known that the eigenvector corresponding to the first nonzero eigenvalue $\lambda_2>0$ is of changing sign, i.e., $S(\vec \phi_2)\ge 2$, but this is not true for the $1$-Laplacian, as shown in Example \ref{exam:5G-one-nodal}.

\begin{example}\rm
\label{exam:5G-one-nodal}
We study the graph $G$ (see Example 2.5 in \cite{Chang2015}), in which $V=\{1,2,3,4,5\}$, and $E=\{(1,2), (1,3), (2,3), (3,4), (3,5)\}$. It is known that $c_2=\frac12 $ and $\vec \phi=\frac14 (1,1,0,0,0)$ is an eigenvector. We conclude that $\vec \phi$ is the only eigenvector with eigenvalue $\frac12 $, and thus we have $S(\vec \phi)=1$ and
$\nu(\frac12 , G)=1$.
\begin{figure}[htpb]\centering
\begin{tikzpicture}[auto]
\node (1) at (-1,1) {$+$};
\node (2) at (1,1) {$+$};
\node (3) at (0,0) {$0$};
\node (4) at (-1,-1) {$0$};
\node (5) at (1,-1) {$0$};
\node (1-) at (-1.3,1.3) {$1$};
\node (2-) at (1.3,1.3) {$2$};
\node (3-) at (0,0.4) {$3$};
\node (4-) at (-1.3,-1.3) {$4$};
\node (5-) at (1.3,-1.3) {$5$};
\draw (1) to (2);
\draw (1) to (3);
\draw (2) to (3);
\draw (3) to (4);
\draw (3) to (5);
\draw (1) circle(0.23);
\draw (2) circle(0.23);
\draw (3) circle(0.23);
\draw (4) circle(0.23);
\draw (5) circle(0.23);
\end{tikzpicture}
\caption{A graph used in Example \ref{exam:5G-one-nodal}.}
\label{fig:5G-one-nodal}
\end{figure}
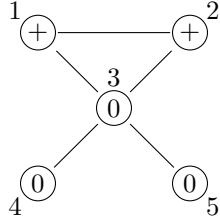

Now we give a detailed verification.
In fact, $z_{13}=z_{23}=1$, and after choosing $z_{12}=z_{34}=z_{35}=0$, we have
\begin{equation}
\label{eq:5G-one-nodal}
\left\{\begin{array} {l}
z_{12}+z_{13}=2\cdot\frac12  \sgn(x_1),\\
z_{21}+z_{23}=2\cdot\frac12  \sgn(x_2),\\
z_{31}+z_{32}+z_{34}+z_{35}\in 4\cdot \frac12 \cdot \sgn(x_3),\\
z_{43}\in \frac12 \cdot \sgn(x_4),\\
z_{53}\in \frac12 \cdot \sgn(x_5).
\end{array}\right.
\end{equation}
That is, $\vec\phi$ is an eigenvector with eigenvalue $\frac12$. Next, we shall show that $\vec\phi$ is the unique solution of the above system. In fact, let $\vec x=(x_1, \cdots, x_5)$ be a solution of the above system.

1. $x_4=0$. For otherwise, from the forth line in \eqref{eq:5G-one-nodal}, we obtain $x_3=x_4$. Again from the fifth line in \eqref{eq:5G-one-nodal} we have  $x_3=x_5$. And then
$\delta^\pm\ge 6>5=\frac{\vol(V)}{2}$, thus $\phi$ is not an eigenvector by Corollary \ref{cor:delta+<delta}. This is a contradiction.
Similarly we are able to show $x_5=0$.

2. $x_3=0$. If not, then either one of $\pm x_3>0$ holds. In this case, we show $x_1=0$. For otherwise, if $\pm x_1>0$, then again we have $\delta^\pm\ge 6$, it is impossible. In the remaining case, $\pm x_1<0$, it follows $z_{13}=\mp 1$, inserting into
the first equation in Eq.~\eqref{eq:5G-one-nodal}, we have $z_{12}=0$, i.e., $x_2=x_1$. Then $z_{23}=\mp 1$, inserting into the third line in \eqref{eq:5G-one-nodal}, we have
$$ z_{31}+z_{32}+z_{34}+z_{35}= \pm 4\notin 4\cdot \frac12\cdot [-1, 1].$$
Again, this is a contradiction. Similarly, we can show: $x_2=0$. However, $x_1=x_2=0$ can not be true. Because then we would have $z_{31}=z_{32}=z_{34}=z_{35}=\pm 1$, which contradicts with the third line in \eqref{eq:5G-one-nodal}.

In summary, $x_4=x_5=x_3=0$.

3. We show: $x_1=x_2\neq 0$.

First, we show: $x_1\neq 0$. For otherwise, $x_1=0$, in this case $\pm x_2>0$, it follows $z_{12}=\mp 1, \, z_{23}=\pm 1$, and then $z_{21}+z_{23}=\pm 2$, which contradicts with the second equation in \eqref{eq:5G-one-nodal}. Similarly, $x_2\neq 0$. Thus either $x_1x_2>0$ or $x_1 x_2<0$ holds. We shall prove: the case $x_1x_2<0$ is impossible. For otherwise, we may assume $x_1>0>x_2$, then $z_{12}=z_{13}=1$, which contradicts with  the first equation in \eqref{eq:5G-one-nodal}.

It remains the case $x_1x_2>0$, and then $x_1=x_2$. For otherwise, $z_{12}\neq 0$, but we have already known $z_{13}=1$. Again they contradicts with  the first equation in \eqref{eq:5G-one-nodal}. So we have completed the verification.
\end{example}

Naturally, we ask hereto:

{\sl Under what condition $S(\vec\phi_2)=1$ holds?}

From the definitions of the $k$-way Cheeger constant (see Eq.~\eqref{eq:hk}) and of the critical value (see Eq.~\eqref{eq:def-ck}), the following facts are obviously true:
\begin{equation}\label{eq:hk2}
h_1\le h_2 \le \cdots \le h_n=1,
\end{equation}
and
\begin{equation}\label{eq:h2lec2}
 h_2\le c_2,
\end{equation}
then we obtain
\begin{cor}\label{cor:c2=h2}
$$h_2=c_2.$$
\end{cor}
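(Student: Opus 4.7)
The proof is a one-line sandwich and I do not expect any real obstacle: both bounding inequalities have already been established (or are stated as obvious) in the immediately preceding paragraph. My plan is simply to invoke them and combine. Specifically, applying part (1) of Theorem \ref{th:ckhk} at $k=2$ gives $c_2\le h_2$, while the inequality $h_2\le c_2$ is precisely \eqref{eq:h2lec2}. Together these force $h_2=c_2$, which is the content of the corollary.

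For completeness I would sketch why the second inequality $h_2\le c_2$ is indeed immediate, since the paper only labels it ``obvious.'' By the identification $c_2=h(G)$ proved in \cite{Chang2015}, there exists $S^*\subset V$ with $\vol(S^*)\le \tfrac{1}{2}\vol(V)$ realizing the Cheeger constant, so $\psi(S^*)=h(G)=c_2$. Take the disjoint pair $(S_1,S_2)=(S^*,V\setminus S^*)\in F_2$; because $|\partial(V\setminus S^*)|=|\partial S^*|$ and $\vol(V\setminus S^*)\ge \vol(S^*)$, one has $\psi(V\setminus S^*)\le \psi(S^*)=c_2$, hence $\max_{i=1,2}\psi(S_i)=c_2$. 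Taking the minimum in the definition \eqref{eq:def-hk} of $h_2$ produces $h_2\le c_2$.

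There is no technical difficulty here: the corollary records the fact that the general lower bound $c_k\le h_k$ from Theorem \ref{th:ckhk}(1) happens to be saturated at $k=2$, where the Cheeger identification from \cite{Chang2015} supplies the matching upper bound. The only mild subtlety worth flagging in the exposition is the use of $\vol(V\setminus S^*)\ge\vol(S^*)$ to justify that the complementary piece does not degrade the ratio; once this is noted, the two inequalities close on each other and the corollary follows.
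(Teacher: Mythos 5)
Your proposal is correct and matches the paper's (largely implicit) argument: the paper likewise sandwiches $c_2\le h_2$ from Theorem \ref{th:ckhk}(1) against the inequality \eqref{eq:h2lec2}, which it treats as obvious from $c_2=h(G)$ and the definition \eqref{eq:def-hk}. Your explicit justification of $h_2\le c_2$ via the complementary pair $(S^*,V\setminus S^*)$ is exactly the intended reasoning, just spelled out.
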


\begin{theorem}
\label{th:simple-c2}
If $c_2$ is topologically simple, i.e., $\gamma(\mathcal{K}_{c_2})=1$, then $\forall\, \vec\phi\in K_{c_2}$, $S(\vec\phi)\le 2$, i.e., $\nu(c_2, G)\le 2$.
\end{theorem}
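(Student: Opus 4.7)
The plan is to invoke Theorem \ref{th:strong-nodal-domain} directly, specialized to the index $k=2$. The hypothesis $\gamma(\mathcal{K}_{c_2})=1$ says precisely that the topological multiplicity is $r:=\tm(c_2)=1$. Applied to any $\vec\phi\in\mathcal{K}_{c_2}$, Theorem \ref{th:strong-nodal-domain} yields $S(\vec\phi)\le k+r-1 = 2+1-1 = 2$, and taking the maximum over $\mathcal{K}_{c_2}$ gives $\nu(c_2,G)\le 2$, which is exactly the stated conclusion.

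The only delicate point is that Theorem \ref{th:strong-nodal-domain} must be applied to \emph{every} eigenvector in $\mathcal{K}_{c_2}$, not merely to a distinguished minimax eigenvector. Inspecting the proof of Theorem \ref{th:strong-nodal-domain} shows this is the case: the subspace $E_l$ built from the nodal domain decomposition depends only on the given $\vec\phi$, the computation bounding $I$ on $E_l\cap X$ uses only that $\vec\phi$ solves the eigensystem \eqref{eq:eigen-system} with eigenvalue $c_k$, and the conclusion $I|_{E_l\cap X}\le c_{k+r-1}$ thus holds for arbitrary $\vec\phi\in\mathcal{K}_{c_k}$. The strict inequality $c_{k+r-1}<c_{k+r}$ that powers the final contradiction is guaranteed by the defining relation between $\tm$ and the minimax sequence, namely Eqs.~\eqref{eq:c1} and \eqref{eq:c2}; in our setting this reduces to the gap $c_2<c_3$, which is exactly the content of the simplicity assumption $\gamma(\mathcal{K}_{c_2})=1$.

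Hence no new construction is needed and there is no serious obstacle. If a self-contained argument is preferred, one can rerun the scheme of Theorem \ref{th:strong-nodal-domain} verbatim with $(k,r)=(2,1)$: given $\vec\phi\in\mathcal{K}_{c_2}$ with $l:=S(\vec\phi)$ strong nodal domains $\{D_\gamma\}_{\gamma=1}^l$, form $E_l:=\Span\{\vec 1_{D_\gamma}\}$, verify $\gamma(E_l\cap X)=l$ and $\sup_{\vec x\in E_l\cap X}I(\vec x)\le c_2$ by the same $z_{ij}$-bookkeeping that produced \eqref{eq:Z+sum}-\eqref{eq:Z-sum}, deduce $c_l\le c_2$ from the definition \eqref{eq:def-ck}, and conclude $l\le 2$ from $c_2<c_3$. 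In either presentation the theorem is a direct corollary of the Courant-type nodal-domain bound already established.
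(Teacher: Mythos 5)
Your proof is correct, but it takes a genuinely different route from the paper's. You obtain the statement as an immediate corollary of the Courant-type bound in Theorem~\ref{th:strong-nodal-domain} with $(k,r)=(2,1)$, after (rightly) checking two delicate points: that the argument of Theorem~\ref{th:strong-nodal-domain} applies to an \emph{arbitrary} eigenvector in $\mathcal{K}_{c_2}$, not just a minimax one, and that the hypothesis $\gamma(\mathcal{K}_{c_2})=1$ forces $c_2<c_3$ via Eq.~\eqref{eq:c2}; note also that the indexing step $c_k=\cdots=c_{k+r-1}<c_{k+r}$ in that proof tacitly assumes $k$ is the first index at which the value $c_k$ occurs, which is unproblematic here since $c_1=0<c_2=h(G)$ for a connected graph. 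The paper instead argues by contradiction through the $k$-way Cheeger constants: if some $\vec\phi\in\mathcal{K}_{c_2}$ had $m:=S(\vec\phi)\ge 3$, then Theorem~\ref{th:ckhk} and the monotonicity \eqref{eq:hk2} give the sandwich $h_3\le h_m\le c_2\le c_3\le h_3$, hence $c_2=c_3$, and the Liusternik--Schnirelmann multiplicity theorem then yields $\gamma(\mathcal{K}_{c_2})\ge 2$, a contradiction. Both proofs ultimately rest on the same multiplicity fact (equality of consecutive $c_k$'s forces genus at least $2$); yours is shorter and reuses the genus/subspace machinery already built for Theorem~\ref{th:strong-nodal-domain}, while the paper's avoids re-examining that proof, is self-contained modulo Theorem~\ref{th:ckhk}, and as a by-product identifies $c_2=c_3=h_3$ in the contradictory case.
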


\begin{proof}
Suppose not, i.e., there exists $\vec\phi\in \mathcal{K}_{c_2}$ such that $m:=S(\vec\phi)\ge 3$. By Eq.~\eqref{eq:hk2} and Theorem \ref{th:ckhk}, we have
$$ h_3\le h_m\le c_2\le c_3\le h_3,$$
Thus $c_2=c_3$. According to the Liusternik-Schnirelmann multiplicity theorem (\cite{Chang2015}, Theorem 4.13), we have
$\gamma(\mathcal{K}_{c_2})\ge 2$. This is a contradiction.
\end{proof}

\begin{remark}\rm Below we make several further remarks.
\begin{enumerate}
\item Example \ref{exam:10G-weak-nodal} shows that the topologically simple condition in Theorem \ref{th:simple-c2} cannot be dropped.
In fact, for the graph there, we have $c_2=\frac17$, $S(\vec\phi_2)=3$ and thus $\nu(c_2,G)\geq 3$.
Meanwhile, we can see that $\gamma(\mathcal{K}_{c_2})\geq 2$. This can be directly deduced from the nondecreasing property of the genus as well as the following facts
$$
S^1 \cong \{\left.a_1\hat{\vec 1}_{\{5,7,8\}}+a_2 \hat{\vec 1}_{\{6,9,10\}}\,\right||a_1|+|a_2|=1\}\subset \mathcal{K}_{c_2} \,\text{ and }\, \gamma(S^1)=2.
$$


\item We see that
$$ \phi= \frac{1}{28}(\hat{\vec 1}_{\{5,7,8\}}-\hat{\vec 1}_{\{1,2,3,4\}}+\hat{\vec 1}_{\{6,9,10\}})$$
is an eigenvector with $c_2=\frac17$ and $W(\vec\phi)=S(\vec\phi)=3$ in Example \ref{exam:10G-weak-nodal}, which also implies that there exists $
\vec \phi\in \mathcal{K}_{c_2}$ such that the number of weak nodal domains is larger than $2$.
That is, the result for the standard Laplacian: $W(\vec\phi)=2$ due to Fiedler (see  Corollary \ref{cor:Fiedler}) in the linear spectral graph theory cannot be extended to the graph $1$-Laplacian.

\item Theorem \ref{th:simple-c2} only gives a sufficient condition to guarantee $S(\vec\phi)\le 2$ instead of $S(\vec\phi)=1$. The question for the latter is still open.
\end{enumerate}
\end{remark}

\section{A Counterexample}
\label{sec:example}

For distinct eigenvalues $\{\mu_i\}\subset [0,1]$ of the graph $1$-Laplacian, it has been proved that the gap between them is at least $\frac{4}{n^2(n-1)^2}$ and thus the total number of different eigenvalues is possibly on the order of $\mathcal{O}(n^4)$ \cite{ChangShaoZhang2015}. A subset of eigenvalues $\{c_k\}$ has been chosen from them by using the minimax principle and can be ordered by the topological multiplicity \cite{Chang2015}. Moreover, the cardinal number of the subset is at least $n$ if counting topological multiplicity. So a natural question was raised by Chang \cite{Chang2015}:

{\sl Is there any eigenvalue $\mu$, which is not in the sequence: $\{c_1, c_2, \cdots, c_n\}$?}

An amazing graph of order $6$ displayed in Fig.~\ref{fig:counterexample} will give us a positive answer, though the numbers of different eigenvalues of $P_n$, $C_n$, $K_n$, are respectively $\frac34 n$, $\frac12 n$, $\frac12 n$ (see Section \ref{sec:PnCnKn}), and thus all less than $n$.
Actually, by Theorem \ref{th:equivalent-binary}, we shall show it has 9 different eigenvalues.


\begin{example}\rm
\label{exam:counterexampleG6}
Let $G_6=(V,E)$ be the graph shown in Fig.~\ref{fig:counterexample}.
We shall prove:
\begin{equation}\label{eq:G6}
\sigma(G_6) = \left\{0, \frac25, \frac59, \frac35, \frac23, \frac57, \frac34, \frac79,1\right\}.
\end{equation}
\begin{figure}[htpb]
\centering
\begin{tikzpicture}[auto]
\node (1) at (0,1) {$1$};
\node (2) at (0,-1) {$2$};
\node (3) at (-2,0) {$3$};
\node (4) at (-4,0) {$4$};
\node (5) at (2,-1) {$5$};
\node (6) at (2, 1) {$6$};
\draw (1) to (2);
\draw (1) to (3);
\draw (1) to (4);
\draw (1) to (5);
\draw (1) to (6);
\draw (2) to (3);
\draw (2) to (4);
\draw (2) to (5);
\draw (3) to (4);
\draw (5) to (6);
\draw (1) circle(0.23);
\draw (2) circle(0.23);
\draw (3) circle(0.23);
\draw (4) circle(0.23);
\draw (5) circle(0.23);
\draw (6) circle(0.23);
\end{tikzpicture}
\caption{A graph with $6$ vertices and $9$ different eigenvalues}
\label{fig:counterexample}
\end{figure}
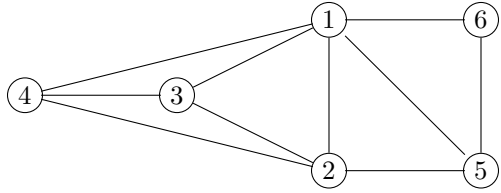
\end{example}

We need to search $(\mu,\vec\phi)$ by solving the system:
\begin{equation}\label{eq:n6sys}
\begin{cases}
z_{12}+z_{13}+z_{14}+z_{15}+z_{16}\in 5\mu \sgn(x_1),  \\
z_{21}+z_{23}+z_{24}+z_{25}\in 4\mu \sgn(x_2),  \\
z_{31}+z_{32}+z_{34}\in 3\mu \sgn(x_3),  \\
z_{41}+z_{42}+z_{43}\in 3\mu \sgn(x_4), \\
z_{51}+z_{52}+z_{56}\in 3\mu \sgn(x_5),  \\
z_{61}+z_{65}\in 2\mu \sgn(x_6).\\
\end{cases}
\end{equation}

Since for $\mu=1$, $\hat{\vec 1}_{\{i\}}$ with $i=1,2,\cdots, n$ is always an eigenvector, and for $\mu=0, \hat{\vec 1}_V$ is the unique eigenvector, we may assume $\mu\in (0,1)$.

According to Theorem \ref{th:equivalent-binary} and Corollary \ref{cor:nodal-eigenvector}, we may restrict ourselves to single nodal domain solutions, and then binary valued eigenvectors. More precisely, it is sufficient to check the nonzero binary vector $\hat{\vec 1}_A$ for some connected subset $A\subset V$. Again by Corollary \ref{cor:delta+<delta}, $\vol(A)\le \frac12 \vol(V)$ is a necessary condition. Therefore as a reduced subgraph, $A$ satisfies:
\begin{enumerate}[(C1)]
\item $\,\,\,\,\vol(A)\le \frac12 \vol(V)$.
\item $\,\,\,\,(A, E|_A)$ is connected.
\end{enumerate}

To the graph $G_6$, it is easily verified that $\vol(A)>10=\vol(V)/2$ for any subset $A$ with $|A|\ge 4$ (i.e. $A$ has at least four vertices).
It is reduced to study subsets $A$ with $|A|=2$ or  $3$.

First, we study the case $|A|=3$. We list all the possibilities of $A$ as follows, and then verify the system \eqref{eq:n6sys} case by case.

\begin{enumerate}
\item $A=\{1,2,3\},\{1,2,4\},\{1,2,5\},\{1,2,6\},\{1,3,4\},\{1,3,5\}, \{1,4,5\} $. For all these seven cases, we always have $\vol(A)>\frac12 \vol(V)$ and thus ignore them.\\
\item For $A=\{2,3,6\}, \{2,4,6\}, \{3,4,5\},\{3,4,6\},\{3,5,6\},\{4,5,6\}$, the reduced subgraphs are not connected, we ignore them.\\

\item $A=\{1,3,6\}$. We have $\mu=I(\hat{\vec 1}_A)=\frac35$,
and $z_{21}=z_{23}=z_{41}=z_{51}=z_{56}=z_{43}=-1$. Then after choosing
$z_{31}=z_{24}=z_{25}=z_{16}=-\frac15$,
\eqref{eq:n6sys} holds because
\begin{equation*}
\begin{cases}
1+\frac15+1+1-\frac15=3,  \\
-1-1-\frac15-\frac15\in [-\frac{12}{5},\frac{12}{5}],  \\
-\frac15+1+1=\frac95,  \\
-1+\frac15-1\in [-\frac95,\frac95], \\
-1+\frac15-1\in [-\frac95,\frac95],  \\
\frac15+1= \frac65. \\
\end{cases}
\end{equation*}
Since the positions of the vertices $3$ and $4$ are symmetric in the graph, the same is true for $A=\{1,4,6\}$.

\item $A=\{1,5,6\}$. We have $\mu=I(\hat{\vec 1}_A)=\frac25$,
and $z_{12}=z_{13}=z_{14}=z_{61}=z_{52}=1$. Then after choosing
$z_{15}=z_{34}=0$, $z_{23}=z_{24}=z_{56}=\frac15$, \eqref{eq:n6sys} holds because
\begin{equation*}
\begin{cases}
1+1+1+0-1= 2 ,  \\
-1+\frac15+\frac15-1\in [-\frac85,\frac85],  \\
-1-\frac15+0\in [-\frac65,\frac65],  \\
-1-\frac15+0\in [-\frac65,\frac65], \\
0+1+\frac15=\frac65,  \\
1-\frac15= \frac45. \\
\end{cases}
\end{equation*}

\item $A=\{2,3,4\}$. We have $\mu=I(\hat{\vec 1}_A)=\frac25$,
and $z_{12}=z_{13}=z_{14}=z_{61}=z_{52}=-1$. Then after choosing
$z_{15}=z_{34}=0$, $z_{23}=z_{24}=z_{56}=-\frac15$,
\eqref{eq:n6sys} holds because
\begin{equation*}
\begin{cases}
-1-1-1+0+1\in [-2,2],  \\
1-\frac15-\frac15+1= \frac85 ,  \\
1+\frac15+0= \frac65,  \\
1+\frac15+0= \frac65, \\
0-1-\frac15\in [-\frac65,\frac65],  \\
-1+\frac15\in [-\frac45,\frac45]. \\
\end{cases}
\end{equation*}

\item $A=\{2,3,5\}$. We have $\mu=I(\hat{\vec 1}_A)=\frac35$,
and $z_{21}=z_{24}=z_{31}=z_{51}=z_{56}=z_{34}=1$. Then after choosing
$z_{41}=z_{23}=z_{25}=z_{16}=\frac15$,
\eqref{eq:n6sys} holds because
\begin{equation*}
\begin{cases}
-1-1-\frac15-1+\frac15\in[-3,3],  \\
1+\frac15+1+\frac15= \frac{12}{5},  \\
1-\frac15+1= \frac95, \\
\frac15-1-1\in[-\frac95,\frac95],  \\
1-\frac15+1= \frac95,  \\
-\frac15-1\in [-\frac65,\frac65], \\
\end{cases}
\end{equation*}
By the same reason as in item 3, the same is true for $A=\{2,4,5\}$.

\item $A=\{2,5,6\}$. We have $\mu=I(\hat{\vec 1}_A)=\frac59$,
and $z_{21}=z_{51}=z_{61}=z_{23}=z_{24}=1$.  Then after choosing
$z_{34}=0$, $z_{52}=\frac79$ and $z_{13}=z_{14}=z_{65}=\frac19$,
\eqref{eq:n6sys} holds because
\begin{equation*}
\begin{cases}
-1+\frac19+\frac19-1-1\in [-\frac{25}{9},\frac{25}{9}],  \\
1+1+1-\frac79=\frac{20}{9},  \\
-\frac19-1+0\in [-\frac53,\frac53],  \\
-\frac19-1+0\in [-\frac53,\frac53], \\
1+\frac79-\frac19=\frac53,  \\
1+\frac19=  \frac{10}{9}, \\
\end{cases}
\end{equation*}

\end{enumerate}

In short, for all possible connected subsets $A$  with three vertices, there are three distinct $1$-Laplacian eigenvalues: $\mu=\frac35, \frac25, \frac59$ for $G_6$.

Next, we turn to investigate all possible subsets $A$  with two vertices. Before that, two lemmas below are needed.

\begin{lemma}
\label{Lem:minor} Let $G=(V,E)$ be a connected graph and $A\subset
V$. Assume that there exists $i_0\not\in A$ such that $j\in A$
whenever $j\sim i_0$. Then $\hat{\vec 1}_A$ is not an eigenvector
with eigenvalue $\mu<1$.
\end{lemma}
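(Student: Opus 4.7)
The plan is to read off the forced values of the subgradient entries $z_{i_0 j}$ at the vertex $i_0$ and show that the eigensystem equation at $i_0$ cannot be satisfied when $\mu<1$.

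First I would note that since $G$ is connected and $i_0\notin A$ with $A$ nonempty, $i_0$ has at least one neighbor, and by hypothesis every such neighbor $j$ lies in $A$. Setting $\vec\phi=\hat{\vec 1}_A$, this means $\phi_{i_0}=0$ while $\phi_j=1/\vol(A)>0$ for every $j\sim i_0$. Consequently $\phi_{i_0}-\phi_j=-1/\vol(A)<0$ strictly, so the set $\sgn(\phi_{i_0}-\phi_j)$ is the singleton $\{-1\}$, forcing $z_{i_0 j}=-1$ for every neighbor $j$ of $i_0$. There is no freedom in choosing these entries.

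Next I would write down the eigensystem equation \eqref{eq:eigen-system} at the vertex $i_0$. Since $\phi_{i_0}=0$, the right-hand side is $\mu d_{i_0}\sgn(0)=\mu d_{i_0}[-1,1]$, while the left-hand side equals $\sum_{j\sim i_0}z_{i_0 j}=-d_{i_0}$ by the previous step. Membership $-d_{i_0}\in[-\mu d_{i_0},\mu d_{i_0}]$ is equivalent to $\mu\geq 1$. Thus any $\mu<1$ produces an immediate contradiction, and the lemma follows.

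There is no real obstacle here; the argument is essentially one line once one observes that the strict inequality $\phi_{i_0}-\phi_j<0$ removes the multivaluedness of $\sgn$. The only thing to keep an eye on is the boundary value $\mu=1$ (allowed by the inequality), which is consistent with the already-known eigenvectors of the form $\hat{\vec 1}_{\{i\}}$ at eigenvalue $\mu=1$ that appear in the $K_n$ discussion and in the start of the analysis of $G_6$.
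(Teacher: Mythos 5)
Your proof is correct and follows exactly the paper's argument: all neighbors of $i_0$ lie in $A$, so each $z_{i_0 j}$ is forced to equal $-1$, giving $\sum_{j\sim i_0}z_{i_0 j}=-d_{i_0}$, which cannot lie in $\mu d_{i_0}[-1,1]$ when $\mu<1$. The additional remarks about connectivity guaranteeing a neighbor and about the boundary case $\mu=1$ are accurate but not essential.
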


\begin{proof}
Suppose the contrary, i.e., $\hat{\vec 1}_A$ is an eigenvector.
Note that if $j\sim i_0$, then $j\in A$ and $z_{i_0j}(\hat{\vec 1})=-1$, which implies
\[-d_{i_0}=\sum_{j\sim i_0}z_{i_0j}\in\mu d_{i_0}\sgn( (\hat{\vec 1}_A)_{i_0})=[-\mu d_{i_0},\mu d_{i_0}].\]
But $\mu<1$, this is a contradiction.
\end{proof}

\begin{lemma}
\label{Lem:important}
Let $G=(V,E)$ be a connected graph and $A\subset V$ be a subset consisting of two adjacent vertices. If
\begin{equation}\label{eq:cond1}
\frac{|E(\{i\},A)|}{d_i}\le \frac{|\partial A|}{\vol(A)},
\quad \forall\ i\in A^c,
\end{equation}
then $\hat{\vec 1}_A$ is an eigenvector.
\end{lemma}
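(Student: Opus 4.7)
My plan is to verify the eigensystem \eqref{eq:eigen-system} directly by exhibiting an explicit admissible choice of the $z_{ij}$'s. Write $A=\{a,b\}$ with $(a,b)\in E$. Since the single internal edge $(a,b)$ is not a boundary edge, $|\partial A|=d_a+d_b-2$ and $\vol(A)=d_a+d_b$, so the candidate eigenvalue is
\[
\mu := I(\hat{\vec 1}_A) = \frac{d_a+d_b-2}{d_a+d_b}.
\]

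The construction of $z$ proceeds case by case on the pair $(i,j)$ with $j\sim i$. If $i\in A$ and $j\in A^c$, the value $\phi_i-\phi_j=1/(d_a+d_b)>0$ forces $z_{ij}=1$. If $i,j\in A^c$, then $\phi_i=\phi_j=0$ and I am free to set $z_{ij}=0$. The only nontrivial choice is $z_{ab}$, which must lie in $[-1,1]$ and satisfy the equations at $a$ and $b$ simultaneously. The natural ansatz is $z_{ab}=(d_b-d_a)/(d_a+d_b)$, with $z_{ba}=-z_{ab}$; this clearly lies in $[-1,1]$.

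The verification then splits into two parts. For $i=a$ (and symmetrically $i=b$), $\sum_{j\sim a}z_{aj}=(d_a-1)+z_{ab}$, and a short algebraic check shows this equals $\mu d_a$, matching $\mu d_a\sgn(\phi_a)$; this is a routine computation that works precisely because of the choice of $z_{ab}$. For $i\in A^c$, all contributions from $A^c$-neighbors are $0$, and contributions from $A$-neighbors give $-|E(\{i\},A)|$. The required inclusion $\sum_{j\sim i}z_{ij}\in\mu d_i[-1,1]$ therefore reduces to $|E(\{i\},A)|\le \mu d_i$, which is exactly the hypothesis
\[
\frac{|E(\{i\},A)|}{d_i}\le \frac{|\partial A|}{\vol(A)}=\mu.
\]

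There is no genuine obstacle here: the only mild subtlety is recognizing that the hypothesis \eqref{eq:cond1} is introduced precisely to handle the vertices outside $A$, while the constraints at $a$ and $b$ are automatically resolvable by choosing $z_{ab}$ appropriately within $[-1,1]$. In other words, the condition $|d_b-d_a|\le d_a+d_b$ (automatic) takes care of the internal edge, and the condition \eqref{eq:cond1} takes care of the external degrees, together yielding all of \eqref{eq:eigen-system}.
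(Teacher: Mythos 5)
Your proof is correct and coincides with the paper's own argument: the same choice $z_{ab}=(d_b-d_a)/(d_a+d_b)$ for the internal edge, $z=1$ on edges leaving $A$, $z=0$ inside $A^c$, and the same observation that the hypothesis \eqref{eq:cond1} is exactly what makes the inclusion at vertices of $A^c$ hold. Nothing is missing.
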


\begin{proof}
Assume
$A=\{u_1, u_2\}$. Then
$$
\mu = I(\hat{\vec 1}_A)=\frac{d_{u_1}+d_{u_2}-2}{d_{u_1}+d_{u_2}}=\frac{|\partial A|}{\vol(A)},\;\;
z_{u_1,i}(\hat{\vec 1}_A)=z_{u_2,i}(\hat{\vec 1}_A)=1,
\;\; i\notin A.
$$
The assumption is equivalent to
\[
-\mu d_i\le -|E(\{i\},A)|\le \mu d_i,\,\,\,\forall\, i\notin A.
\]

After choosing $z_{u_1, u_2}(\hat{\vec 1}_A)=\frac{d_{u_2}-d_{u_1}}{d_{u_1}+d_{u_2}}$ and $z_{ij}=0$ for $i\sim j$ with $i,j\in A^c$,
it is not difficult to verify that
\begin{equation*}
\begin{cases}
\displaystyle
\sum_{j\sim u_1}z_{u_1,j}=\frac{d_{u_2}-d_{u_1}}{d_{u_1}+d_{u_2}}+d_{u_1}-1=d_{u_1}\left(1-\frac{2}{d_{u_1}+d_{u_2}}\right)\in d_{u_1}\mu \sgn((\hat{\vec 1}_A)_{u_1}),  \\
\displaystyle
\sum_{j\sim u_2}z_{u_2,j}=\frac{d_{u_1}-d_{u_2}}{d_{u_1}+d_{u_2}} +d_{u_2}-1=d_{u_2}\left(1-\frac{2}{d_{u_1}+d_{u_2}}\right)\in d_{u_2}\mu \sgn((\hat{\vec 1}_A)_{u_2}),  \\
\displaystyle
\sum_{j\sim i}z_{ij}=-|E(\{i\},A)|\in [-\mu d_i,\mu d_i]=\mu d_i \sgn((\hat{\vec 1}_A)_i),  \quad i\in A^c,\\
\end{cases}
\end{equation*}
which implies that $\hat{\vec 1}_A$ is an eigenvalue
with the eigenvalue $\mu = I(\hat{\vec 1}_A)$.
\end{proof}

We list all the possibilities of $A$  as follows.
\begin{enumerate}
\item $A=\{2,6\}, \{3,5\},\{3,6\},\{4,5\},\{4,6\}$ are disconnected subsets, they should be ignored.
\item $A=\{1,2\}$.
Since
$$\frac{|E(i,A)|}{d_i}\le \frac23 < \frac79 =
\frac{|\partial A|}{\vol(A)}=I(\hat{\vec 1}_A)$$
holds for any $i\in A^c$.
Then we have $\mu = \frac79$ is an eigenvalue by Lemma \ref{Lem:important}.
\item $A=\{1,3\},\{1,4\}$.
For these two cases, it can be easily verified that
$$\frac{|E(i,A)|}{d_i}\le \frac23 < \frac34 =
\frac{|\partial A|}{\vol(A)}=I(\hat{\vec 1}_A)
$$ holds for any $i\in A^c$.
Then we have $\mu = \frac34$ is an eigenvalue by Lemma \ref{Lem:important}.
\item $A=\{1,5\}$, Since vertex $6\not\in A$ and the adjacent vertices of $6$ are $1$ and $5$, by Lemma \ref{Lem:minor}, $\hat{\vec 1}_A$ is not an eigenvector.
\item $A=\{1,6\},\{2,3\},\{2,4\},\{2,5\}$.
For all these four cases, it can be easily verified that
$$\frac{|E(i,A)|}{d_i}\le \frac23 < \frac57 =
\frac{|\partial A|}{\vol(A)}=I(\hat{\vec 1}_A)
$$ holds for any $i\in A^c$.
Then we have $\mu = \frac57$ is an eigenvalue by Lemma \ref{Lem:important}.
\item $A=\{3,4\}$.
Since
$$\frac{|E(i,A)|}{d_i}\le \frac23=\frac{|\partial A|}{\vol(A)}=I(\hat{\vec 1}_A)$$
holds for any $i\in A^c$,
then we have $\mu=\frac23$ is an eigenvalue by Lemma \ref{Lem:important}.
\item $A=\{5,6\}$. Since
$$\frac{|E(i,A)|}{d_i}\le \frac25 < \frac35=\frac{|\partial A|}{\vol(A)}=I(\hat{\vec 1}_A)$$
holds for any $i\in A^c$, we have
$\mu=\frac35$ is an eigenvalue by Lemma \ref{Lem:important}.
\end{enumerate}

In short, for all possible subsets $A$ mentioned in Lemma \ref{Lem:important} with two vertices, we have found five different $1$-Laplacian eigenvalues: $\mu=\frac79,\frac34, \frac57,\frac23, \frac35$.

Finally, together with $0$ and $1$, we have $9$ different
$1$-Laplacian eigenvalues in total as follow.
\begin{enumerate}
\item $(0, \hat{\vec 1}_V )$,
\item $(\frac25, \hat{\vec 1}_{\{1,5,6\}})$, $(\frac25, \hat{\vec 1}_{\{2,3,4\}})$,
\item $(\frac59, \hat{\vec 1}_{\{2,5,6\}})$,
\item $(\frac35, \hat{\vec 1}_{\{5,6\}})$, $(\frac35, \hat{\vec 1}_{\{2,3,5\}})$, $(\frac35, \hat{\vec 1}_{\{2,4,5\}})$,
$(\frac35, \hat{\vec 1}_{\{1,3,6\}})$, $(\frac35, \hat{\vec 1}_{\{1,4,6\}})$
\item $(\frac23, \hat{\vec 1}_{\{3,4\}})$,
\item $(\frac57, \hat{\vec 1}_{\{1,6\}})$, $(\frac57, \hat{\vec 1}_{\{2,3\}})$, $(\frac57, \hat{\vec 1}_{\{2,4\}})$, $(\frac57, \hat{\vec 1}_{\{2,5\}})$,
\item $(\frac34, \hat{\vec 1}_{\{1,3\}} )$$,(\frac34, \hat{\vec 1}_{\{1,4\}})$,
\item $(\frac79, \hat{\vec 1}_{\{1,2\}})$,
\item $(1, \hat{\vec 1}_{\{1\}}) $
\end{enumerate}

\begin{remark}\rm
We should point out that the graph displayed in Fig.~\ref{fig:counterexample} is the minimal possible graph such that the number of different eigenvalues of the graph is larger than the order of the graph. More precisely, after studied the graph with the order less than or equal to $5$ in the sense of graph isomorphism, we find that the number of different eigenvalues of such small graphs is less than or equal to the order of the corresponding graph. This proof is shown in Appendix.
\end{remark}

The following result is a direct consequence of the intersection theorem:

\begin{pro}
\label{pro:leqck}
For $k=1,2,\cdots,n$, we have
$$c_k\ge \sup\limits_{\dim Y\ge n+1-k}\inf\limits_{\vec x\in Y\cap X} I(\vec x),$$
where $Y$ is a linear subspace of $\mathbb{R}^n$.
\end{pro}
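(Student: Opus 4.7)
The plan is to deduce this inequality directly from the classical intersection theorem for Krasnoselski genus, which states that if $A\subset \mathbb{R}^n\setminus\{0\}$ is closed and symmetric with $\gamma(A)\ge k$, and if $Y$ is a linear subspace of $\mathbb{R}^n$ with $\dim Y\ge n+1-k$ (equivalently, $\mathrm{codim}(Y)\le k-1$), then $A\cap Y\ne\emptyset$. Combining this with the definition \eqref{eq:def-ck} of $c_k$ will yield the bound in one short step.

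More concretely, I would fix $k$ and an arbitrary linear subspace $Y\subset\mathbb{R}^n$ with $\dim Y\ge n+1-k$, and then take an arbitrary closed symmetric subset $A\subset X$ with $\gamma(A)\ge k$ (admissible for the infimum defining $c_k$). Since $A$ avoids the origin (as $A\subset X$), the intersection theorem applies and yields a point $\vec x_0\in A\cap Y\subset Y\cap X$. Hence
\[
\max_{\vec x\in A}I(\vec x)\ge I(\vec x_0)\ge \inf_{\vec x\in Y\cap X}I(\vec x).
\]
Taking the infimum over all admissible $A$ gives $c_k\ge \inf_{\vec x\in Y\cap X}I(\vec x)$, and finally taking the supremum over all such subspaces $Y$ gives the desired inequality.

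The only real content is the invocation of the intersection theorem; the rest is a routine chain of inequalities. The main subtlety to verify, should one wish to be self-contained, is the genus-based intersection principle itself: if $\gamma(A)\ge k$ and $\dim Y\ge n+1-k$, then $A\cap Y\ne\emptyset$. This is a standard fact (a consequence of the monotonicity and subadditivity properties of $\gamma$, together with $\gamma(S^{k-1})=k$), and it is precisely the nonlinear substitute for the linear Courant-Fischer argument; since the proposition is stated as ``a direct consequence of the intersection theorem'', I would simply cite this fact rather than reproduce its proof. No further technical obstacle is expected.
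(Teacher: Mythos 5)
Your proposal is correct and follows essentially the same route as the paper: invoke the intersection theorem to get a point of $A\cap Y$ for every admissible $A$ and every subspace $Y$ with $\dim Y\ge n+1-k$, then chain the inequalities and pass to the infimum over $A$ and the supremum over $Y$. No gap; your write-up is in fact slightly more careful than the paper's (which contains typos such as $\dim Y\ge n-1+k$ and $A\cap X$ in place of $Y\cap X$).
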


\begin{proof}
Assume $A\subset X\backslash\{0\}$ is symmetric with $\gamma(A)\ge k$. According to the intersection theorem, for any linear subspace $Y\subset \mathbb{R}^n$, with dimension $n+1-k$, we have $Y\cap A\neq \emptyset$. Thus
\begin{align*}
\sup\limits_{\vec x\in A}I(\vec x)\ge\inf\limits_{\vec x\in Y\cap X}I(\vec x),
\end{align*}
which implies
\begin{align*}c_k&=\inf\limits_{\gamma(A)\ge k}\sup\limits_{\vec x\in A}I(\vec x)
\ge \sup\limits_{\dim(Y)\ge n-1+k}\inf\limits_{\vec x\in A\cap X}I(\vec x).
\end{align*}
\end{proof}

Turn to Example \ref{exam:counterexampleG6}, we can prove that the sequence $\{c_1,\cdots,c_6\}$ only takes $4$ different values,
the verification of which is listed below.


\begin{enumerate}
\item The definition of $c_1$ leads to
$$
0\leq c_1=\inf\limits_{\gamma(A)\ge 0}\sup\limits_{\vec x\in A}I(\vec x)\le \sup\limits_{\vec x\in \{-\hat{\vec1}_V,\hat{\vec1}_V\}}I(\vec x)=0\;\Rightarrow\; c_1=0.
$$
\item According to Corollary \ref{cor:c2=h2}, we have $c_2=h_2=h(G)=\frac25$.
\item Together with Theorem \ref{th:ckhk} and Proposition \ref{pro:leqck},
we have
\begin{align*}
1\ge c_6\ge c_5\ge c_4\ge\inf\limits_{\vec x\in X_3}I(\vec x)=\frac{11}{13}\thickapprox 0.8461>\frac79, \\
\frac57=h_3 \ge c_3\ge\inf\limits_{\vec x\in X_4}I(\vec x)=0.6062>\frac35,
\end{align*}
where $X_4$ is spanned by
\begin{align*}
\vec e_1&=(-0.9392,0.6206,0.8583,-0.8248,-0.9064,0.7177),\\
\vec
e_2 &=(-0.5438, 0.8715,-0.6804,0.3908,-0.2052,-0.2375),\\
\vec
e_3 &=( 0.0121,0.5862,0.2137,-0.5873,0.7455,-0.9229),\\
\vec e_4 &=(0.0690,0.8525,-0.3608,-0.1948,-0.8312,-0.2769),
\end{align*}
and $X_3$ is spanned by
\[\vec e_1=(0,1,0,-1,0,0),\;\vec e_2=(0,0,1,-1,0,0),\;\vec e_3=(0,0,0,0,0,1).
\]
Hence we arrive at $c_3\in\{\frac23,\frac57\}$ and $c_4=c_5=c_6=1$ because each $c_k\ (k=1,2,\cdots,6)$ belongs to $\sigma(G_6)$ given in Eq.~\eqref{eq:G6}.
\end{enumerate}


\section{Algebraic Multiplicity}
\label{sec:algebra}

So far, we have only defined the topological multiplicity $\tm(\mu)=\gamma(\mathcal{K}_\mu)$ of an eigenvalue $\mu$ for $\Delta_1$.
In the following, we are going to define a fundamental eigenvector system to an eigenvalue $\mu$, and the respective multiplicity.

\begin{defn}
A system of eigenvectors $\{\vec \phi_1, \cdots, \vec \phi_m\}$ with respect to the eigenvalue $\mu$, where $\vec \phi_j=\hat{\vec 1}_{D_j},\,\,j=1,2,\cdots, m$, is said to be fundamental, if for any eigenpair $(\mu, \vec\psi)$ there exists $(a_1, \cdots, a_m)\in \mathbb{R}^m$ such that
$$
\vec\psi =  \sum^m_{i=1}a_i \vec \phi_i, \,\,\,\sum^m_{i=1}|a_i|=1,
$$
and neither one can be expressed linearly by other eigenvectors in the system.
\end{defn}

Let $\Lambda$  be the collection of all fundamental eigenvector systems with respect to $\mu$.
It can be partially ordered by inclusion, i.e.,
$S_1\prec S_2$ for $S_1,S_2\in\Lambda$, if $S_1\subset S_2$. Thus to each well ordered subset of $\Lambda$, there is a maximal element, which is called a {\sl maximal fundamental eigenvector system}. Let $\Lambda_0$ be the set of all maximal fundamental eigenvector systems with respect to $\mu$.

\begin{defn}
The largest cardinal number of  maximal fundamental eigenvector systems with respect to the eigenvalue $\mu$ is called the algebraic multiplicity of $\mu$, denoted by $\am(\mu)$, i.e.,
$$
\am(\mu)=\max_{S\in \Lambda_0}\card(S),
$$
and corresponding maximal fundamental eigenvector system is denoted by $\mathcal{S}_\mu$, i.e.,
\[
\mathcal{S}_\mu = \mathop{\argmax}\limits_{S\in \Lambda_0}\card(S).
\]
\end{defn}


For any graph $G=(V, E)$,
it can be easily seen that by Theorems 5.1 and 5.4 in \cite{Chang2015}:
\begin{enumerate}
\item $\am(1)=n$, $\mathcal{S}_1 = \{\hat{\vec 1}_{\{1\}},\cdots, \hat{\vec 1}_{\{n\}}\}$ is the unique maximal fundamental eigenvector system;
\item $\am(0)=1$, $\mathcal{S}_0 = \hat{\vec1}_{V}$ is the unique maximal fundamental eigenvector system.
\end{enumerate}
Actually, for any graph $G$, we also have $\tm(0)=1$, but no general conclusion for $\tm(1)$. That is,
$\am(\mu)$ is usually different from $\tm(\mu)$ when $\mu \in (0,1]$. As an example, we have calculated the algebraic multiplicity of each $\mu \in \sigma(G_6)$ already shown in Example \ref{exam:counterexampleG6},
and the results are listed in Table \ref{tab:am}.

\begin{table}
\caption{\small Algebraic multiplicity and
corresponding maximal fundamental eigenvector system for the graph $G_6$ shown in Fig. \ref{fig:counterexample}.}
\begin{center}
\begin{tabular}{c|c|l}
\hline\hline
$\mu$ & $\am(\mu)$ & $\mathcal{S}_\mu$ \\
\hline
$0$     &  $1$     & $\{\hat{\vec 1}_{V}\}$  \\
\hline
$\frac25$ & $2$    & $\{\hat{\vec 1}_{\{1,5,6\}}, \hat{\vec 1}_{\{2,3,4\}}\}$ \\
\hline
$\frac59$ & $1$   & $\{\hat{\vec 1}_{\{2,5,6\}}\}$ \\
\hline
$\frac35$ & $4$    & $\{\hat{\vec 1}_{\{5,6\}}, \hat{\vec 1}_{\{2,4,5\}}, \hat{\vec 1}_{\{1,4,6\}}, \hat{\vec 1}_{\{2,3,5\}}\}$ \\
\hline
$\frac23$ & $1$ & $\{\hat{\vec 1}_{\{3,4\}}\}$ \\
\hline
$\frac57$ & $4$ & $\{\hat{\vec 1}_{\{1, 6\}}, \hat{\vec 1}_{\{2,3\}}, \hat{\vec 1}_{\{2,4\}}, \hat{\vec 1}_{\{2,5\}}\}$ \\
\hline
$\frac34$ & $2$ & $\{\hat{\vec 1}_{\{1,3\}}, \hat{\vec 1}_{\{1,4\}}\}$ \\
\hline
$\frac79$ & $1$ & $\{\hat{\vec 1}_{\{1,2\}}\}$ \\
\hline
$1$       & $6$ & $\{\hat{\vec 1}_{\{i\}}:i=1,2,3,4,5,6\}$ \\
\hline\hline
\end{tabular}
\end{center}
\label{tab:am}
\end{table}

The number $\am(\mu)$ is more interesting than $\tm(\mu)$ in counting multiple eigenvectors.
Indeed,

\begin{theorem}
\label{th:algebra-nodal}
If $(\mu, \vec \phi)$ is an eigenpair, and $\vec \phi$ has a nodal domain decomposition,
then $\am(\mu)\ge S(\vec \phi)$.
\end{theorem}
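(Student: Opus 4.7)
The plan is to leverage the fact that the nodal domains $D_1,\dots,D_k$ of $\vec\phi$, where $k := S(\vec\phi)$, are pairwise disjoint subsets of $V$; the corresponding indicators $\hat{\vec 1}_{D_1},\dots,\hat{\vec 1}_{D_k}$ therefore have pairwise disjoint supports and are automatically linearly independent in $\mathbb{R}^n$. Moreover, by Corollary \ref{cor:nodal-eigenvector} each of them is already a genuine eigenvector at the eigenvalue $\mu$. So the ``hard work'' (identifying $k$ honest eigenvectors) is already done by the nodal-domain decomposition theorem; what remains is purely linear-algebraic.

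Next, I would fix any maximal fundamental eigenvector system $\mathcal{S}_\mu = \{\hat{\vec 1}_{E_1},\dots,\hat{\vec 1}_{E_m}\}$ realizing $m = \am(\mu)$. By the defining property of a fundamental system---every $\mu$-eigenvector admits a linear representation in its elements---one can write, for each $\gamma\in\{1,\dots,k\}$,
\[ \hat{\vec 1}_{D_\gamma} = \sum_{i=1}^{m} a_{\gamma,i}\,\hat{\vec 1}_{E_i} \]
for suitable real coefficients $a_{\gamma,i}$. Assembling these rows yields a matrix $A=(a_{\gamma,i})\in\mathbb{R}^{k\times m}$.

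The final step is a pure rank count: linear independence of $\{\hat{\vec 1}_{D_\gamma}\}_{\gamma=1}^{k}$ forces the rows of $A$ to be linearly independent, so $\mathrm{rank}(A)=k$; combined with $\mathrm{rank}(A)\le m$, this yields $\am(\mu)=m\ge k=S(\vec\phi)$. I expect no serious obstacle---in particular, the $\ell^1$-normalization condition $\sum_i|a_{\gamma,i}|=1$ that appears in the definition of a fundamental system plays no role in the rank estimate, since only the existence of the linear representation (not its normalization) is needed; the only point worth a quick sanity check is that $\Lambda_0$ is nonempty so that $\mathcal{S}_\mu$ exists at all, but this is automatic in our finite-dimensional setting because any maximal linearly independent collection of binary eigenvectors of the form $\hat{\vec 1}_E$ yields such a system.
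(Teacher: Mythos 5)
Your argument is correct, but it runs in the opposite direction from the paper's. The paper works bottom-up: it takes the nodal-domain indicators $\{\hat{\vec 1}_{D^+_\alpha},\hat{\vec 1}_{D^-_\beta}\}$, observes that their disjoint supports make every $\ell^1$-normalized signed combination a $\mu$-eigenvector and make each indicator inexpressible through the others, declares this a fundamental system, extends it to a maximal one $T$, and concludes $\am(\mu)\ge\card(T)\ge r^++r^-=S(\vec\phi)$. You work top-down: you fix a maximal fundamental system realizing $\am(\mu)$, invoke its defining spanning property to express each $\hat{\vec 1}_{D_\gamma}$ in it, and finish with a dimension count using the linear independence coming from disjoint supports. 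Your route is shorter and cleanly separates the analytic input (Corollary \ref{cor:nodal-eigenvector}) from the linear algebra, and you are right that the normalization $\sum_i|a_i|=1$ is irrelevant to the rank estimate. What it costs you is that the whole weight now rests on the existence of a maximal fundamental system, i.e.\ on $\Lambda_0\neq\emptyset$: for \emph{some} finite family of binary vectors $\hat{\vec 1}_{E_i}$ to represent \emph{every} $\mu$-eigenvector, one needs $\Span(\mathcal{K}_\mu)\subset\Span(\widehat{\mathcal{K}}_\mu)$, which is precisely the nontrivial half of Theorem \ref{th:algebra-span-eigenvector}, proved only afterwards; your closing "sanity check" is exactly where this input hides. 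The paper's construction partially sidesteps this by exhibiting a candidate system directly (though its own phrase "one can easily find a maximal fundamental eigenvector system $T$ containing $S$" quietly relies on the same fact), and it has the side benefit of producing the explicit systems used later in Table \ref{tab:am}. So: accept your proof, but either cite the spanning property of $\widehat{\mathcal{K}}_\mu$ explicitly or reorder it after Theorem \ref{th:algebra-span-eigenvector}.
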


\begin{proof}
Assume that
$$
\vec \phi=(\sum^{r^+}_{\alpha=1}\sum_{i\in D^+_\alpha}-\sum^{r^-}_{\beta=1}\sum_{i\in D^-_\beta}) x_i \vec e_i
$$
is a nodal domain decomposition. We consider the system
$$
S=\{\hat{\vec 1}_{D^+_\alpha}, \hat{\vec 1}_{D^-_\beta}:\, 1\le \alpha\le r^+,\,1\le \beta\le r^- \},
$$
and
all vectors of the form
$$
\vec\phi(a_1, \cdots, a_{r^+}, b_1,\cdots b_{r^-})=\sum^{r^+}_{\alpha=1} a_\alpha \hat{\vec 1}_{\alpha}-\sum^{r^-}_{\beta=1} b_\beta \hat{\vec 1}_{\beta}\,\, \text{ with } \,\,
\sum^{r^+}_{\alpha=1} |a_\alpha| +\sum^{r^-}_{\beta=1} |b_\beta|  = 1
$$
are eigenvectors with eigenvalue $\mu, \,\forall\, (a_1, \cdots, a_{r^+}, b_1,\cdots b_{r^-})\in \mathbb{R}^{r^++r^-}_+$.
Since the supports of those eigenvectors in the system are disjoint, neither one of them can be expressed linearly by others. That is,
$S$ is a fundamental eigenvector system.

Starting from $S$,
one can easily find a maximal fundamental eigenvector system $T$ with respect to the eigenvalue $\mu$, which contains $S$ as a subsystem. Therefore, we conclude:
$$
\am(\mu)\ge r^++r^-=S (\vec \phi),
$$
and then complete the proof.
\end{proof}

We shall provide another characterization of the algebraic multiplicity $\am(\mu)$.

\begin{theorem}
\label{th:algebra-span-eigenvector}
$$
\am(\mu)=\dim \Span(\mathcal{K}_\mu).
$$
\end{theorem}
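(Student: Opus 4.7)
The plan is to prove the two inequalities $\am(\mu) \le \dim\Span(\mathcal{K}_\mu)$ and $\am(\mu) \ge \dim\Span(\mathcal{K}_\mu)$ separately, the first being essentially a linear-algebra observation and the second being the substantive step.

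For the upper bound, I would observe that any maximal fundamental eigenvector system $\mathcal{S}_\mu = \{\hat{\vec 1}_{D_1},\ldots,\hat{\vec 1}_{D_m}\}$ consists of linearly independent vectors, since the defining clause ``neither one can be expressed linearly by other eigenvectors in the system'' is precisely linear independence. All these vectors lie in $\mathcal{K}_\mu$, so $m = \card(\mathcal{S}_\mu) \le \dim\Span(\mathcal{K}_\mu)$; taking the supremum over $S \in \Lambda_0$ yields $\am(\mu) \le \dim\Span(\mathcal{K}_\mu)$.

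For the lower bound, the crucial observation is that $\Span(\mathcal{K}_\mu)$ is spanned by binary eigenvectors of the form $\hat{\vec 1}_D$. Indeed, by Corollary \ref{cor:nodal-eigenvector}, any eigenvector $\vec\psi \in \mathcal{K}_\mu$ with nodal domain decomposition
$$
\vec\psi = \sum_{\alpha=1}^{r^+} a_\alpha \hat{\vec 1}_{D_\alpha^+} - \sum_{\beta=1}^{r^-} b_\beta \hat{\vec 1}_{D_\beta^-}
$$
lies in the linear span of the binary eigenvectors $\hat{\vec 1}_{D_\alpha^+}, \hat{\vec 1}_{D_\beta^-}$, each of which itself belongs to $\mathcal{K}_\mu$ with the same eigenvalue $\mu$. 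Consequently
$$
\Span(\mathcal{K}_\mu) = \Span\big\{\hat{\vec 1}_D : \hat{\vec 1}_D \in \mathcal{K}_\mu\big\}.
$$
Setting $d = \dim\Span(\mathcal{K}_\mu)$, I would extract from this collection of binary eigenvectors a linearly independent subset $\{\hat{\vec 1}_{D_1},\ldots,\hat{\vec 1}_{D_d}\}$ forming a basis of $\Span(\mathcal{K}_\mu)$, and argue that it is a fundamental eigenvector system. Since it is already linearly independent, it is maximal by the upper bound above, so its existence forces $\am(\mu) \ge d$.

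The main obstacle is to verify the normalization condition $\sum_i |a_i| = 1$ in the definition of a fundamental eigenvector system for every eigenpair $(\mu, \vec\psi)$. When the chosen $D_i$'s have pairwise disjoint supports---for instance, when they are the nodal domains of a single eigenvector $\vec\phi^*$ achieving the maximum $S(\vec\phi^*) = \nu(\mu,G)$---this falls out immediately from Lemma \ref{lem:sumai=1}, exactly as in the proof of Theorem \ref{th:algebra-nodal}. In the general situation, where the basis indicators need not have disjoint supports, a more careful construction is required: one may start from the disjoint indicator basis provided by a most-decomposed eigenvector and then enlarge it one binary eigenvector at a time, at each step verifying via the eigenvector structure on $X$ that the normalization can still be realized. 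Once such a basis of size $d$ is produced, it automatically satisfies both requirements of Definition, and the equality $\am(\mu) = \dim\Span(\mathcal{K}_\mu)$ follows.
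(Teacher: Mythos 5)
Your overall strategy---reduce everything to binary eigenvectors and compare cardinalities with dimensions---is the same as the paper's, and your upper bound $\am(\mu)\le\dim \Span(\mathcal{K}_\mu)$ is fine. But the lower bound has a genuine gap at its very first step. You write a general eigenvector $\vec\psi\in\mathcal{K}_\mu$ as $\sum_\alpha a_\alpha\hat{\vec 1}_{D_\alpha^+}-\sum_\beta b_\beta\hat{\vec 1}_{D_\beta^-}$, i.e.\ as a vector that is \emph{constant on each of its nodal domains}. Nothing in the definitions guarantees this: the nodal domain decomposition \eqref{eq:nodal-decomposition} reads $\vec\psi=(\sum_\alpha\sum_{i\in D_\alpha^+}-\sum_\beta\sum_{i\in D_\beta^-})x_i\vec e_i$ with $x_i$ allowed to vary from vertex to vertex inside a single nodal domain, and Corollary \ref{cor:nodal-eigenvector} only says that each $\hat{\vec 1}_{D_\gamma}$ is itself an eigenvector---it does not place $\vec\psi$ in their linear span. (Theorem \ref{th:equivalent-binary} gives a \emph{path} in $\mathcal{K}_\mu$ from $\vec\psi$ to $\hat{\vec 1}_{D_\gamma}$, a topological statement, not a linear one.) So the identity $\Span(\mathcal{K}_\mu)=\Span\{\hat{\vec 1}_D:\hat{\vec 1}_D\in\mathcal{K}_\mu\}$, which carries the whole weight of the inequality $\dim\Span(\mathcal{K}_\mu)\le\am(\mu)$, is asserted rather than proved. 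The paper closes exactly this gap with a layer-cake argument: for each nodal domain $D^i$ of $\vec\psi$ it forms the superlevel sets $D^{i,1}\supsetneqq\cdots\supsetneqq D^{i,p_i}$ of $|\vec\psi|$ restricted to $D^i$, checks (using $\sgn(x^{i,s}_u-x^{i,s}_v)\supset\sgn(\psi_u-\psi_v)$) that each signed indicator $\pm\hat{\vec 1}_{D^{i,s}}$ again lies in $\mathcal{K}_\mu$, and exhibits the normalized restriction of $\vec\psi$ to $D^i$ as a convex combination of these binary eigenvectors. That is the missing idea, and without it the lower bound does not go through.

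A second, smaller issue: your treatment of the normalization $\sum_i|a_i|=1$ for a basis whose indicators need not have disjoint supports is only a sketch (``enlarge it one binary eigenvector at a time, at each step verifying \dots'' is not an argument). The paper sidesteps this entirely: it shows that the full set $\widehat{\mathcal{K}}_\mu$ of binary eigenvectors is a fundamental system---the convex-combination representation above delivers coefficients with $\sum_i|a_i|=1$ for free, since the nodal domains of $\vec\psi$ have pairwise disjoint supports and $\|\vec\psi\|=1$---and that conversely every fundamental system consists of binary vectors, whence $\am(\mu)=\dim\Span(\widehat{\mathcal{K}}_\mu)$ directly, without ever extracting or certifying a particular linearly independent basis. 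If you repair the first gap by the level-set decomposition, this second difficulty disappears along with it.
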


\begin{proof}
Denote by $\widehat{\mathcal{K}}_\mu$ the set of all binary vectors in $\mathcal{K}_\mu$. On one hand, due to Theorem \ref{th:equivalent-binary}, $\widehat{\mathcal{K}}_\mu$ is a fundamental system; on the other hand, again by Theorem \ref{th:equivalent-binary}, every eigenvector in a fundamental eigenvector system $S$ is binary, we have $S\subset \widehat{\mathcal{K}}_\mu$.
Thus
\begin{equation}\label{eq:am}
\am(\mu)=\max\limits_{S\in\Lambda_0}\card(S)=\max\limits_{S\in\Lambda_0}\dim\Span(S)=\dim \Span(\widehat{\mathcal{K}}_\mu).
\end{equation}

Since $\widehat{\mathcal{K}}_\mu\subset \mathcal{K}_\mu$, it remains to prove: $\Span(\mathcal{K}_\mu)\subset \Span(\widehat{\mathcal{K}}_\mu)$.

For $\vec \phi=(x_1, \cdots, x_n)\in \mathcal{K}_\mu$, let $D^1,D^2,\cdots,D^k$ be the nodal domains of $\vec \phi$.
It is easily seen that
$$\vec \phi^i:=\frac{\vec \phi|_{D^i}}{\|\vec \phi|_{D^i}\|}\in \mathcal{K}_\mu,\,\,\, i=1,2,\cdots,k.$$
Let
$$\vec \phi^i=\sum_{j\in D^i}x^i_j\vec e_j\text { and }  D^i(t)=\{j\in D^i:|x^i_j|\ge t\},\;0\le t\le \max\limits_{j\in D^i}|x_j^i|.$$
Denote by $p_i$ the cardinality of $\{|x^i_j|:j\in D^i\}$, and assume  $\{|x^i_j|:j\in D^i\}=\{t_1,t_2,\cdots,t_{p_i}\}$, where $t_1<t_2<\cdots<t_{p_i}$. Then we have
\[D^{i,1}\supsetneqq D^{i,2} \supsetneqq\cdots \supsetneqq D^{i,p_i}\]
where $D^{i,1}:=D^i(t_1)=D^i$, $D^{i,2}:=D^i(t_2)$, $\cdots$, $D^{i,p_i}:=D^i(t_{p_i})$.

Let $\vec x^{i,s}=\vec{\hat{\vec 1}}_{D^{i,s}}\cdot \sign(D^i)$, $s=1,2,\cdots,p_i$, where \[\sign(D^i)=\begin{cases}
1,& \text{ if }D^i\text{ is the positive nodal domain}\\
-1,& \text{ if }D^i\text{ is the negative nodal domain}
\end{cases}\]

With the aid of the obvious facts $\sgn(x^{i,s}_u-x^{i,s}_v)\supset \sgn( \phi^{i}_u- \phi^{i}_v)$ and $\sgn(x^{i,s}_v)\supset \sgn(x^{i}_v)$ for any $u,v\in V$, it can be verified without effort that $\vec x^{i,s}\in \mathcal{K}_\mu$ and so $\vec x^{i,s}\cdot \sign(D^i)\in \widehat{\mathcal{K}}_\mu$.

However, we have
\begin{align*}
\vec \phi^i&=(t_1\vec 1_{D^{i,1}\setminus D^{i,2}}+t_2\vec 1_{D^{i,2}\setminus D^{i,3}}+\cdots+t_{p_i}\vec 1_{D^{i,p_i}})\cdot \sign(D^i)
\\&=(t_1\vol(D^{i,1}\setminus D^{i,2})\vec{\hat{\vec 1}}_{D^{i,1}\setminus D^{i,2}}+t_2\vol(D^{i,2}\setminus D^{i,3})\vec{\hat{\vec 1}}_{D^{i,2}\setminus D^{i,3}}+\cdots+t_{p_i}\vol(D^{i,p_i})\vec{\hat{\vec 1}}_{D^{i,p_i}})\cdot \sign(D^i)
\\&=t_1 \vol(D^{i,1}) \vec x^{i,1}+(t_2-t_1)\vol(D^{i,2}) \vec x^{i,2}+\cdots+(t_{p_i}-t_{p_i-1})\vol(D^{i,p_i}) \vec x^{i,p_i},
\end{align*}
and
\begin{align*}
&t_1 \vol(D^{i,1})+(t_2-t_1)\vol(D^{i,2})+\cdots+(t_{p_i}-t_{p_i-1})\vol(D^{i,p_i})
\\=&t_1\vol(D^{i,1}\setminus D^{i,2})+t_2\vol(D^{i,2}\setminus D^{i,3})+\cdots+t_{p_i}\vol(D^{i,p_i})=1,
\end{align*}
i.e., $\vec \phi^i\in \conv(\vec x^{i,1},\vec x^{i,2},\cdots,\vec x^{i,p_i})$. This implies that
\begin{equation}\label{eq:trianglex}
\vec \phi\in \conv(\vec \phi^1,\vec \phi^2,\cdots,\vec \phi^k)\subset \conv(\vec x^{i,s}:i=1,2,\cdots,k,~~s=1,\cdots,p_i):=\triangle(\vec \phi).
\end{equation}

Thus, $\vec \phi\in\triangle(\vec \phi)\subset \Span(\widehat{\mathcal{K}}_\mu)$, and then $\mathcal{K}_\mu\subset \Span(\widehat{\mathcal{K}}_\mu)$, therefore, $\Span(\mathcal{K}_\mu)\subset \Span(\widehat{\mathcal{K}}_\mu)$. The proof is completed.
\end{proof}

\begin{remark}\rm We make some relevant remarks below.
\begin{enumerate}
\item In the above proof, $\triangle(\vec \phi)\subset \mathcal{K}_\mu\subset \Span(\widehat{\mathcal{K}}_\mu)$ holds for
any $\vec\phi\in\mathcal{K}_\mu$.

\item Let $C_{\textit{l}}=\{\vec \phi(a_1, \cdots, a_{r^+}, b_1,\cdots b_{r^-}),\,|\,(a_1, \cdots, a_{r^+}, b_1,\cdots b_{r^-})\in \mathbb{R}^{r^++r^-}_+ \} $ be the positive cone of the  $l=r^++r^-$ dimensional linear subspace $E_{l}=\Span\{\hat{\vec 1}_{D^+_\alpha}, \hat{\vec 1}_{D^-_\beta}:\, 1\le \alpha\le r^+,\,1\le \beta\le r^- \}$.
The set $C_{l}\cap X$ is contractible, $\gamma(C_l\cap X)=1$, i.e., its contribution to $\tm(\mu)$ is $1$, but $\am(\mu)\ge S (\vec \phi)=r^++r^-$.
\end{enumerate}
\end{remark}

\section{Optimal Cheeger Cut}
\label{sec:optimal}

There are many papers dealing with the Cheeger cut in literatures. In particular, its algorithms are mostly concerned. Let $G=(V, E)$, the so called Cheeger cut is the subset of vertices $S\subset V$ which satisfies
$$
h(G)=\min_S \frac{|E(S, S^c)|}{\min\{\vol(S), \vol(S^c)\}}.
$$
However, to all cuts $S$, the vector $\hat{\vec 1}_S$ is an eigenvector corresponding to the Cheeger constant $\mu_2=c_2=h(G)$ and vice versa \cite{Chang2015}.
The size of $\mathcal{K}_{c_2}$ may be very large due to Theorem \ref{th:equivalent-binary}. In fact, as we pointed out in Theorem \ref{th:equivalent-binary} as well as in Corollary \ref{cor:nodal-eigenvector}, if $\vec \phi\in \mathcal{K}_{c_2}$ and if $\{D^\pm_\gamma\}$ are all nodal domains of $\vec \phi$, then $\hat{\vec 1}_{D^\pm_\gamma}$ are eigenvectors, and thus all $\{D^\pm_\gamma\}$ are  Cheeger cuts. We ask:

{\sl
Facing so many Cheeger cuts, which one should we take?
}

The (normalized) characteristic function of the subset $\hat{\vec 1}_S$ is one to one correspondent to the subset $S$, therefore we may restrict ourselves to binary eigenvectors. Let us denote \[\B =\{\vec \phi: V\to \{0,c\},\,\,\,\left| \|\vec \phi\|=1\right.\}.\]

Since the realistic purpose in finding the Cheeger cut is to divide a graph into two parts as equally as possible, we propose the following definition.

\begin{defn}
Let $\vec \phi_0$ be a binary eigenvector with respect to $h(G)$ satisfying
$$\delta_0(\vec \phi_0)=\min\limits_{\vec \phi\in \mathcal{K}_{c_2}\cap \B }\{\delta_0(\vec \phi)\},$$
where $\delta_0(\vec \phi)=\vol(D^0)$. We call $S= D^0(\vec \phi_0)$ an optimal Cheeger cut.
\end{defn}

For a given Cheeger cut, according to Theorem 2.11 in \cite{ChangShaoZhang2015}, the part with larger volume must be connected.

To find the optimal Cheeger cut, we should first compute all the
binary eigenvectors of the second eigenvalue and then choose the one
with the minimal volume of $D^0$. Precisely, the optimization
problem can be formularized as
\begin{equation}
\label{eq:zeronorm}
\begin{array}{c}
\max \limits_{\vec x}\|\vec x\|_0
\\
\text{subject to: }
\vec x = \mathop{\argmin}\limits_{\vec y\in\pi}I(\vec y),
\end{array}
\end{equation}
where $\|\vec x\|_0:=\delta^+(\vec x)+\delta^-(\vec x)$ is the weighted zero norm of $\vec x$, i.e. the total degree of nonzero components.
It is easily seen that seeking the optimal Cheeger cut employs a kind of dense representation method in contrast to the popular sparse representation method for image processing.


The first graph for the optimal Cheeger cut is that used in Example 2.1 in \cite{ChangShaoZhang2015}. The Cheeger value of the graph is $h_2=\frac{1}{5}$.
Let $A_1=\{1,2,3\}$, $A_2=\{11,12,13\}$, $A=A_1\sqcup A_2$. It can be easily verified that,
there are only two kinds of Cheeger cuts, $(A,A^c)$ (i.e., $\vec\phi=\hat{\vec 1}_A$) and $(A_1,A_1^c)$
(i.e., $\vec\phi=\hat{\vec 1}_{A_1}$), in the sense of graph isomorphism.
Therefore, $(A,A^c)$ is an optimal Cheeger cut, but $(A_1,A_1^c)$ is not, because
$\vol(A^c)<\vol(A_1^c)$.

The second example below provides a more clear picture of the optimal Cheeger cut.

\begin{example}\rm
\label{exam:7-G}
Let
$$
G=(V, E), \,\,\,
V=\{1,2,3,4,5,6,7\},\,\,\,E=\{(12), (23), (34), (45), (36), (37)\}
$$
be the graph. It is easy to verify that $h_2=\frac13$. The eigenvectors are constructed by two eigencomponents shown in Fig.~\ref{fig:7-G}.
\begin{figure}[htpb]
\begin{center}
\begin{tikzpicture}[auto]
\node (1) at (0,0) {$\bullet$};
\node (2) at (1,0) {$\bullet$};
\node (3) at (2,0) {$0$};
\draw (1) to (2);
\draw (2) to (3);
\draw (1) circle(0.23);
\draw (2) circle(0.23);
\draw (3) circle(0.23);
\end{tikzpicture}
\end{center}

\begin{center}
\begin{tikzpicture}[auto]
\node (2) at (2,0) {$0$};
\node (3) at (3,0) {$\bullet$};
\node (4) at (4,0) {$0$};
\node (6) at (3,1) {$\bullet$};
\node (7) at (3,-1) {$\bullet$};
\draw (2) to (3);
\draw (3) to (4);
\draw (3) to (6);
\draw (3) to (7);
\draw (2) circle(0.23);
\draw (3) circle(0.23);
\draw (4) circle(0.23);
\draw (6) circle(0.23);
\draw (7) circle(0.23);
\end{tikzpicture}\end{center}
\caption{A $\frac13$-eigencomponent with two sockets used in Example \ref{exam:7-G}.}
\label{fig:7-G}
\end{figure}

By extensions, we obtain binary eigenvectors: $\hat{\vec 1}_{\{1,2\}},
\hat{\vec 1}_{\{4,5\}}, \hat{\vec 1}_{\{3,6,7\}}, \hat{\vec 1}_{\{1,2,4,5\}}$, which are shown in Fig.~\ref{fig:7-G-2}. Actually, there are only these four possible Cheeger cuts.

\begin{figure}[htpb]
\begin{center}
\begin{tikzpicture}[auto]
\node (1) at (1,0) {$\bullet$};
\node (2) at (2,0) {$\bullet$};
\node (3) at (3,0) {$0$};
\node (4) at (4,0) {$0$};
\node (5) at (5,0) {$0$};
\node (6) at (3,1) {$0$};
\node (7) at (3,-1) {$0$};
\node (1-) at (1.3,0.3) {$1$};
\node (2-) at (2.3,0.3) {$2$};
\node (3-) at (3.3,0.3) {$3$};
\node (4-) at (4.3,0.3) {$4$};
\node (5-) at (5.3,0.3) {$5$};
\node (6-) at (3.3,1.3) {$6$};
\node (7-) at (3.3,-1.3) {$7$};
\draw (1) to (2);
\draw (2) to (3);
\draw (3) to (4);
\draw (4) to (5);
\draw (3) to (6);
\draw (3) to (7);
\draw (1) circle(0.23);
\draw (2) circle(0.23);
\draw (3) circle(0.23);
\draw (4) circle(0.23);
\draw (5) circle(0.23);
\draw (6) circle(0.23);
\draw (6) circle(0.23);
\draw (7) circle(0.23);
\end{tikzpicture}
\hspace{1cm}
\begin{tikzpicture}[auto]
\node (1) at (1,0) {$0$};
\node (2) at (2,0) {$0$};
\node (3) at (3,0) {$0$};
\node (4) at (4,0) {$\bullet$};
\node (5) at (5,0) {$\bullet$};
\node (6) at (3,1) {$0$};
\node (7) at (3,-1) {$0$};
\node (1-) at (1.3,0.3) {$1$};
\node (2-) at (2.3,0.3) {$2$};
\node (3-) at (3.3,0.3) {$3$};
\node (4-) at (4.3,0.3) {$4$};
\node (5-) at (5.3,0.3) {$5$};
\node (6-) at (3.3,1.3) {$6$};
\node (7-) at (3.3,-1.3) {$7$};
\draw (1) to (2);
\draw (2) to (3);
\draw (3) to (4);
\draw (4) to (5);
\draw (3) to (6);
\draw (3) to (7);
\draw (1) circle(0.23);
\draw (2) circle(0.23);
\draw (3) circle(0.23);
\draw (4) circle(0.23);
\draw (5) circle(0.23);
\draw (6) circle(0.23);
\draw (6) circle(0.23);
\draw (7) circle(0.23);
\end{tikzpicture}
\end{center}

\begin{center}
\begin{tikzpicture}[auto]
\node (1) at (1,0) {$0$};
\node (2) at (2,0) {$0$};
\node (3) at (3,0) {$\bullet$};
\node (4) at (4,0) {$0$};
\node (5) at (5,0) {$0$};
\node (6) at (3,1) {$\bullet$};
\node (7) at (3,-1) {$\bullet$};
\node (1-) at (1.3,0.3) {$1$};
\node (2-) at (2.3,0.3) {$2$};
\node (3-) at (3.3,0.3) {$3$};
\node (4-) at (4.3,0.3) {$4$};
\node (5-) at (5.3,0.3) {$5$};
\node (6-) at (3.3,1.3) {$6$};
\node (7-) at (3.3,-1.3) {$7$};
\draw (1) to (2);
\draw (2) to (3);
\draw (3) to (4);
\draw (4) to (5);
\draw (3) to (6);
\draw (3) to (7);
\draw (1) circle(0.23);
\draw (2) circle(0.23);
\draw (3) circle(0.23);
\draw (4) circle(0.23);
\draw (5) circle(0.23);
\draw (6) circle(0.23);
\draw (6) circle(0.23);
\draw (7) circle(0.23);
\end{tikzpicture}
\hspace{1cm}
\begin{tikzpicture}[auto]
\node (1) at (1,0) {$\bullet$};
\node (2) at (2,0) {$\bullet$};
\node (3) at (3,0) {$0$};
\node (4) at (4,0) {$\bullet$};
\node (5) at (5,0) {$\bullet$};
\node (6) at (3,1) {$0$};
\node (7) at (3,-1) {$0$};
\node (1-) at (1.3,0.3) {$1$};
\node (2-) at (2.3,0.3) {$2$};
\node (3-) at (3.3,0.3) {$3$};
\node (4-) at (4.3,0.3) {$4$};
\node (5-) at (5.3,0.3) {$5$};
\node (6-) at (3.3,1.3) {$6$};
\node (7-) at (3.3,-1.3) {$7$};
\draw (1) to (2);
\draw (2) to (3);
\draw (3) to (4);
\draw (4) to (5);
\draw (3) to (6);
\draw (3) to (7);
\draw (1) circle(0.23);
\draw (2) circle(0.23);
\draw (3) circle(0.23);
\draw (4) circle(0.23);
\draw (5) circle(0.23);
\draw (6) circle(0.23);
\draw (6) circle(0.23);
\draw (7) circle(0.23);
\end{tikzpicture}
\end{center}
\caption{Four $\frac13$-eigencomponents used in Example \ref{exam:7-G}.}
\label{fig:7-G-2}
\end{figure}
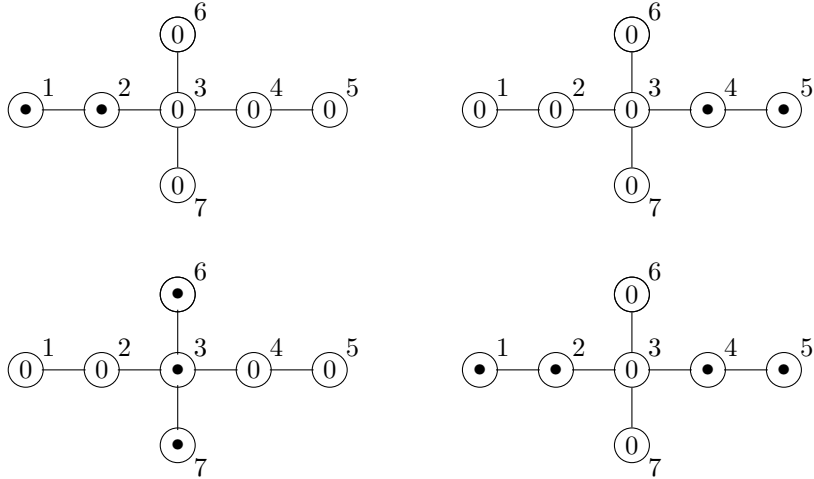

In the first two cases, $\delta_0=9$, while in the next two cases, $\delta_0=6$. Hence, the optimal Cheeger cut are $\{1,2, 4, 5\}$ and $\{3,6,7\}$.

\end{example}

\section{Conclusion}


In this paper, we systematically studied the theory as well as applications of nodal domains of eigenvectors for the graph $1$-Laplacian. All-around detailed comparison between the standard Laplacian and $1$-Laplacian was presented. The main results are summarized below.
\begin{enumerate}
\item We found that the eigenvectors for graph $1$-Laplacian admit the localization property (see Theorem \ref{th:equivalent-binary}), that is, the restrict of an eigenvector to one of its nodal domain is again an eigenvector with the same eigenvalue.
This is a significant difference from the standard graph Laplacian. It also allows us to glue some simple modules into a complex eigenvector by some special techniques, with which the eigenvectors of the most possible nodal domains for path graphs and cycle graphs are obtained (see Theorems \ref{th:nodal-Pn} and \ref{th:nodal-Cn}). These results can be seen as counterparts of Sturm-Liouville oscillation theorem in ODE,  of the oscillatory eigenfunctions for $1$-Laplacian on intervals and circles \cite{Chang2009}, as well as of the oscillatory eigenfunctions for standard Laplacian on path graphs \cite{GantmacherKrein2002}.

\item We extended the Courant nodal domain theorem to the $1$-Laplacian eigenvalues defined by minimax principles (see Theorem \ref{th:strong-nodal-domain}). Actually, the Courant nodal domain theorem
still holds for strong nodal domains of the $1$-Laplacian eigenvectors, but not for weak ones (see Example \ref{exam:10G-weak-nodal}). It should be noted that the lower bound for the number of strong (weak) nodal domains is $1$ by Theorem \ref{th:equivalent-binary}. Further, an inequality connecting the $k$-way Cheeger constant with the critical values is obtained (see Theorem \ref{th:ckhk}).

\item  The number of eigenvalues can be much larger than the number of eigenvalues defined by the minimax principle (see Example \ref{exam:counterexampleG6}), which is another significant difference from the standard Laplacian.


\item The algebraic multiplicity (introduced in Section \ref{sec:algebra}) of a  eigenvalue,
showing big difference from the topological multiplicity, equals to the dimension of the linear space spanned by the set of eigenvectors corresponding to the prescribed eigenvalue,
and no less than the number of nodal domains of the corresponding eigenvector. Moreover, the calculation of the algebraic multiplicity is much easier than that of the topological multiplicity.

\end{enumerate}

\section*{Acknowledgement}
This research was supported by grants from the National Natural Science Foundation of China (Nos.~11371038, 11421101, 11471025, 61121002, 91330110).

\section*{Appendix}
\label{sec:Appendix1}

Without loss of generality, we may assume that $G$ is connected, because otherwise we can replace $G$ by its connected components. Denote by $\#_n$ the largest possible number of different eigenvalues of a graph of order $n$. It can be easily checked that $\#_1=1$, $\#_2=2$ and $\#_3=2$.
In order to prove the minimum order $n$ of graphs which satisfies $\#_n>n$ is $6$, we only need to verify the cases of $|V|\in\{4,5\}$.

\subsection*{$|V|=4$}\label{subsection:|V|=4}

For a connected graph with $4$ vertices, Fig.~\ref{fig:appendix-order4} shows all the cases and related $1$-Laplacian eigenvalues in the sense of graph isomorphism (isomorphic graphs have the same eigenvalue set). That is, $\#_4 = 3$.
\begin{figure}[htpb]
\centering
\begin{tabular}{cccccc}
\hline
\includegraphics[height=2.3cm]{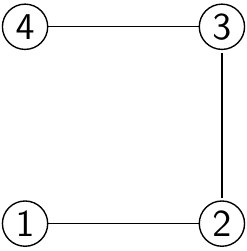}
&
\includegraphics[height=2.3cm]{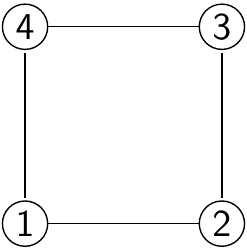}
&
\includegraphics[height=2.3cm]{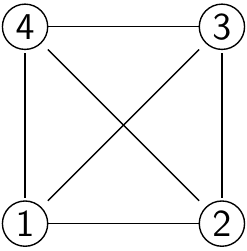}
&
\includegraphics[height=2.3cm]{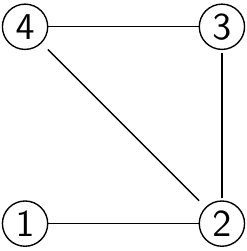}
&
\includegraphics[height=2.3cm]{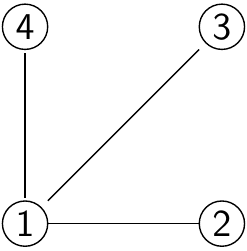}
&
\includegraphics[height=2.3cm]{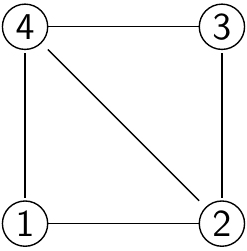}
\\ \hline
 $0<\frac13<1$& $0<\frac12<1$& $0<\frac23<1$& $0<\frac23<1$&$0<1$& $0<\frac35<1$
\\ \hline
\end{tabular}
\caption{Connected graphs with $4$ vertices and their eigenvalues used in Appendix for the case $|V|=4$.}
\label{fig:appendix-order4}
\end{figure}

\subsection*{$|V|=5$}

Now we concentrate on the connected graphs on the vertex set $V=\{1,2,3,4,5\}$. According to Corollary \ref{cor:delta+<delta}, we need to only check
all possible subsets $A\subset V$ such that $(A,E|_A)$ is a connected subgraph of $G$ and $\vol(A)\le \frac12\vol(V)$. Once $\hat{\vec 1}_A$ is an eigenvector, the corresponding eigenvalue is $\mu=I(\hat{\vec 1}_A)$. We just consider the cases of $|A|\in\{2,3,4\}$ because the remaining cases of $|A|=1,5$ are trivial (the eigenvalue is $0$ and $1$ for $|A|=1$ and $5$, respectively).
For the case of $|A|\ge 3$, i.e. $|A^c|\le 2$, it can be verified that  $|\partial A|=|\partial A^c|$ and
\[
\vol(A)\ge 2\times (3-1)+|\partial A|>2\times (2-1)+|\partial A^c|\ge \vol(A^c)
\]
implying $\vol(A)>\frac12\vol(V)$,
and thus we ignore this case.
Hence, it remains only the case of $|A|= 2$.

Without loss of generality, we may suppose $A=\{1,2\}$. By Lemma \ref{Lem:minor}, $(A^c,E|_{A^c})$ is connected (otherwise there exists $i_0\in A^c$ such that $j\in A$ whenever $j\sim i_0$ and thus $\hat{\vec 1}_A$ can not be an eigenvector), then $\vol(A)=d_1+d_2\in \{3,4,5,6,7,8\}$ and $1\leq d_i\leq 4$ with $i=1,2$.

\begin{enumerate}[(Q1)]
\item $d_1+d_2=3$, \ie $(d_1,d_2)=(1,2),(2,1)$. All possible cases and related $1$-Laplacian eigenvalues in the sense of graph isomorphism are shown in Fig.~\ref{fig:appendix-order5-1}.
We can see there that $\#_5 = 4$ in this case.
\begin{figure}[htpb]
\centering
\begin{tabular}{ccc}
\hline
\includegraphics[height=2.3cm]{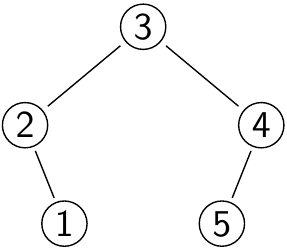}
&
\includegraphics[height=2.3cm]{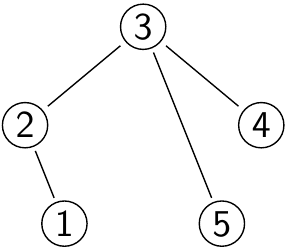}
&
\includegraphics[height=2.3cm]{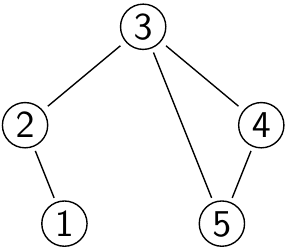}
\\ \hline
$0<\frac13<1$&$0<\frac13<1$ & $0<\frac13<\frac12<1$
\\ \hline
\end{tabular}
\caption{Graphs and their spectrums used in Case (Q1).}
\label{fig:appendix-order5-1}
\end{figure}

\item $d_1+d_2\in\{7,8\}$, \ie $(d_1,d_2)=(3,4),(4,3),(4,4)$.
All possible cases and related $1$-Laplacian eigenvalues in the sense of graph isomorphism are shown in Fig.~\ref{fig:appendix-order5-2}.
We can see there that $\#_5 = 4$ in this case.
\begin{figure}[htpb]
\centering
\begin{tabular}{cccc}
\hline
\includegraphics[height=2.3cm]{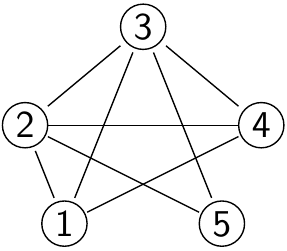}
&
\includegraphics[height=2.3cm]{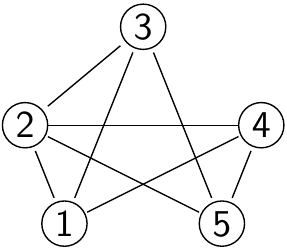}
&
\includegraphics[height=2.3cm]{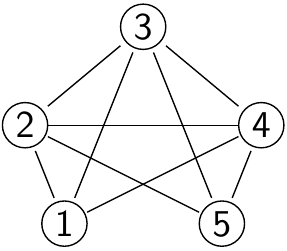}
&
\includegraphics[height=2.3cm]{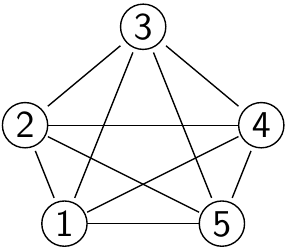}
\\ \hline
 $0<\frac23<\frac57<1$& $0<\frac23<\frac57<1$ & $0<\frac57<\frac34<1$& $0<\frac34<1$
\\ \hline
\end{tabular}
\caption{Graphs and their spectrums used in Case (Q2).}
\label{fig:appendix-order5-2}
\end{figure}

\item $d_1+d_2\in\{4,5,6\}$. We have
$$
I(\hat{\vec 1}_A) = \frac{|\partial A|}{\vol(A)}=\frac{d_1-1+d_2-1}{d_1+d_2}=1-\frac{2}{\vol(A)}\in\left\{1-\frac24,1-\frac25,1-\frac26\right\}
=\left\{\frac12,\frac35,\frac23\right\}.
$$
In consequence, all the $1$-Laplacian eigenvalues in this case must be contained in $\{0,\frac12,\frac35,\frac23,1\}$. That is, $\#_5\leq 5$.
\end{enumerate}

Hence, we have
\begin{cor}
Let $G=(V,E)$ be a graph of order $n\leq 5$. Then the number of $1$-Laplacian eigenvalues is at most $n$.
\end{cor}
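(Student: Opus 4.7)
The plan is to reduce to the case of connected graphs and argue by case analysis on $n \in \{1,2,3,4,5\}$. For disconnected $G$, the spectrum of $\Delta_1$ is the union of the spectra of the connected components, so if the bound holds for every connected graph of order at most $5$, summing componentwise handles disconnected $G$ as well. For $n\in\{1,2,3\}$ the claim is immediate by inspection, and for $n=4$ I would enumerate the six isomorphism classes of connected graphs on four vertices (cf.\ Fig.~\ref{fig:appendix-order4}) and read off their spectra, each containing at most three distinct values.

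The substantive case is $n=5$, and here the key leverage comes from Theorem~\ref{th:equivalent-binary} combined with Corollary~\ref{cor:delta+<delta}: every eigenvalue of $\Delta_1(G)$ is attained by some binary eigenvector $\hat{\vec 1}_A$ where $A\subset V$ induces a connected subgraph and $\vol(A)\le \frac12\vol(V)$. The first reduction is to eliminate $|A|\ge 3$: by Lemma~\ref{Lem:minor}, $A^c$ must itself be connected (otherwise $\hat{\vec 1}_A$ cannot be an eigenvector for $\mu<1$), and a direct edge count on the one- or two-vertex complement gives $\vol(A)>\vol(A^c)$, contradicting the volume constraint. The trivial cases $|A|=1$ and $|A|=5$ contribute only $\mu=1$ and $\mu=0$, so only $|A|=2$ remains.

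For $|A|=2$ the eigenvalue, when realized, equals $I(\hat{\vec 1}_A)=1-2/\vol(A)$, depending solely on $\vol(A)\in\{3,4,5,6,7,8\}$. For the extremal volumes $\vol(A)\in\{3,7,8\}$ I would tabulate the short list of connected graphs (up to isomorphism) admitting such an $A$, as in Figs.~\ref{fig:appendix-order5-1} and \ref{fig:appendix-order5-2}, and read off at most four distinct eigenvalues per graph. For the intermediate volumes $\vol(A)\in\{4,5,6\}$ the contributed value lies in $\{\frac12,\frac35,\frac23\}$, so even if every such $A$ were realizable, the full spectrum would be confined to the five-element set $\{0,\frac12,\frac35,\frac23,1\}$. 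The main obstacle I anticipate is completeness of the small-case enumeration in the extremal subcases: one must verify that every connected graph on five vertices admitting an admissible pair $A$ of the given volume is represented among the listed isomorphism classes, and that no extra eigenvalues emerge beyond what the volume reduction already excludes. Once this bookkeeping is finished, assembling the three subcases yields $\#_5\le 5$, completing the proof.
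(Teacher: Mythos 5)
Your proposal follows the paper's Appendix argument essentially verbatim: reduce to connected graphs, settle $n\le 4$ by enumerating isomorphism classes, and for $n=5$ use Theorem~\ref{th:equivalent-binary} with Corollary~\ref{cor:delta+<delta} to reduce to binary eigenvectors $\hat{\vec 1}_A$, discard $|A|\ge 3$ by the volume count, and split the remaining $|A|=2$ case on $\vol(A)$ exactly as in the paper's cases (Q1)--(Q3). The approach and every key reduction coincide with the paper's, so nothing further is needed beyond the finite enumeration you already flag.
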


In fact, we are able to further prove that $\#_5=5$ can not be true even for the cases of $d_1+d_2\in\{4,5,6\}$.
Otherwise if $\#_5=5$ holds, according to the discussions of Case (X3), there exists a graph $G=(V,E)$ with $V=\{1,2,3,4,5\}$ such that
\begin{equation}\label{eq:D}
D:=\{d_i+d_j: \{i,j\}\in E\}=\{4,5,6\}.
\end{equation}
Then, we may assume that $\vol(\{1,2\})=4$, which could be reduced to
$(d_1,d_2)=(1,3)$, $(2,2)$, $(3,1)$.

\begin{enumerate}
\item[(Q4)] $(d_1,d_2)=(1,3),(3,1)$.
All possible graphs in the sense of graph isomorphism and related sets $D$ (the second row) as well as the $1$-Laplacian eigenvalues (the third row) are shown in Fig.~\ref{fig:appendix-order5-3}. We can see there that only first two
graphs satisfy the constraint \eqref{eq:D}, but
the number of different eigenvalues is $4$ and $3$, respectively. This is a contradiction.
\begin{figure}[htpb]
\centering
\begin{tabular}{cccc}
\hline
\includegraphics[height=2.3cm]{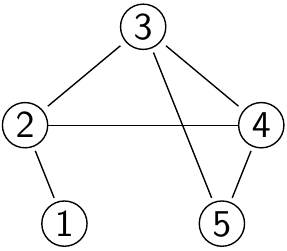}
&
\includegraphics[height=2.3cm]{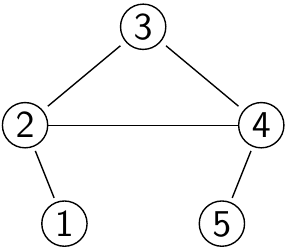}
&
\includegraphics[height=2.3cm]{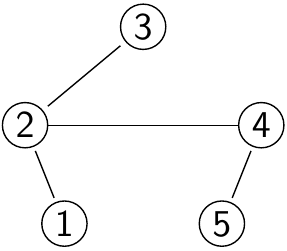}
&
\includegraphics[height=2.3cm]{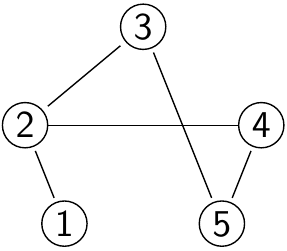}
\\ \hline
$\{4,5,6\}$ &
$\{4,5,6\}$ & $\{3,4,5\}$ & $\{4,5\}$  \\
\hline
 $0<\frac12<\frac35<1$ &
 $0<\frac12<1$ & $0<\frac13<1$&  $0<\frac12<1$
\\ \hline
\end{tabular}
\caption{Graphs and their spectrums used in Case (Q4).}
\label{fig:appendix-order5-3}
\end{figure}

\item[(Q5)] $(d_1,d_2)=(2,2)$. All possible graphs in the sense of graph isomorphism and related sets $D$ (the second and fourth rows) as well as the $1$-Laplacian eigenvalues (the third and sixth rows) are shown Fig.~\ref{fig:appendix-order5-4}. We can see there that only first two
graphs satisfy the constraint \eqref{eq:D}, but
the number of different eigenvalues is $4$ and $3$, respectively. This is a contradiction.
\begin{figure}[htpb]
\centering
\begin{tabular}{cccc}
\hline
\includegraphics[height=2.3cm]{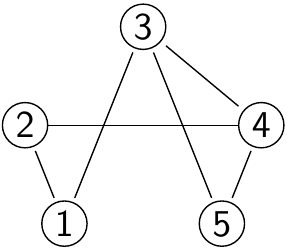}
&
\includegraphics[height=2.3cm]{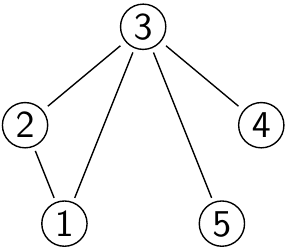}
&
\includegraphics[height=2.3cm]{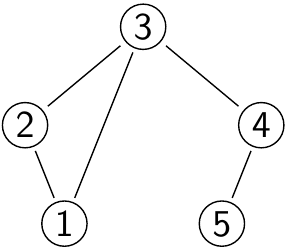}
&
\includegraphics[height=2.3cm]{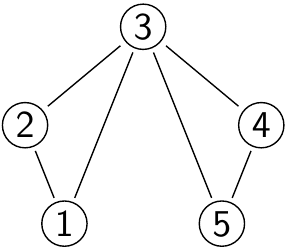}
\\
\hline
$\{4,5,6\}$
&
$\{4,5,6\}$ & $\{3,4,5\}$ & $\{4,6\}$
\\
\hline
$0<\frac12<\frac35<1$
& $0<\frac12<1$&$0<\frac13<\frac12<1$ & $0<\frac12<1$
\\ \hline
\includegraphics[height=2.3cm]{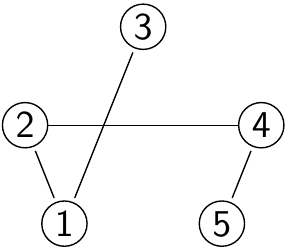}
&
\includegraphics[height=2.3cm]{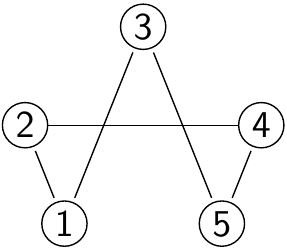}
&
\includegraphics[height=2.3cm]{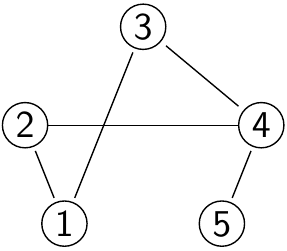}
&
\\
\hline
$\{3,4\}$ & $\{4\}$ & $\{4,5\}$ &
\\
\hline
$0<\frac13<1$& $0<\frac12<1$ & $0<\frac12<1$ &
\\
\hline
\end{tabular}
\caption{Graphs and their spectrums used in Case (Q5).}
\label{fig:appendix-order5-4}
\end{figure}
\end{enumerate}

Combining all the results of Cases (Q1)-(Q5), we have $\#_5=4$.



\end{document}